\title{The geometry of the unipotent component of the moduli space of Weil-Deligne representations}
\author{Daniel Funck}
\date{Dec 2024}
\newtheorem{theorem}{Theorem}[section]
\newtheorem{lemma}[theorem]{Lemma}
\newtheorem{claim}{Claim}
\newtheorem{corollary}[theorem]{Corollary}
\newtheorem{prop}[theorem]{Proposition}
\newtheorem{definition}[theorem]{Definition}
\newtheorem{example}{Example}
\theoremstyle{remark}
\newtheorem*{remark}{Remark}
\theoremstyle{plain} 
\newcommand{\thistheoremname}{}
\newtheorem{genericthm}{\thistheoremname}
\newenvironment{namedtheorem}[1]
  {\renewcommand{\thistheoremname}{#1}%
   \begin{genericthm}}
  {\end{genericthm}}
\DeclareMathAlphabet{\mathpzc}{OT1}{pzc}{m}{it}
\newcommand{\cat}[1]{\mathpzc{#1}}
\newcommand{\leftmapsto}{\reflectbox{$\mapsto$}}
\newcommand{\Gal}{\mathrm{Gal}}
\newcommand{\Hom}{\mathrm{Hom}}
\newcommand{\End}{\mathrm{End}}
\newcommand{\Aut}{\mathrm{Aut}}
\newcommand{\Ind}{\mathrm{Ind}}
\newcommand{\univ}{\mathrm{univ}}
\newcommand{\tr}{\textrm{tr}}
\newcommand{\Nm}{\textrm{Nm}_{F}}
\newcommand{\Spec}{\textrm{Spec}}
\newcommand{\Supp}{\textrm{Supp}}
\newcommand{\Frob}{\textrm{Frob}}
\newcommand{\Stab}{\textrm{Stab}}
\newcommand{\ord}{\textrm{ord}}
\newcommand{\im}{\textrm{im}}
\newcommand{\st}{\textrm{st}}
\newcommand{\Ad}{\textrm{Ad}}
\newcommand{\ad}{\textrm{ad}}
\newcommand{\diag}{\textrm{Diag}}
\newcommand{\depth}{\textrm{depth}}
\newcommand{\Sp}{\textrm{Sp}}
\newcommand{\GSP}{\textrm{GSp}}
\newcommand{\GL}{\textrm{GL}}
\newcommand{\SL}{\textrm{SL}}
\newcommand{\loc}{\textrm{loc}}
\newcommand{\bighotimes}{\widehat{\bigotimes}}
\newcommand{\hotimes}{\widehat{\otimes}}
\newcommand{\Loc}{\textrm{Loc}}
\newcommand{\Lie}{\textrm{Lie}}
\newcommand{\Fil}{\textrm{Fil}}
\newcommand{\gr}{\textrm{gr}}
\newcommand{\id}{\textrm{id}}
\newcommand{\Q}{\mathbb{Q}}
\newcommand{\N}{\mathbb{N}}
\newcommand{\Z}{\mathbb{Z}}
\newcommand{\R}{\mathbb{R}}
\newcommand{\A}{\mathbb{A}^{\infty}_{F^+}}
\newcommand{\F}{\mathbb{F}}
\newcommand{\T}{\mathbb{T}}
\newcommand{\G}{\mathbb{G}}
\newcommand{\Ocal}{\mathcal{O}}
\newcommand{\Gc}{\mathcal{G}}
\newcommand{\Ic}{\mathcal{I}}
\newcommand{\Nc}{\mathcal{N}}
\newcommand{\Oc}{\mathcal{O}}
\newcommand{\Uc}{\mathcal{U}}
\newcommand{\gf}{\mathfrak{g}}
\newcommand{\mf}{\mathfrak{m}}
\begin{document}

\maketitle

\begin{abstract}
    In this paper, we study the moduli space of unipotent Weil-Deligne representations valued in a split reductive group $G$ and characterise which irreducible components are smooth. We apply these smoothness results to show that a certain space of ordinary automorphic forms is a locally generically free module over the corresponding global deformation ring. 
\end{abstract}

\tableofcontents

\section{Introduction and overview}
Let $F$ be a local $p$-adic field and let $G$ be a connected reductive algebraic group over $F$ and $\hat{G}$ its Langlands dual.
The local Langlands conjectures (proven for $GL_n$ by Harris and Taylor in \cite{HT01}) stipulate the existence of a natural map with finite fibres.
\[\frac{\{\textrm{Smooth irreducible representations of } G(F)\}}{\{\textrm{Isomorphism}\}}\rightarrow \frac{\{\textrm{L-parameters of }{}^LG\}}{\{\hat{G}-\textrm{conjugacy}\}}\]

Let $l\neq p$ be a prime. Let $L\subset \bar{\Q}_l$ be an $l$-adic field and $\Ocal$ its ring of integers with residue field $\F$. 
In recent years, through the work found in \cite{BG19}, \cite{Hellmann}, \cite{DHKM20}, \cite{Zhu20} and \cite{FS21}, there has been great interest in studying the properties of a moduli space of L-parameters $\Loc_{\hat{G},\Ocal}$ and a closely related space, the moduli space of framed L-parameters $\Loc_{\hat{G},\Ocal}^{\square}$.

The spaces $\Loc_{\hat{G},\Ocal}^{\square}$ and $\Loc_{\hat{G},\Ocal}$ can be respectively defined as the scheme whose whose $R$-points are the set of L-parameters with $R$ coefficients, and the algebraic stack obtained from the stack quotient of $\Loc_{\hat{G},\Ocal}^{\square}$ modulo the natural action of $\hat{G}$ via conjugation (that is, equivalence of representations):
\[\Loc_{\hat{G},\Ocal}^{\square}(R)=\{\textrm{L-parameters of $\hat{G}$ with $R$-coefficients}\}\]
\[\Loc_{\hat{G},\Ocal}=[\Loc^{\square}_{\hat{G},\Ocal}/\hat{G}]\]
In addition to this interpretation, the scheme $\Loc_{\hat{G},\Ocal}^{\square}$ satisfies the secondary property that the (completions of) local rings of $\Loc_{\hat{G},\Ocal}^{\square}$ can be interpreted as local Galois deformation rings. In this way, it is hoped to better understand these rings, which are crucial ingredients in the Taylor-Wiles-Kisin and Calegari-Geraghty patching methods. 

Let $W_F$ be the Weil group of the field $F$.
In the case of a split reductive group $G$, one would want to define an $L$-parameter as a homomorphism $W_F\rightarrow \hat{G}(R)$ which satisfies some kind of `continuity', but the problem with this naive approach is that the ring $R$ has no topological structure in general. 
Historically, there are multiple solutions to this issue with varying degrees of usefulness. We are interested in the moduli spaces of Bellovin and Gee (\cite{BG19}), defined via Weil-Deligne representations (defined below); and of Dat, Helm, Kurinczuk and Moss (\cite{DHKM20}), defined through representations of a particular dense subgroup $W_F^0\subseteq W_F$ (defined in Section 1.2 of the reference).
\begin{definition}
    A Weil-Deligne representation valued in $G$ with $R$-coefficients is a pair $(r,N)$ where $r:W_F\rightarrow  G(R)$ is a homomorphism with open kernel and $N$ is an element of the nilpotent cone $\Nc_G(R)\subseteq \Lie(G)(R)$ such that $\Ad(g)N=|g|N$ for all $g\in W_F$ where $|.|:W_F\rightarrow F^{\times}\rightarrow \R^{\geq 0}$ is the valuation of $W_F$ coming from local class field theory.
\end{definition}
The two moduli problems of \cite{BG19} and \cite{DHKM20} are representable by algebraic stacks
$\Loc^{BG}_{G,\Ocal}$ and $\Loc_{G,\Ocal}$, respectively, and are disjoint unions of quotient stacks. 

It is known, as in Proposition 2.7 of \cite{DHKM20}, that these definitions give isomorphic moduli spaces over fields of characteristic 0. In positive characteristic $l$ (or mixed characteristic), only the latter moduli space is generally well-behaved, giving the deformation rings as the completions of its local rings.
However, when $l$ is greater than the Coxeter number $h_G$ of $G$,  
the exponential and logarithm maps of Proposition 2.7 of \cite{DHKM20} that arise from Grothendieck's $l$-adic monodromy are well defined polynomials, and we obtain an isomorphism between the two moduli spaces in this case too.

Let $\Ocal$ be a discrete valuation ring (or field) of residue characteristic (resp. characteristic) $l>h_G$ or $0$. Let $\Nc_G\subseteq\mathfrak{g}$ be the nilpotent cone inside the Lie algebra $\mathfrak{g}$.
In this paper, we seek to understand the irreducible components of the scheme studied in \cite{Hellmann}. This is a reduced affine scheme of finite type $S_{G,\Ocal}$, over the ring $\Ocal$,
whose $R$-points ($R$ an $\Ocal$-algebra) are given by
\[S_{G,\Ocal}(R)=\Big\{(\Phi,N) \in G(R) \times \Nc_G(R)| \Ad (\Phi)N =qN\Big\}.\]
where $q\in \Ocal^{\times }$ is some prime power. 
 
The scheme $S_{G,\Ocal}$ is naturally the space of framed unipotent Weil-Deligne representations over $\Ocal$ with values in $G$ (following Definition 2.1.2 of \cite{BG19}). We will, in particular, be interested in the case where $\Ocal$ is the ring of integers in a finite extension of $\Q_l$ because the $\mathfrak{m}_R$-adic completion of the local rings $R$ of the closed points of this scheme can be interpreted as local Galois deformation rings for well behaved $l$. (In fact, whenever the exponential and logarithm maps of Grothendieck's $l$-adic monodromy theorem gives an isomorphism onto a connected component, as above.)
Furthermore, as $S_{G,\Ocal}$ is a connected component of the tame parameters $Z^1(W^0/P,G)_{\Ocal}$, Equation 4.5 of \cite{DHKM20} extends the description of $S_{G,\bar{\Q}_l}$ for various groups $G$ to a description of the geometry of many other connected components of $\Loc_{G,\bar{\Q}_l}^{\square}$ (those which correspond to the case where the action $\Ad_{\varphi}$ is trivial).

In Section 2, we collect some basic results for $S_G$, in the mixed characteristic setting. The results of this paper depend strongly on the technical relationship between the residue characteristic $l$ and the element $q\in\Ocal$. As such, the section begins with defining the notions of $G$-banality and $q$-considerateness, and explaining how they are related. We then give a description of a decomposition of $S_G$ into (unions of) irreducible components, generalising the decomposition of $S_{\GL_n}$ given in Proposition 2.1 of \cite{Hellmann}, as follows:
Let \[p:S_{G,\Ocal}\rightarrow \Nc_G\]
be the projection map onto the second factor. Let $C\subset \Nc_{G,L}$ be a $G$-conjugacy class inside $\Nc_{G,L}$. (We note that, in the case of $\GL_n$, these can be characterised by partitions of $n$ and in this situation we will denote the conjugacy class corresponding to $\lambda$ by $C_{\lambda}$.) We note that because $S_{G,\Ocal}$ is flat over $\Ocal$, the irreducible components biject naturally with those of $S_{G,L}$.
Then $\overline{p^{-1}(C)}\subseteq S_{G,\Ocal}$ is a union of irreducible components of $S_{G,\Ocal}$ (and, in the case of $G=\GL_n$, is itself irreducible).
In Section 3, we expand on and generalise the results of Bellovin \cite{bellovin_2016} Section 7.2 and Proposition 7.10 by proving Theorems \ref{theorem-smooth} and \ref{theorem -notsmooth}, which together state:

\begin{theorem}\label{Theorem: mainthm}
Assume $q$ is considerate towards $G_{\Ocal}$ (see Definition \ref{Def: qconsiderate}).
\begin{enumerate}
    \item Suppose $C\subseteq \Nc_{G,L}$ is a distinguished nilpotent orbit, or the zero orbit with corresponding component $X_C=\overline{p^{-1}(C)}$. Then $X_C$ is a disjoint union of smooth connected components.
    \item Conversely, when $C\subseteq \Nc_{G,L}$ is a non-trivial non-distinguished orbit, the scheme $X_C$ is singular.
\end{enumerate}
\end{theorem}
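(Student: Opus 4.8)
The plan is to study the scheme $X_C = \overline{p^{-1}(C)}$ via its fibre structure over the nilpotent orbit $C$ and its closure. Over the open locus $p^{-1}(C)$, the map $p$ is (after checking flatness and reducedness, which follows from the setup in Section 2) a fibration whose fibre over $N \in C$ is the variety of $\Phi \in G$ with $\Ad(\Phi)N = qN$, i.e.\ a coset-like space for the centralizer-type group $Z_G(N)$ twisted by the $q$-scaling. The key structural input is the Jacobson--Morozov $\mathfrak{sl}_2$-triple $(e,h,f)$ attached to $N = e$: writing $\mathfrak{g} = \bigoplus_i \mathfrak{g}_i$ for the $h$-grading, the condition $\Ad(\Phi)N = qN$ forces $\Phi$ to lie in a specific coset, and smoothness of $X_C$ near $p^{-1}(C)$ reduces to smoothness of $p^{-1}(C)$ itself together with the statement that $p^{-1}(C)$ is dense in $X_C$ and no other component of $S_{G,\Ocal}$ meets it in codimension zero.

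For part (1), when $C$ is distinguished, the centralizer $Z_G(N)$ is (up to centre and component group) unipotent — this is the defining property of distinguished orbits. I would exploit this: the fibre of $p$ over $N$ becomes, via the grading, an affine space (a torsor under a unipotent group, or more precisely cut out by linear-type equations once one uses that the reductive part of $Z_G(N)$ is central). Hence $p^{-1}(C)$ is smooth, being fibred in affine spaces over the smooth base $C$; and one checks via a dimension count (comparing $\dim Z_G(N)$, the dimension of $C$, and the rank contributions) that $X_C$ is exactly the closure, which is then a union of smooth components because $p^{-1}(C)$ turns out to already be closed, or at worst its closure adds nothing singular — for the zero orbit this is immediate since $p^{-1}(0) = G$ is smooth. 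The $q$-considerateness hypothesis enters precisely to guarantee that the relevant eigenvalue-$q$ conditions on $\Ad(\Phi)$ cut out the expected smooth locus and that no characteristic-$l$ collisions occur among the eigenvalues $q^i$ appearing in the $h$-grading.

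For part (2), when $C$ is non-trivial and non-distinguished, $N$ lies in a proper Levi $M \subsetneq G$ as a distinguished nilpotent in $\mathfrak{m}$, and $Z_G(N)$ has a non-central reductive part $Z$ of positive rank. The strategy is to produce a singular point explicitly: take $\Phi_0$ in the fibre over $N$ lying in this reductive part (e.g.\ a semisimple element commuting with $N$ but not central), and analyze the local ring of $X_C$ at $(\Phi_0, N)$ by computing the tangent space via the cocycle description — $T_{(\Phi_0,N)}S_{G,\Ocal}$ is a space of pairs $(x, n) \in \mathfrak{g}\times\mathfrak{g}$ satisfying linearized equations $\Ad(\Phi_0)n + [\,\cdot\,] = qn$, etc. I expect the tangent space dimension to strictly exceed $\dim X_C$ at such $\Phi_0$ because the positive-rank torus $Z$ in $Z_G(N)$ contributes extra tangent directions (deforming $\Phi_0$ inside $Z$) that are not "balanced" by corresponding deformations of $N$, while generic points of $p^{-1}(C)$ do not see this. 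Equivalently, $X_C$ is a union over a positive-dimensional family and these pieces intersect, or $p^{-1}(C)$ fails to be smooth over $C$ at the bad $\Phi$.

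The main obstacle will be part (2): cleanly identifying the singular locus and carrying out the tangent-space computation in a group-independent way. For $\GL_n$ one can argue with matrices and partitions directly (as Bellovin does in the cited Proposition 7.10), but for general $G$ one must work with the $\mathfrak{sl}_2$-triple and the structure of $Z_G(N)$, and the bookkeeping — keeping track of which weight spaces $\mathfrak{g}_i$ contribute, ensuring $q$-considerateness rules out degenerate coincidences, and verifying that the extra tangent vectors genuinely do not integrate — is where the real work lies. A secondary subtlety is confirming that $\overline{p^{-1}(C)}$ contains the singular point (rather than the singularity living on a different component that happens to pass through), which requires the irreducibility/component analysis from Section 2.
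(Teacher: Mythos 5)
There is a genuine gap, and it sits exactly where your proposal defers the work. Your reduction for part (1) — ``smoothness of $X_C$ near $p^{-1}(C)$ reduces to smoothness of $p^{-1}(C)$ itself \ldots\ $p^{-1}(C)$ turns out to already be closed, or at worst its closure adds nothing singular'' — begs the question. The locus $p^{-1}(C)$ is never closed for $C\neq 0$: since $G$ acts transitively on $C$, one has $p^{-1}(C)\cong G\times^{C_G(e)}\bigl(\lambda(\sqrt{q})C_G(e)\bigr)$, which is a smooth open subscheme of $X_C$ of dimension $\dim G$ for \emph{every} nonzero orbit $C$, distinguished or not; the closure $X_C$ always adds boundary points $(\Phi,N')$ with $N'$ in smaller orbits, including $N'=0$. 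So your argument for part (1) only proves smoothness on the open locus, which cannot be the content of the theorem, and the dichotomy of the statement is decided entirely at the boundary. The paper handles the boundary by a different device: it builds a smooth slice $Y=M\Phi_0\times\gf(\lambda,2)$ (with $M$ the reductive part of $C_G(e)$, finite modulo the centre precisely because $e$ is distinguished, and $\Phi_0=\lambda(\sqrt{q})$), shows the conjugation map $G\times Y\rightarrow X_C$ is surjective (Lemma \ref{Lem:unipotent}, a Jordan-decomposition argument) and smooth, the latter via a Cartesian square whose validity is exactly where considerateness enters: it forces $\{N:\Ad(\Phi_0)N=qN\}=\gf(\lambda,2)$. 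This covers all of $X_C$, boundary included.

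The same issue makes your strategy for part (2) fail as stated: you propose to exhibit a singular point $(\Phi_0,N)$ with $N\in C$, but every such point lies in the open smooth locus $p^{-1}(C)$ described above, so its tangent space in $X_C$ has dimension exactly $\dim G=\dim X_C$; deforming $\Phi_0$ inside the reductive part of the centralizer stays inside $p^{-1}(C)$ and produces no excess directions. (Also, an element commuting with $N$ satisfies $\Ad(\Phi_0)N=N\neq qN$ and is not even in the fibre.) In the paper the singular point is a \emph{boundary} point $P=(\Phi_0,0)$ with $N=0$ and $\Phi_0=z\lambda(\sqrt{q})$ for a carefully chosen $z\in Z_L$; the excess tangent directions come from summing contributions of four subvarieties through $P$ (the $G$-orbit, a torus piece, the eigenspace $\Nc_0\cap X_C$, and a unipotent direction), with the choice of $z$ and the exposed-root Lemma \ref{Lem:exposed roots} guaranteeing that the centralizer of $\Phi_0$ grows only by $2$ dimensions while the other pieces each grow by at least $1$, giving $\dim T_P X_C\geq\dim G+1$. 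So both halves of your plan need the boundary analysis that is in fact the entire substance of Theorems \ref{theorem-smooth} and \ref{theorem -notsmooth}.
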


In Sections 4 and 5 of this paper, we apply the smoothness result of Section 3 to Hida families of ordinary automorphic forms using the Taylor-Wiles-Kisin patching method in a situation very similar to that studied in \cite{geraghty_2018}.
Let $l$ be a prime and $K$ be a finite extension of $\Q_l$ with ring of integers $\Ocal$. Let $F^+$ be a totally real global number field, and consider an imaginary quadratic extension $F$ of $F^+$. 
The Galois representations considered will correspond to certain Hida families of ordinary automorphic forms on a unitary algebraic group $G_D/F^+$
which is a unitary form of a unit group of a division algebra $D/F^+$. 
We will define a certain space of Hida families of ordinary automorphic forms $S^{\ord}(U(l^{\infty}),L/\Ocal)_m$ for $G_D$ with Hecke operators $\T$ and a corresponding deformation ring $R^{univ}_{\mathcal{S}}$. We will then use the Taylor-Wiles patching method to deduce the following theorem.

\begin{theorem}[Theorem \ref{theorem locfree}]
Suppose $l>n$. The module $S^{\ord}(U(l^{\infty}),L/\Ocal)_m^{\vee}[1/l]$ is a finite locally free $R^{univ}_{\mathcal{S}}[1/l]$-module.
\end{theorem}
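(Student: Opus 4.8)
The plan is to run the Taylor--Wiles--Kisin patching method in the ordinary/Hida-theoretic setting, essentially reproducing the argument of \cite{geraghty_2018} but feeding in Theorem~\ref{Theorem: mainthm} at the places away from $l$ as the local input. First I would fix the deformation problem $\mathcal{S}$ carefully: the ordinary condition at each $v\mid l$, the unramified condition outside a finite set $S$, and at each $v\in S$ not dividing $l$ the tamely ramified (unipotent) condition cut out by a union of the components $X_C=\overline{p^{-1}(C)}$ with $C$ distinguished or the zero orbit. Since $l>n=h_{\GL_n}$, the prime power $q=\#k(v)$ is considerate towards the relevant $\GL_n$ (Definition~\ref{Def: qconsiderate}), so Theorem~\ref{Theorem: mainthm}(1) applies and these components are smooth over $\Ocal$. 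On the automorphic side, Hida theory for the definite unitary group $G_D$ makes $S^{\ord}(U(l^{\infty}),L/\Ocal)_m^{\vee}$ a finite free module over the relevant Iwasawa algebra $\Lambda$; choosing Taylor--Wiles primes in the usual way (under the big-image hypotheses on $\bar\rho$) and patching then produces a module $M_\infty$ carrying compatible actions of $R_\infty=R^{\loc}[[x_1,\dots,x_t]]$ and $S_\infty=\Lambda[[y_1,\dots,y_q]]$, where $R^{\loc}=\bighotimes_{v}R_v^{\square}$ is the completed tensor product over $\Ocal$ of the framed local deformation rings at the places of $S$ and above $l$, such that $M_\infty$ is finite free over $S_\infty$, $M_\infty/\mathfrak{a}M_\infty\cong S^{\ord}(U(l^{\infty}),L/\Ocal)_m^{\vee}$ and $R_\infty/\mathfrak{a}\cong R^{univ}_{\mathcal{S}}$ for the patching ideal $\mathfrak{a}$, and the number of Taylor--Wiles primes and framing variables is chosen so that $\dim R_\infty=\dim S_\infty$. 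This last equality rests on the local dimension counts: at $v\mid l$ the dimension of the ordinary framed deformation ring as in \cite{geraghty_2018}, and at $v\in S$ the fact, from the component decomposition of Section~2 together with Theorem~\ref{Theorem: mainthm}(1), that $X_C$ is flat over $\Ocal$ of the expected relative dimension.

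The second step is to invert $l$ and check that $R_\infty[1/l]$ is regular at every prime of $\Supp M_\infty[1/l]$. The framing and Taylor--Wiles variables only contribute a formally smooth factor, so this reduces to regularity of the generic fibre of $R^{\loc}=\bighotimes_v R_v^{\square}$ along the relevant locus. At $v\mid l$ this is the known regularity of the generic fibre of the ordinary local deformation ring (\cite{geraghty_2018}). At $v\in S$ with $v\nmid l$, the $\mathfrak{m}$-adic completion of the local ring of $S_{G,\Ocal}$ at the relevant closed point is the framed local deformation ring, and the chosen component $X_C$ is smooth over $\Ocal$ by Theorem~\ref{Theorem: mainthm}(1), hence its generic fibre is regular. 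So $R_\infty[1/l]$ is regular along $\Supp M_\infty[1/l]$.

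The third step is a depth count. Since $M_\infty$ is finite free over $S_\infty$ and $S_\infty$ is $\Ocal$-flat, $l$ is a non-zero-divisor on $M_\infty$, so $\depth_{R_\infty[1/l]}M_\infty[1/l]=\depth_{R_\infty}M_\infty-1=\depth_{S_\infty}M_\infty-1=\dim S_\infty-1=\dim R_\infty-1=\dim R_\infty[1/l]$; hence $M_\infty[1/l]$ is maximal Cohen--Macaulay over $R_\infty[1/l]$. Combined with the regularity just established, the Auslander--Buchsbaum formula (applied after localizing at each prime of $\Spec R_\infty[1/l]$, the module being zero away from its support) shows that $M_\infty[1/l]$ is a finitely generated projective, i.e.\ finite locally free, $R_\infty[1/l]$-module. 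Base changing along the surjection $R_\infty[1/l]\twoheadrightarrow R^{univ}_{\mathcal{S}}[1/l]$ and using $M_\infty[1/l]\otimes_{R_\infty[1/l]}R^{univ}_{\mathcal{S}}[1/l]=M_\infty[1/l]/\mathfrak{a}M_\infty[1/l]\cong S^{\ord}(U(l^{\infty}),L/\Ocal)_m^{\vee}[1/l]$ then shows that the latter is finite locally free over $R^{univ}_{\mathcal{S}}[1/l]$, which is the claim.

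The main obstacle is not the commutative algebra of the last two steps but the geometric input and its correct placement: one must verify that the local deformation condition imposed at each $v\in S$ with $v\nmid l$ is genuinely cut out by the smooth components $X_C$ with $C$ distinguished or zero (for instance because the local components of the relevant automorphic representations are Steinberg or unramified), since Theorem~\ref{Theorem: mainthm}(2) shows that for any other $C$ the local ring is singular and the argument collapses; and one must check that with these conditions the global-to-local maps land so that $\Supp M_\infty[1/l]$ really lies in the regular locus, and that the Taylor--Wiles system can be arranged so that $\dim R_\infty=\dim S_\infty$ exactly. Beyond that, carrying the ordinary Hida theory for $G_D$ and the patching construction through in precisely this form --- effectively re-running the method of \cite{geraghty_2018} with the local geometry of Section~3 in place of its local deformation-ring computations away from $l$ --- is the bulk of the remaining work.
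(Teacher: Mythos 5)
Your proposal follows essentially the same route as the paper: Taylor--Wiles--Kisin patching in the style of Geraghty, regularity of the generic fibres of the local deformation rings (with the places away from $l$ reducing, via the $\tilde r$-discrete series stacks of Proposition \ref{prop-stacks} and the non-degeneracy input at the remaining ramified places, to the smoothness of the unipotent/Steinberg components supplied by Theorem \ref{theorem-smooth}), a depth--dimension count showing the patched module is maximal Cohen--Macaulay over $R_\infty$, and the Auslander--Buchsbaum formula to get local freeness before descending along $R_\infty[1/l]\twoheadrightarrow R^{univ}_{\mathcal S}[1/l]$. One small correction: $l>n$ does not by itself make $q$ considerate towards $\GL_n$ over $\Ocal$ (that depends on the order of $q$ modulo $l$); what the argument actually needs, and what the paper uses, is only regularity of the generic fibre, where $q$ is not a root of unity in characteristic zero, so considerateness there is automatic.
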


As a consequence, we can deduce that $R^{univ}_{\mathcal{S}}[1/l]\cong \T[1/l]$, and that the multiplicity of automorphic forms with a given characteristic zero Galois representation is constant along connected components of $R^{univ}_{\mathcal{S}}[1/l]$. In particular, one can extend any such multiplicity results from the classical case to the case of non-classical Hida families. 

\subsection{Acknowledgements}
The research in this work is part of the author's PhD thesis, supported by the Engineering and Physical Sciences Research Council (EPSRC). I would like to thank my supervisor, Jack Shotton, for the advice and support given me throughout this project. I would also like to thank the anonymous reviewer for the detailed comments and suggestions given.

\section{Considerateness and the relation to the stack of L-parameters}

Let $\Ocal$ be a discrete valuation ring or a field with residue field $\F$ of characteristic $l$ or $0$ and fraction field $L$. 
Let $G$ be a connected reductive algebraic group over $\Ocal$, let $\mathfrak{g}$ be its Lie algebra, and let $h_G$ be its Coxeter number. 
Throughout the paper, we assume $l>h_G$ whenever the residue characteristic of $\Ocal$ is finite.

\begin{definition}{\label{Def: qconsiderate}}
    Let $h_G$ be the Coxeter number of $G$. Let $q\in \Ocal^{\times}$ be an element of $\Ocal$ such that $q^k-1$ is invertible in $\Ocal$ for all $k\leq h_G$. When this occurs, we say that $q$ is \textit{considerate} towards $G$ over $\Ocal$.
\end{definition}

In applications, the ring $\Ocal$ will either be a field or the ring of integers in some field extension of $\Q_l$. Notice that, in this case, $q$-considerateness is equivalent to the condition `$1,q,q^2,...,q^{h_G}$ are all distinct in the residue field $\F$' (in a sense, $q$ `treads lightly' around $G$).

\begin{definition}{\label{Def: banal}}
    Let $G$ be a split reductive group over a field $L$ of characteristic $l$. 
    We say:
    \begin{itemize}
        \item $l$ is $G$-banal if $l$ does not divide the order of the finite group $G(\F_q)$.
        \item $l$ is geometrically-$G$-banal if, for any algebraically closed field $E$ of characteristic $l$, any $\phi\in \Loc^{\square}_{G,E}$ can be `Frobenius twisted' by some $g\in C_{G}(\phi(I_F))$ (that is, the centraliser of the inertia subgroup) so that $\phi^g$ is a smooth point of $ \Loc^{\square}_{G,E}$.
        
        The `Frobenius twist' of a representation $\phi:W_F\rightarrow G(L)$ by $g\in C_{G}(\phi(I_F))$ is the representation $\phi^g:W_F\rightarrow G(L)$ which is equal to $\phi$ when restricted to the inertia subgroup, and for which $\phi^g(\Frob)=\phi(\Frob)g$. 
    \end{itemize}
\end{definition}

\begin{remark} Let $\hat{G}$ denote the Langlands dual group of $G$.
    We remark that the notion `$l$ is geometrically-$\hat{G}$-banal' is precisely the notion that `$l$ is ${}^LG$-banal', as defined in Definition 5.27 of \cite{DHKM20}. We introduce the notion simply to remove the extraneous Langlands dual, which is only required in the following Proposition.
\end{remark}

\begin{prop}
    Suppose that $\F$ is a field of positive characteristic $l>h_G$ and that $G$ is a split reductive group. 
Then we have the following implications. 

\begin{itemize}
    \item If $q$ is considerate towards $G_{/\F}$, then $l$ is geometrically-$G$-banal. 
    \item If $l$ is geometrically-$\hat{G}$-banal (that is $^{L}G$-banal), then $l$ is $G$-banal. 
    \item If $G=\GL_n$ or $\SL_n$, then the condition `$l$ is $G$-banal' implies the condition $q$ is considerate towards $\hat{G}_{/\Ocal}$. Thus, all concepts are equivalent.
\end{itemize}
\end{prop}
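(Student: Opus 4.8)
The statement packages three implications of rather different character, and I would attack them separately, using one common reduction. Over an algebraically closed field $E$ of characteristic $l$, smoothness of $\Loc^{\square}_{G,E}$ at a parameter $\phi$ is detected by the vanishing of $H^2(W_F,\ad\phi)$; by local duality (valid since $l\neq p$) together with the self-duality of $\ad\phi$ under the Killing form — nondegenerate as $l>h_G$ — this group is dual to $H^0(W_F,\ad\phi(1))$. Writing $H=C_G(\phi(I_F))$, which is smooth (again since $l>h_G$) so that $(\ad\phi)^{I_F}=\Lie H$ with $\Frob$ acting through $\Ad(\phi(\Frob))$, one sees that $\phi$ is a smooth point exactly when $q^{-1}$ is not an eigenvalue of $\Ad(\phi(\Frob))$ on $\Lie H$, and a Frobenius twist by $g\in H$ replaces this operator by $\Ad(\phi(\Frob)g)|_{\Lie H}=\Ad(\phi(\Frob))|_{\Lie H}\circ\Ad(g)|_{\Lie H}$. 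For the \emph{first implication} ($q$-considerate over $\F\Rightarrow$ geometrically-$G$-banal) I must then produce, for each $\phi$, a twist $g\in H$ with $q^{-1}\notin\mathrm{Spec}\big(\Ad(\phi(\Frob)g)|_{\Lie H}\big)$. Passing to the semisimple part of $\phi(\Frob)$ and to a stable maximal torus $S\subseteq H^{\circ}$, one runs the eigenvalue bookkeeping of \cite{DHKM20} \S 5: each eigenvalue of the twisted operator is a product of a root value $\alpha(g)$ with an eigenvalue of the finite-order ``Weyl'' part of $\Ad(\phi(\Frob))$ acting on an $S$-weight orbit, the latter being a root of unity whose order is $\leq h_G$ (via the classification of semisimple automorphisms and the bound $<h_G$ on root heights); the root values can be pushed off any prescribed value since $E$ is infinite and each locus $\alpha(g)=c$ is proper in $S$, while the root-of-unity factors avoid $q^{-1}$ precisely because considerateness forbids $q^k=1$ for $k\leq h_G$. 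When $H^{\circ}$ is a torus one uses the order bound directly. I expect this group-theoretic step, and the careful handling of a possibly disconnected $H$, to be the main obstacle.

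For the \emph{second implication} (geometrically-$\hat G$-banal $\Rightarrow$ $l$ is $G$-banal) I argue the contrapositive by exhibiting one bad parameter. Since $l\nmid q$ and $|G(\F_q)|$ equals a power of $q$ times $\prod_i(q^{d_i}-1)$, with $d_i$ the fundamental degrees of $G$ (equivalently of $\hat G$, as these depend only on the common Weyl group), $l\mid |G(\F_q)|$ forces $q^{d}=1$ in $\F$ for some $d=d_i$. Let $(e,h,f)$ be a principal $\mathfrak{sl}_2$-triple in $\hat{\mathfrak g}$ (it exists since $l>h_G$), set $u=\exp(e)$, and define $\phi\colon W_F\to\hat G(E)$ to be trivial on wild inertia and on the prime-to-$l$ part of tame inertia, to send a topological generator of the pro-$l$ part of $I_F^{\mathrm{tame}}$ to the regular unipotent $u$ (this is forced to be unipotent, being a finite $l$-element in characteristic $l$), and to send $\Frob$ to the image of $\mathrm{diag}(t,t^{-1})$ under the corresponding $\SL_2\to\hat G$, where $t\in E$ satisfies $t^2=q$. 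This is a valid parameter, since that element conjugates $u=\exp(e)$ to $\exp(t^2e)=\exp(qe)=u^{q}$. Now $C_{\hat G}(\phi(I_F))=C_{\hat G}(u)$ is abelian, hence acts trivially on its own Lie algebra, so \emph{every} Frobenius twist acts on $\Lie C_{\hat G}(u)=\hat{\mathfrak g}^{e}$ exactly as $\phi(\Frob)$ does; decomposing $\hat{\mathfrak g}$ under the principal $\mathfrak{sl}_2$, the space $\hat{\mathfrak g}^{e}$ is the sum of the highest-weight lines, of $\ad(h)$-weights $2m_i$ with $m_i$ the exponents, so the eigenvalues of $\Ad(\phi(\Frob))$ on it are the $q^{m_i}$. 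Hence no twist of $\phi$ is smooth: $q^{-1}=q^{m_i}$ for some $i$, because $q^{m_i+1}=q^{d_i}=1$. This contradicts geometric-$\hat G$-banality. (Alternatively this implication is contained in \cite{DHKM20} \S 5.)

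The \emph{third implication} is a direct comparison of explicit conditions. For $\GL_n$ the fundamental degrees are $1,2,\dots,n$, the Coxeter number is $n$, and $\widehat{\GL_n}=\GL_n$, so both ``$l$ is $\GL_n$-banal'' (i.e.\ $l\nmid|\GL_n(\F_q)|$) and ``$q$ is considerate towards $\widehat{\GL_n}$'' unwind to the single condition $q^k\neq1$ in $\F$ for $1\leq k\leq n$. For $\SL_n$ the degrees are $2,\dots,n$, while $\widehat{\SL_n}=\mathrm{PGL}_n$ has Coxeter number $n$; ``$l$ is $\SL_n$-banal'' reads $q^k\neq1$ for $2\leq k\leq n$, whereas ``$q$ considerate towards $\mathrm{PGL}_n$'' reads $q^k\neq1$ for $1\leq k\leq n$, and these coincide because $q=1$ already forces $q^2=1$. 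Finally, since $\Ocal$ is local with residue field $\F$ one has $q$-considerateness over $\Ocal$ iff over $\F$, and considerateness depends only on $h_G=h_{\hat G}$; combining this with the first two implications produces a cycle of implications showing that, for $G=\GL_n$ or $\SL_n$, all the notions in play — considerate over $\Ocal$ or over $\F$, towards $G$ or towards $\hat G$; geometrically-$G$-banal or geometrically-$\hat G$-banal; $l$ being $G$-banal or $\hat G$-banal — are equivalent.
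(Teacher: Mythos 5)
Your first bullet is where the proposal has a genuine gap. The reduction of smoothness at $\phi$ to the condition that $q^{-1}$ is not an eigenvalue of $\Ad(\phi(\Frob)g)$ on $\Lie C_G(\phi(I_F))$ is a reasonable starting point (though note that for reductive non-semisimple $G$ the Killing form is always degenerate on the centre, so you should appeal to a nondegenerate invariant form or to the self-duality of $\ad$ in very good characteristic, not literally the Killing form). But the step that actually makes the implication true --- that after a suitable twist every eigenvalue factors as a root value times a root of unity of order at most $h_G$, uniformly over \emph{all} inertia actions, including wildly ramified ones and those for which $H=C_G(\phi(I_F))$ is disconnected and Frobenius acts by a nontrivial outer automorphism --- is exactly the hard combinatorial content, and you assert it with a vague pointer to ``the eigenvalue bookkeeping of \cite{DHKM20} \S 5'' while yourself flagging it as the main obstacle. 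That bookkeeping is precisely Theorems 5.6 and 5.7 of \cite{DHKM20} (the polynomials $\chi^*_{G,\theta}$ and the bound on their cyclotomic factors in terms of the Coxeter number); the paper's proof of this bullet consists of observing that considerateness is the non-vanishing of $\prod_{k\leq h_G}\Phi_k(q)=\chi^*_{G,1}(q)$ in $\F$ and then invoking those theorems. As written, your first implication is a plan rather than a proof: you must either carry out the eigenvalue analysis in full generality (which amounts to reproving DHKM 5.6--5.7) or cite those results precisely, as the paper does.

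Your second and third bullets are correct. For the second, your route is genuinely different from the paper's: both arguments start from the Chevalley--Steinberg order formula to get $q^{d}=1$ in $\F$ for some fundamental degree $d\leq h_G$, but the paper then concludes non-banality formally via $\chi^*_{G,1}$ and Theorem 5.7 of \cite{DHKM20}, whereas you exhibit an explicit bad parameter: inertia sent to a regular unipotent $u=\exp(e)$ and Frobenius to the principal cocharacter evaluated at $\sqrt q$, so that $C_{\hat G}(u)$ is abelian, no Frobenius twist changes the action on $\hat{\mathfrak{g}}^{e}$, and the eigenvalues there are $q^{m_i}$ with $m_i$ the exponents, so $q^{d_i}=1$ forces $q^{-1}=q^{m_i}$ and no twist is smooth. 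Granting the standard criterion ``$\phi$ smooth iff $H^2(W_F,\ad\phi)=0$'' and the smoothness and abelianness of regular-unipotent centralisers in very good characteristic, this is self-contained and more illuminating than the paper's citation-based argument. The third bullet is essentially the paper's own computation (degrees $1,\dots,n$ for $\GL_n$, resp.\ $2,\dots,n$ for $\SL_n$, against the Coxeter number $n$ of $\GL_n$, resp.\ $\mathrm{PGL}_n$, with $q=1$ forcing $q^2=1$), and your remark that considerateness over the local ring $\Ocal$ is equivalent to considerateness over its residue field $\F$ is a correct and welcome clarification of the statement.
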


\begin{proof}
    By definition, $q$ is considerate towards $G_{/\F}$ when the order of $q$ within $\F$ is greater than the Coxeter number $h=h_G$. This is equivalent to $\prod_{n\leq h}\Phi_n(q)\neq0$ inside $\F$ where $\Phi_n$ is the $n$th cyclotomic polynomial. This is the polynomial $\chi^*_{G,1}(q)$ of Theorem 5.7 of \cite{DHKM20} (see Definition B.3). Hence, by Theorems 5.6 and 5.7 of \cite{DHKM20}, it follows that this condition implies that $l$ is $^{L}G$-banal. 

    That $l$ is $^{L}G$-banal implies that $l$ is $G$-banal is a consequence of the Chevalley-Steinberg formula (see. Theorem 25a) of \cite{ChevalleySteinberg}); 
    \[|G(\F_q)|=q^N\prod_{d}(q^d-1)\]
    where $d$ ranges over the fundamental degrees of the Weyl group of $G$. If $l$ divides $\prod_{d}(q^d-1)$, then $l$ certainly divides $\prod_{n\leq h}\Phi_n(q)$ as the Coxeter number is the highest fundamental degree. This shows the second statement by virtue of Theorem 5.7 in \cite{DHKM20}.

    In the case $G=\SL_n$, we get $|\SL_n(\F_q)|=q^N\prod_{i=2}^n(q^i-1)$. Hence, if $l$ is $G$-banal, it follows that the order of $q$ in $\F$ is at least $n$. Thus $q$ is considerate towards $\hat{G}_{/\F}$. The case of $\GL_n$ is similar. 
\end{proof}

\begin{remark}
    It is worth noting that Corollary 5.27 of \cite{DHKM20} gives the criterion that $G$-banal and ${}^LG$-banal are equivalent concepts whenever $G$ is unramified and has no exceptional factors (where triality forms of type $D_4$ are also considered exceptional), but this does not hold in general (see, for example, Remark 5.22 of \cite{DHKM20}). Outside the case of type $A_n$, considerateness is strictly weaker than geometric-$G$-banality. For example, the order of $G=\Sp_6(\F_q)$ is
    \[|\Sp_6(\F_q)|=q^9(q^2-1)(q^4-1)(q^6-1)\]
    and the Coxeter number of $\hat{G}=\textrm{SO}_7$ is $h=6$. However, if $q^5\equiv1 \pmod{l}$, then $l$ is $G$-banal, but $q$ is not considerate towards $\hat{G}_{\Ocal}$.
\end{remark}
We give the following definition:
\begin{definition}
We define the affine scheme $S_{G,\Ocal}$ over $\Ocal$ as the scheme whose $R$ points (for $R$ an $\Ocal$ algebra) are $\big\{(\Phi,N)\in G(R)\times \Nc_G(R): \Ad(\Phi) N =qN\big\}$.
\end{definition}
Corollary 5.4 of \cite{bellovin_2016} shows that this is a reduced scheme when $\Ocal$ is a characteristic zero field, and hence, is a variety (i.e. a finite-type, separated, reduced scheme over a field). 
As discussed in the introduction, we may picture $S_{G,\Ocal}$ as the moduli space of unipotent Weil-Deligne representations $(r,N)$ valued in $G_{\Ocal}$. In this context, the adjective `unipotent' means that the restriction of $r$ to the inertia subgroup $I_F$ is trivial (that is, $r(I_F)=1$).

\begin{prop}\label{prop2.4}

\begin{enumerate}
    \item Suppose $q$ is considerate towards $G_{/\Ocal}$. If $(\Phi,N)\in G\times \gf$ satisfies $\Ad(\Phi)N=qN$, then $N\in\Nc_G$. Hence, 
    \[S_G(R)=\big\{(\Phi,N)\in G(R)\times \gf(R): \Ad(\Phi) N =qN\big\},\]
    and the requirement that $N\in\Nc_G(R)$ is redundant.  
    \item When $G$ is split and $l>h_G$, then $S_{G,\Ocal}$ is isomorphic to a closed subscheme of the moduli space of tame parameters $Z^1(W^0_F/P_F,G)_{\Ocal}$ (see Section 1.2 of \cite{DHKM20} for a definition of this space).
    \item In addition, when $l$ is geometrically-$G$-banal, this space is a connected component of $Z^1(W^0_F/P_F,G)_{\Ocal}$.
\end{enumerate}
\end{prop}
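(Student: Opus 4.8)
My plan for the first statement is purely invariant-theoretic. Under our running assumption $l>h_G$, Chevalley restriction holds and the ring of invariants $\Ocal[\gf]^{G}$ is a polynomial ring on homogeneous generators $f_{1},\dots,f_{r}$ whose degrees $d_{1},\dots,d_{r}$ are the fundamental degrees of $G$; in particular every $d_{i}\le h_{G}$, the Coxeter number being the largest fundamental degree, and in this range of $l$ the nilpotent cone $\Nc_{G}$ is exactly the (reduced) common zero locus of $f_{1},\dots,f_{r}$. Now let $(\Phi,N)$ be an $R$-point of $G\times\gf$ with $\Ad(\Phi)N=qN$. For each $i$, invariance and homogeneity of $f_{i}$ give $f_{i}(N)=f_{i}(\Ad(\Phi)N)=f_{i}(qN)=q^{d_{i}}f_{i}(N)$, hence $(q^{d_{i}}-1)f_{i}(N)=0$ in $R$. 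Since $d_{i}\le h_{G}$ and $q$ is considerate towards $G$, each $q^{d_{i}}-1$ is a unit of $\Ocal$, so $f_{i}(N)=0$ for all $i$ and therefore $N\in\Nc_{G}(R)$. The claimed redundancy of the condition $N\in\Nc_{G}(R)$ in the description of $S_{G}(R)$ follows immediately.

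\textbf{Part (2).} For the second statement I would exhibit the closed immersion explicitly via Grothendieck's monodromy dictionary. Write $\Uc_{G}\subseteq G$ for the unipotent variety, a closed subscheme. Since $G$ is split and $l>h_{G}$, Proposition 2.7 of \cite{DHKM20} provides mutually inverse $G$-equivariant isomorphisms of $\Ocal$-schemes $\exp\colon\Nc_{G}\xrightarrow{\ \sim\ }\Uc_{G}$ and $\log\colon\Uc_{G}\xrightarrow{\ \sim\ }\Nc_{G}$, given by the usual truncated power series (these are honest polynomial maps precisely because $l>h_{G}$). On the other side, an $R$-point of $Z^{1}(W_{F}^{0}/P_{F},G)$ is encoded (cf. Section 1.2 of \cite{DHKM20}) by the images $\Phi$ of a Frobenius lift and $T$ of a topological generator of tame inertia, subject to the single relation $\Phi T\Phi^{-1}=T^{q}$. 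Define $\iota\colon S_{G,\Ocal}\to Z^{1}(W_{F}^{0}/P_{F},G)_{\Ocal}$ on $R$-points by $(\Phi,N)\mapsto(\Phi,\exp N)$. By $G$-equivariance of $\exp$ and the defining relation $\Ad(\Phi)N=qN$ we get $\Phi(\exp N)\Phi^{-1}=\exp(\Ad(\Phi)N)=\exp(qN)=(\exp N)^{q}$, so $\iota$ is a well-defined morphism of $\Ocal$-schemes. It factors as an isomorphism onto the closed subscheme $\{(\Phi,T)\in Z^{1}(W_{F}^{0}/P_{F},G):T\in\Uc_{G}\}$ (the preimage of the closed subscheme $\Uc_{G}\subseteq G$ under the $T$-coordinate), with inverse $(\Phi,T)\mapsto(\Phi,\log T)$: this lands in $S_{G}$ because the polynomial identity $\log(T^{q})=q\log T$ on $\Uc_{G}$, valid in characteristic $0$ and hence over $\Ocal$, converts $\Phi T\Phi^{-1}=T^{q}$ into $\Ad(\Phi)\log T=q\log T$. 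Thus $\iota$ identifies $S_{G,\Ocal}$ with a closed subscheme of $Z^{1}(W_{F}^{0}/P_{F},G)_{\Ocal}$, namely the locus of tame parameters that are unipotent on inertia.

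\textbf{Part (3).} It remains to show that, when $l$ is geometrically-$G$-banal, this closed subscheme is also open, hence a connected component. This is the step that genuinely uses banality rather than mere considerateness, and I expect it to be the main obstacle. The plan is to invoke the structure of $Z^{1}(W_{F}^{0}/P_{F},G)_{\Ocal}$ in the geometrically-$G$-banal (equivalently ${}^{L}G$-banal, after passing to $\hat G$) regime: there it is a \emph{finite} disjoint union of open-and-closed pieces indexed by "inertial types", i.e. by the $G$-conjugacy classes of the semisimple part of the image of tame inertia, taken up to the $q$-power twist forced by the Frobenius relation — the finiteness being precisely the banality statement that only finitely many semisimple classes of order prime to $l$ occur (see \cite{DHKM20}, Section 4 and Equation 4.5, and the surrounding structural results). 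Equivalently, one shows that the assignment sending a tame parameter to the $G$-conjugacy class of the semisimple part of $T$ is locally constant on $Z^{1}(W_{F}^{0}/P_{F},G)_{\Ocal}$ under this hypothesis — a rigidity statement for prime-to-$l$ semisimple classes — so that its fibre over the trivial class is open and closed; by Part (2) that fibre is exactly $\iota(S_{G,\Ocal})$. Parts (1) and (2) are by contrast formal consequences of invariant theory and of the Springer-type isomorphism $\exp\colon\Nc_{G}\xrightarrow{\ \sim\ }\Uc_{G}$, so the weight of the argument sits in importing (or re-deriving) this component description from \cite{DHKM20}.
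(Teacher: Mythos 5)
Your parts (1) and (2) are correct and essentially identical to the paper's own argument: part (1) is the Chevalley restriction theorem over $\Ocal$ (the paper cites Cotner's integral version, valid since $l>h_G$), homogeneity of the generating invariants in degrees at most $h_G$, and considerateness making each $q^{d_i}-1$ a unit, so that $N$ maps to $0$ in $\gf//G$ and hence lies in $\Nc_G$; part (2) is the same polynomial $\exp/\log$ identification of $S_{G,\Ocal}$ with the closed locus of tame parameters whose inertia image is unipotent (the paper invokes Balaji et al.\ rather than Proposition 2.7 of \cite{DHKM20}, but the content is the same).

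Part (3), however, has a genuine gap: the open-and-closedness of the unipotent-on-inertia locus is exactly the point at issue, and you assert it rather than prove it. The decomposition of $Z^1(W^0_F/P_F,G)$ ``by inertial type'' that you propose to import is, in the references you give (Section 4 and Equation 4.5 of \cite{DHKM20}), a statement about the generic fibre over $\bar{\Q}_l$; the whole difficulty of (3) is integral. Over $\Ocal$ the semisimple class of the inertia generator need not be locally constant: already for $G=\G_m$ one has $Z^1=\{(\phi,\sigma):\sigma^{q-1}=1\}$, and if $l\mid q-1$ (the non-banal case) an $l$-power root of unity $\sigma$ specialises to $\sigma=1$ in the special fibre, so the locus $\sigma=1$ is closed but not open. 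Thus your ``rigidity for prime-to-$l$ semisimple classes'' begs the question of why only prime-to-$l$ behaviour can collapse onto the trivial class; that is precisely where geometric banality must enter, and your sketch never brings it to bear. The paper closes this step by passing to the adjoint quotient: the scheme-theoretic image $\Uc^+$ of $(\phi,\sigma)\mapsto\sigma$ in $G//G$ is supported on $M$-torsion (Proposition 2.6 of \cite{DHKM20}), hence finite flat over $\Ocal$, and banality is used exactly once, through reducedness of the special fibre $Z^1_{\F}$ (Proposition 5.26 of \cite{DHKM20}), which forces $\Uc^+_{\F}$ to be reduced; then the section $\sigma=1$ is open and closed in $\Uc^+$ and its preimage, which by part (2) is $S_{G,\Ocal}$, is a connected component. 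To complete your proposal you would need to supply an integral statement of this kind (or locate one in \cite{DHKM20}); as written, the decisive step is only a plan.
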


\begin{proof}
Because $l> h_G$, the prime $l$ is very good in the notation of Section 2.4 of
    \cite{Cotner}. Hence, by Theorem 4.13 of \cite{Cotner}, we have an isomorphism of $\Ocal$-algebras
     \[\Ocal[\mathfrak{g}]^G\rightarrow \Ocal[\mathfrak{t}]^W\]
     given by the restriction of functions on $\gf$ to $\mathfrak{t}$, where $\mathfrak{t}$ is a Cartan subalgebra of $\gf$ and $W$ is the Weyl group. 
    
    By chapter 3 of \cite{hump} (see table 1 of Section 3.7, and table 2 of Section 3.18) the generators of $\Ocal[\mathfrak{t}]^W$ are homogeneous of degree at most the Coxeter number $h_G$, and hence the same is true for $\Ocal[\mathfrak{g}]^G$. We note that while this reference restricts to the case of a field of characteristic zero, the results extend to $\Ocal$ because $|W|$ is invertible inside $\Ocal$ and $\Ocal[\mathfrak{t}]^W$ is a free $\Ocal$-module. 
    
    Let $s$ be a generator of $\Ocal[\mathfrak{g}]^G$ and suppose $(\Phi,N)\in G(R)\times \gf(R)$ satisfies $\Ad(\Phi)N=qN$. Then as $s$ is $G$-invariant and homogeneous of degree at most the Coxeter number $h_G$, 
    the condition
    $s(\Ad(\Phi)N)=s(qN)$ implies $s(N)=q^is(N)$ for some $i\leq h_G$. As $q$ is considerate towards $G_{/\Ocal}$, we see that $q^i-1$ is a non-zero divisor in $\Ocal$, and hence that $s(N)=0$. 
    Thus the image of $N$ in the GIT quotient $\mathfrak{g}//G$ is zero. Since $l$ is very good, Theorem 4.12 of \cite{Cotner} shows that $N$ lies in the set of $R$-points of the nilpotent cone. Part 1 of the proposition follows.

    Suppose $G$ is a split group. As $l\neq p$, the space $Z^1=Z^1(W^0_F/P_F,G)_{\Ocal}$ has a model as a flat affine scheme over $\Ocal$ with $R$-points equal to 
    \[Z^1(W^0_F/P_F,G)_{\Ocal}(R)=\big\{(\phi,\sigma)\in G(R)^2: \phi\sigma\phi^{-1}=\sigma^q\big\}.\]
    Since $l>h_G$, the exponential and logarithm maps of Section 6 of \cite{Balaji_2017} are well defined polynomials, and thus we have an isomorphism between the nilpotent cone in $\Nc_G$ and unipotent cone $\mathcal{U}_G$. Hence, we have a map 
    \begin{align*}
          S_{G,\Ocal} &\rightarrow Z^1(W^0_F/P_F,G)_{\Ocal} \\
          (\Phi,N) & \mapsto (\Phi,\exp{N})
    \end{align*}
    which is an isomorphism onto the closed subscheme of $Z^1(W^0_F/P_F,G)_{\Ocal}$ given by those elements $(\phi,\sigma)$ with $\sigma\in \mathcal{U}\subseteq G$, where $\mathcal{U}$ is the unipotent cone. 
    
    For part 3, suppose $l$ is geometrically-$G$-banal. Let $\mathcal{U}^+$ be the scheme-theoretic image of $Z^1(W^0_F/P_F,G)_{\Ocal}$ through the second projection onto $G$.
Proposition 2.6 of \cite{DHKM20} tells us the underlying reduced scheme of $\Uc^+$ is a subscheme of $\{\sigma\in G//G: \sigma^M=1 \}$ for some fixed $M\in\N$. Thus $\Uc^+$ is $0$-dimensional over $\Ocal$. As $Z^1(W^0_F/P_F,G)$ is flat and $\Ocal$ is a discrete valuation ring, we conclude $\mathcal{U}^+$ is a finite flat $\Ocal$-scheme. 

We claim that no two distinct $\Ocal$-points of $\mathcal{U}^+$ reduce to the same $\F$-point. The preimages of the $L$ (resp. $\F$) points of $\Uc^+$ in $Z^1(W_F^0/P_F,G)_L$ (resp. $Z^1(W_F^0/P_F,G)_{\F}$) are (unions of) connected components, thus it suffices to show that no two distinct points in $\Uc^+(L)$ reduce to the same point in $\Uc^+(\F)$. This follows in turn, from the statement that no two connected components of $Z^1(W_F^0/P_F,G)_L$ reduce to the same component of $Z^1(W_F^0/P_F,G)_{\F}$, which follows from Proposition 5.26 of \cite{DHKM20}, which states that $Z^1(W^0_F/P_F,G)_{\F}$ is reduced.
We can conclude that the point $\Spec\Ocal \hookrightarrow \Uc^+$ defined by $\sigma=1$ is a connected component of $\Uc^+$. It follows that $S_G$, the preimage of this point, is a connected component of $Z^1(W^0_F/P_F,G)_{\Ocal}$.

\end{proof}

We will also need the following results.

\begin{prop}\label{prop 9}
\begin{enumerate}
    \item The algebraic group $G$ acts on $S_G$ via simultaneous conjugation
$$g.(\Phi,N)=(g\Phi g^{-1},\Ad(g)N).$$
    \suspend{enumerate}
For parts 2-5, assume that $l$ is geometrically-$G$-banal.
    \resume{enumerate}
    \item The scheme $S_{G,\Ocal}$ is flat and a local complete intersection of relative dimension $\dim G$ over $\Ocal$.
   
    \item Define the second projection map $p:S_G\rightarrow \Nc_G$ as before. If $C$ is a $G_{/L}$-conjugacy class inside $\Nc_{G,L}\subseteq \Nc_G$, then the closed subscheme $X_C:=\overline{p^{-1}(C)}\subset S_G$ is a union of irreducible components and $S_G=\bigcup_C X_C$. 
    \item If in addition $G=\GL_n$, the $X_C$ are irreducible components of $S_{n,\Ocal}:=S_{\GL_n,\Ocal}$ and these can be naturally identified with partitions of $n$. For a partition $p$, we call the corresponding component $X_p$.
     \item The scheme $S_{G,\Ocal}$ is reduced.
\end{enumerate}
\end{prop}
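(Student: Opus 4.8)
The plan is to deduce reducedness of $S_{G,\Ocal}$ from the generic and special fibre cases together with flatness and the local complete intersection property already established in Proposition \ref{prop 9}(2). First I would recall that $S_{G,\Ocal}$ is flat over $\Ocal$ (part 2), so it has no embedded components supported on the special fibre and $S_{G,\Ocal}$ is reduced if and only if it is reduced at each generic point of each fibre, i.e. it suffices to check that $S_{G,\Ocal}$ is generically reduced and satisfies Serre's condition $(S_1)$. Since $S_{G,\Ocal}$ is a local complete intersection over $\Ocal$, it is Cohen--Macaulay, hence satisfies $(S_k)$ for all $k$; in particular $(S_1)$ holds and there are no embedded primes. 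So the whole problem reduces to showing $S_{G,\Ocal}$ is generically reduced, equivalently regular at each of its generic points (which, by flatness, all lie in the generic fibre $S_{G,L}$, since every irreducible component dominates $\Spec\Ocal$).

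Thus it remains to show $S_{G,L}$ is reduced (generically reduced would suffice, but reduced is what we want anyway and follows). When $L$ has characteristic zero this is exactly Corollary 5.4 of \cite{bellovin_2016}, already cited in the excerpt. When $L$ has positive characteristic $l > h_G$ and is geometrically-$G$-banal, I would instead use part 3 of Proposition \ref{prop2.4}: $S_{G,\Ocal}$ is a connected component of $Z^1(W^0_F/P_F,G)_{\Ocal}$, so $S_{G,\F}$ (equivalently $S_{G,L}$ after base change, using flatness) is a connected component of $Z^1(W^0_F/P_F,G)_{\F}$, which is reduced by Proposition 5.26 of \cite{DHKM20}. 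A connected component of a reduced scheme is reduced, so $S_{G,\F}$ is reduced; combined with $S_{G,L}$ reduced (apply the same argument over a characteristic zero point of $\Spec\Ocal$, or cite \cite{bellovin_2016}) and flatness + Cohen--Macaulayness, we conclude $S_{G,\Ocal}$ is reduced. Concretely: take any affine open, let $A$ be its coordinate ring, pick $f\in A$ nilpotent; flatness forces $f$ to restrict to $0$ in $A\otimes_\Ocal L$ (reduced) hence $f$ is $\Ocal$-torsion, but $A$ is $\Ocal$-flat so $f=0$.

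The main obstacle is really just assembling the ingredients cleanly: one must be careful that "flat + reduced generic fibre + reduced special fibre" genuinely gives reduced total space, which needs the Cohen--Macaulay (or at least $(S_1)$) input to rule out embedded primes — this is precisely why part 2 is invoked. An alternative, perhaps cleaner, route avoiding any appeal to $Z^1$: since $S_{G,\Ocal}$ is a flat local complete intersection over the DVR $\Ocal$, it is reduced as soon as it is generically reduced, and generic reducedness can be checked fibrewise; then cite \cite{bellovin_2016} for the characteristic-zero fibre and \cite{DHKM20} Proposition 5.26 (via Proposition \ref{prop2.4}(3)) for the characteristic-$l$ fibre. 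I would present the argument in this order: (i) reduce to fibrewise generic reducedness using flatness and the lci/Cohen--Macaulay property; (ii) handle the characteristic zero fibre by \cite{bellovin_2016}; (iii) handle the characteristic $l$ fibre via Proposition \ref{prop2.4}(3) and \cite{DHKM20}.
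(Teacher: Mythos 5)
Your proposal only addresses part 5 of the proposition. The statement has five parts, and parts 2, 3 and 4 carry real content that you never prove: part 2 (flatness and the local complete intersection property of relative dimension $\dim G$) is in fact \emph{used} as an input in your argument, so as a proof of the full statement this is circular — you would need to establish it first, as the paper does by combining Proposition \ref{prop2.4}(3) with Corollary 2.5 of \cite{DHKM20}; part 3 (each $X_C$ is a union of irreducible components and $S_G=\bigcup_C X_C$) needs the observation that, by flatness over $\Ocal$, the irreducible components of $S_{G,\Ocal}$ are those of the generic fibre $S_{G,L}$, together with the decomposition $\Nc_{G,L}=\bigcup_C C_L$; and part 4 (irreducibility of $X_C$ for $\GL_n$ and the indexing by partitions) requires an argument — in the paper, that $C$ is irreducible, that centralisers in $\GL_n$ are connected so $p^{-1}(C)\rightarrow C$ is smooth with irreducible fibres, hence $p^{-1}(C)$ is irreducible, plus Jordan normal form. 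None of this appears in your write-up, so the proposal as it stands proves only a fraction of the proposition.

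On part 5 itself your argument is essentially sound, though assembled differently from the paper: the paper simply notes that $S_{G,\Ocal}$ is a connected component of $Z^1(W^0_F/P_F,G)_{\Ocal}$ (Proposition \ref{prop2.4}(3)), which is reduced by Proposition 2.8 of \cite{DHKM20}, and a clopen subscheme of a reduced scheme is reduced. Your fibrewise route works, but note two points. First, your closing torsion-freeness argument (nilpotent $f$ dies in $A\otimes_{\Ocal}L$, hence is $l$-power torsion, hence zero by flatness) already suffices, making the Cohen--Macaulay/$(S_1)$ discussion unnecessary; and the parenthetical ``$S_{G,\F}$ (equivalently $S_{G,L}$ after base change)'' conflates the special and generic fibres. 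Second, $\Ocal$ is allowed to be a field or DVR of (residue) characteristic $l$, so the generic fibre need not have characteristic zero and \cite{bellovin_2016} alone does not cover it; your fallback via Proposition \ref{prop2.4}(3) and Proposition 5.26 of \cite{DHKM20} does handle this, but at that point one is very close to the paper's one-line argument anyway.
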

\begin{proof}
\begin{enumerate}
    \item This is clear.
    \item This follows from Proposition \ref{prop2.4}(3) and Corollary 2.5 of \cite{DHKM20}.
    \item As $S_{G,\Ocal}$ is flat over $\Ocal$, the irreducible components of $S_{G,\Ocal}$ are exactly those of the open subscheme $S_{G,L}$. This then follows from the proof of part 2, after noticing that $\Nc_{G,L}=\bigcup_C C_L$ as sets.
    \item Suppose $G=\GL_n$. Then $C$ is a quotient of $\GL_n$, and so is irreducible. Because centralisers inside $\GL_n$ are irreducible, the map $p^{-1}(C)\rightarrow C$ is flat with irreducible smooth fibres, and thus is smooth and open. 
    By \cite[\href{https://stacks.math.columbia.edu/tag/004Z}{Lemma 004Z}]{stacks-project}, it follows that $p^{-1}(C)$ is irreducible, and thus so is $X_C$. The final claim follows from the theory of Jordan normal forms. 
\item This follows from Proposition 2.8 of \cite{DHKM20} and Proposition \ref{prop2.4}(3).
\end{enumerate}
\end{proof}

\subsection{Lemmas in commutative algebra and algebraic geometry}

The remaining part of this section proves some lemmas from algebraic geometry and commutative algebra that we will need later.

\begin{lemma}
Let $G$ be a smooth algebraic group over a scheme $S$, and let $X$ be an $S$-scheme. Suppose that we have a morphism $m:G\times_S X\rightarrow X$ defining a group action of $G$ on $X$. Then $m$ is a smooth morphism. 
\end{lemma}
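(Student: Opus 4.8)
The plan is to exhibit $m$ as a composite of an isomorphism with a base change of the structure morphism $G\to S$, both of which are smooth. Concretely, I would introduce the \emph{shear} (or twist) morphism
\[
\psi\colon G\times_S X\longrightarrow G\times_S X,\qquad (g,x)\longmapsto (g,\,m(g,x)),
\]
which on functors of points sends $(g,x)$ to $(g,g\cdot x)$. Using the group action axioms, the morphism $(g,y)\mapsto (g,g^{-1}\cdot y)$ — built from the inversion morphism of $G$ and $m$ — is a two-sided inverse to $\psi$: the composites are $(g,x)\mapsto(g,g^{-1}\cdot(g\cdot x))=(g,x)$ and $(g,y)\mapsto(g,g\cdot(g^{-1}\cdot y))=(g,y)$ by associativity of the action and the identity axiom. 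Hence $\psi$ is an isomorphism of $S$-schemes. This verification is the only place any care is needed, and it is a routine Yoneda-lemma argument; I expect it to be the ``main obstacle'' only in the sense that one must write the inverse down correctly, not in any substantive sense.

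Next I would observe the factorisation
\[
m = \mathrm{pr}_2\circ\psi,
\]
where $\mathrm{pr}_2\colon G\times_S X\to X$ is the second projection. Indeed $\mathrm{pr}_2(\psi(g,x))=\mathrm{pr}_2(g,m(g,x))=m(g,x)$.

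Finally I would conclude by stability properties of smooth morphisms. The projection $\mathrm{pr}_2\colon G\times_S X\to X$ is the base change of the structure morphism $G\to S$ along $X\to S$, and $G\to S$ is smooth by hypothesis; since smoothness is preserved under base change, $\mathrm{pr}_2$ is smooth. The morphism $\psi$ is an isomorphism, hence smooth, and a composite of smooth morphisms is smooth. Therefore $m=\mathrm{pr}_2\circ\psi$ is smooth, as claimed.
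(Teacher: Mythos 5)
Your proof is correct and is essentially the same as the paper's: both factor $m$ as the projection $G\times_S X\to X$ (a base change of the smooth morphism $G\to S$) composed with the shear automorphism $(g,x)\mapsto(g,g\cdot x)$, whose inverse you rightly write down via inversion in $G$. No gaps; nothing further is needed.
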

\begin{proof}
First, the morphism $p_X:G\times_S X\rightarrow X$ obtained by the base change of $G\rightarrow S$ is a smooth morphism. 
Next, consider the automorphism $\phi$ of $G\times_S X$ given by $(g,x)\mapsto (g,g.x)$. As this is an isomorphism, it is smooth. 
Then, because $m=p_X\circ \phi$ is a composition of smooth morphisms, it is also smooth. 
\end{proof}

\begin{lemma}\label{lemmasmooth}
Let $f:X\rightarrow Y$ be a smooth morphism of schemes. Let $p\in X$. Then $Y$ is regular at $f(p)$ if and only if $X$ is regular at $p$. 
\end{lemma}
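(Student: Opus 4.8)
The plan is to reduce the statement to the analogous fact about local rings, using that smoothness of $f$ at $p$ is equivalent to flatness together with the geometric regularity of the fibre at $p$. The key input will be the standard result in commutative algebra that for a flat local homomorphism $A \to B$ with closed fibre $B/\mathfrak{m}_A B$, the ring $B$ is regular if and only if both $A$ and the fibre $B/\mathfrak{m}_A B$ are regular.

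First I would pass to local rings: set $A = \mathcal{O}_{Y, f(p)}$ and $B = \mathcal{O}_{X, p}$. Since $f$ is smooth, the induced map $A \to B$ is flat and local, and the fibre ring $B/\mathfrak{m}_A B = \mathcal{O}_{X_{f(p)}, p}$ is the local ring of the fibre $X_{f(p)} = X \times_Y \operatorname{Spec} \kappa(f(p))$ at $p$. Smoothness of $f$ moreover implies that this fibre is smooth over the field $\kappa(f(p))$, and in particular it is regular (smooth over a field implies regular; one does not even need perfectness since smoothness is stronger than geometric regularity). So the fibre ring is always regular, regardless of anything about $Y$.

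Now I would invoke the two directions of the commutative-algebra fact. If $Y$ is regular at $f(p)$, then $A$ is a regular local ring; combined with $B/\mathfrak{m}_A B$ regular and $A \to B$ flat, the ascent statement (e.g.\ \cite[\href{https://stacks.math.columbia.edu/tag/07NF}{Tag 07NF}]{stacks-project}, or Matsumura) gives that $B$ is regular, i.e.\ $X$ is regular at $p$. Conversely, if $X$ is regular at $p$, so $B$ is a regular local ring, then the descent statement for flat local homomorphisms forces $A$ to be regular as well (a flat local ring extension of a non-regular ring cannot be regular — this is the easier direction, following for instance from the behaviour of projective dimension or the going-down property applied to a minimal free resolution). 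Hence $Y$ is regular at $f(p)$. Combining both directions yields the equivalence.

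The only genuine subtlety — and the step I would be most careful about — is making sure the fibre local ring is regular with no hypothesis on the residue field $\kappa(f(p))$; this is exactly where one uses that $f$ is \emph{smooth} rather than merely flat, since a smooth $\kappa$-scheme is regular over any field $\kappa$ (geometrically regular, in fact), whereas flatness alone would only give us a fibre that is a complete intersection or worse. Everything else is a direct citation of the standard ascent/descent results for regularity under flat local homomorphisms, so no lengthy computation is needed.
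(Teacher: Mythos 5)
Your proposal is correct and is essentially the paper's own argument: the paper likewise reduces to the induced flat local homomorphism on stalks and invokes Theorem 23.7 of \cite{Matsumura}, which is exactly the ascent/descent of regularity along flat local maps with regular closed fibre that you use (your observation that smoothness guarantees the fibre ring is regular is the same implicit input). No gap to report.
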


\begin{proof}
After reducing the problem to local ring maps on stalks, this follows from Theorem 23.7 of \cite{Matsumura}.
\end{proof}

\begin{lemma}\label{lemma-completion} Assume that $\Ocal$ is complete. 
Let $R$ be a local $\mathcal{O}$-algebra, and assume that it is topologically of finite type with respect to the $\mathfrak{m}_R$-adic topology. Let $x$ and $\bar{x}$ be prime ideals of $R$ that give rise to the following commutative diagram. 

\begin{tikzcd}
& & R \ar[r, "x"] \ar[rd,"\bar{x}"] & \mathcal{O} \ar[d,two heads] \ar[r, hook]& L=\mathcal{O}[\frac{1}{l}]\\
& & & \mathbb{F}
\end{tikzcd}\newline
Then $$R_{\bar{x}}^{\wedge}\left[\frac{1}{l}\right]_x^{\wedge}\cong R_x^{\wedge}$$
\end{lemma}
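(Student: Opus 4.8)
The plan is to unwind both sides of the claimed isomorphism as completions of localizations and compare them using the universal properties of completion and localization together with the finiteness hypothesis on $R$. Write $A = R_{\bar x}^\wedge$ for the $\mathfrak m_{\bar x}R_{\bar x}$-adic completion of $R_{\bar x}$; since $\bar x$ is the maximal ideal $\mathfrak m_R$ here (the diagram shows $\bar x$ factors through $\F$, and $R$ is local with residue field $\F$), in fact $A = R^\wedge$ is just the $\mathfrak m_R$-adic completion of $R$. Because $R$ is topologically of finite type over the complete ring $\Ocal$, the ring $A$ is a quotient of a power series ring $\Ocal[[T_1,\dots,T_n]]$, hence Noetherian and $\mathfrak m_A$-adically complete. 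First I would record that $x\colon R\to \Ocal\hookrightarrow L$ extends continuously to $A\to \Ocal\hookrightarrow L$ (the kernel of $R\to \Ocal$ is contained in $\mathfrak m_R$, so the map is $\mathfrak m_R$-adically continuous), and that the prime of $A$ lying over $x$ — call it $\mathfrak q$ — has $A/\mathfrak q$ a subring of $\Ocal$, in particular a one-dimensional complete local domain with $A/\mathfrak q[1/l] = L$.

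**The key reduction.**

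The right-hand side $R_x^\wedge$ means the completion of $R_x$ at its maximal ideal $xR_x$; since $R/x$ embeds in $\Ocal$, $R_x$ is a local ring whose residue field is $L = \Ocal[1/l]$, and completing at the maximal ideal gives a complete local $L$-algebra. On the left, $A[1/l] = R^\wedge[1/l]$ is a Noetherian $L$-algebra (localization of the Noetherian ring $A$), and the subscript $x$ denotes localizing-then-completing at the prime $\mathfrak q[1/l]$ of $A[1/l]$, which is a maximal ideal with residue field $L$. So both sides are $\mathfrak m$-adic completions of Noetherian local $L$-algebras with residue field $L$, and the heart of the matter is to produce a map $R_x \to A[1/l]_{\mathfrak q[1/l]}$ inducing an isomorphism on completions. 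I would get this from the natural map $R \to A = R^\wedge$, which after inverting $l$ and localizing at the relevant primes gives $R_x \to A[1/l]_{\mathfrak q[1/l]}$; the claim is that this is a flat local map inducing an isomorphism on $\mathfrak m$-adic graded rings, hence an isomorphism after completion. The cleanest way is to invoke that for a Noetherian local ring $(B,\mathfrak n)$ and a prime $\mathfrak p$ of $B^\wedge$ lying over $\mathfrak n_0 := \mathfrak p \cap B$ with $\mathfrak n_0$ such that $B/\mathfrak n_0$ is complete, one has $(B_{\mathfrak n_0})^\wedge \cong (B^\wedge_{\mathfrak p})^\wedge$ — applied with $B = R[1/l]$ (or rather to $R$ directly, tracking the $l$-torsion): the point is that $R^\wedge/\mathfrak q^\wedge = \Ocal$ is already complete, so no further completion is lost when passing between $R_x$ and $(R^\wedge)_{\mathfrak q}$ after inverting $l$.

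**Executing the comparison.**

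Concretely I would argue as follows. Let $\mathfrak q\subset A$ be the prime over $x$; then $A_{\mathfrak q}$ is Noetherian local and $A_{\mathfrak q}[1/l]$ is the local ring $A[1/l]_{\mathfrak q[1/l]}$, whose completion is the left-hand side. There is a commutative square of local rings $R \to R_x$, $R \to A$, $A \to A_{\mathfrak q}$ with $R_x \to A_{\mathfrak q}[1/l]$ the induced map; I claim this last map is faithfully flat with the property that $\mathfrak m_{R_x} A_{\mathfrak q}[1/l]$ is $\mathfrak m_{A_{\mathfrak q}[1/l]}$-primary and the fibre $A_{\mathfrak q}[1/l]/\mathfrak m_{R_x} A_{\mathfrak q}[1/l]$ is a field (namely $L$, the common residue field). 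Granting this, completing along compatible ideals gives $\widehat{R_x} \cong \widehat{A_{\mathfrak q}[1/l]}$, which is exactly $R_x^\wedge \cong R_{\bar x}^\wedge[1/l]_x^\wedge$. To verify the claim, use that $A = R^\wedge$ is $R$-flat on completions and, crucially, that $A/\mathfrak q A = \Ocal$: inverting $l$ turns $\Ocal$ into the field $L$, so $\mathfrak q[1/l]$ is maximal in $A[1/l]$ with residue field $L$ matching that of $R_x$, and the map $R/x = \Ocal[1/l]\text{-data} \to A/\mathfrak q = \Ocal$ becomes an isomorphism $L \xrightarrow{\sim} L$ after inverting $l$. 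Faithful flatness of $R_x \to A_{\mathfrak q}[1/l]$ follows from flatness of $R \to \widehat R = A$ (valid since $R$ is Noetherian) localized and with $l$ inverted.

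**The main obstacle.**

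I expect the genuinely delicate point to be the bookkeeping around the prime $\mathfrak q$: one must check that there is a unique prime of $A = R^\wedge$ lying over $x$ with $A/\mathfrak q$ a subring of $\Ocal$ (a priori there could be several primes over $x$, and completion can have bad behaviour with respect to generic points), and that after inverting $l$ this $\mathfrak q[1/l]$ really is the maximal ideal whose completion appears on the left-hand side of the statement — in other words that the notation $R_{\bar x}^\wedge[1/l]_x$ unambiguously refers to localization at this specific prime. The resolution uses the hypothesis that $R$ is local with residue field $\F$ together with the fact that $A/\mathfrak q = \Ocal$ is a \emph{complete} one-dimensional local domain: completeness of $\Ocal$ is what forces $R/x \hookrightarrow \Ocal$ to have dense, hence (being a subring containing an ideal of finite colength picture) essentially full image after completion, pinning down $\mathfrak q$. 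Once the correct $\mathfrak q$ is identified, the rest is the standard fact that $\mathfrak m$-adic completion of a Noetherian local ring is flat with the same residue field, combined with the transitivity of completion; I would cite \cite{Matsumura} (Theorem 8.7 and the discussion of completions) and the stacks project for these.
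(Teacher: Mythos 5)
Your proposal is correct, and it isolates the same crucial input as the paper while finishing the argument by a different mechanism. Both proofs turn on the fact that, writing $A=R_{\bar{x}}^{\wedge}$, completeness of $\Ocal$ gives $A/xA\cong (R/x)^{\wedge}\cong\Ocal$, so that $\mathfrak{q}=xA$ is the unique prime of $A$ over $x$ and the residue field there (after inverting $l$) is $L$; this is exactly the paper's computation $[R_{\bar{x}}^{\wedge}[1/l]_x]/x\cong L$. From that point the routes diverge: the paper constructs the inclusion $R_x\hookrightarrow R_{\bar{x}}^{\wedge}[1/l]_x$ (via Krull intersection) and then invokes the Stacks Project lemma (tag 0394) to get finiteness over the complete ring $R_x^{\wedge}$, concluding by a Nakayama-type surjectivity plus a topological injectivity argument; you instead note that $l\notin\mathfrak{q}$, so the left-hand side is just $(A_{\mathfrak{q}})^{\wedge}$, and that $R_x\to A_{\mathfrak{q}}$ is a flat local map (flatness from $R\to R^{\wedge}$) whose closed fibre is the field $L$, whence $R_x/x^nR_x\cong A_{\mathfrak{q}}/\mathfrak{q}^nA_{\mathfrak{q}}$ for all $n$ and the completions agree. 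Your version avoids both the injection $R_{\bar{x}}\hookrightarrow R_{\bar{x}}^{\wedge}$ and tag 0394; the price is flatness of completion, which needs $R$ Noetherian, but the paper's proof makes the same implicit strengthening ("$R$ is of finite type over $\Ocal$"), so this is not a gap relative to the paper. Two minor repairs: the map $x$ extends to $A$ because it is \emph{local} ($x(\mathfrak{m}_R)\subseteq\mathfrak{m}_{\Ocal}$, since $\bar{x}$ is the preimage of $\mathfrak{m}_{\Ocal}$), not merely because its kernel lies in $\mathfrak{m}_R$; and the worry in your last paragraph about pinning down $\mathfrak{q}$ is settled at once by $A/xA\cong\Ocal$ being a domain --- no density argument is needed.
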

\begin{proof}
Notice that since $R\setminus x \supseteq R\setminus\bar{x}\cup{\{\frac{1}{l}\}}$, that $R_{\bar{x}}\left[\frac{1}{l}\right]_x\cong R_x$.
As $R$ is Noetherian, we have $\bigcap_{n}\bar{x}^n=0$, and thus we have an injection $R_{\bar{x}}\hookrightarrow R_{\bar{x}}^{\wedge}$. This gives us a local homomorphism inclusion 
$$R_x=R_{\bar{x}}\left[\frac{1}{l}\right]_x\hookrightarrow R_{\bar{x}}^{\wedge}\left[\frac{1}{l}\right]_x.$$
We notice that $R_x/x\cong L$, so that
\[\left[R_{\bar{x}}^{\wedge}\left[\frac{1}{l}\right]_x\right]/x\cong \left[\varprojlim_n (R/\bar{x}^n)/x\right][1/l]\cong \varprojlim_n \big(R/(x,l^n)\big)[1/l]\cong (\varprojlim\Ocal/l^n)[1/l]=L,\]
the last equality arising because $\Ocal$ is complete.
Thus, by \cite[\href{https://stacks.math.columbia.edu/tag/0394}{Lemma 0394}]{stacks-project}, we have that 
$R_{\bar{x}}^{\wedge}\left[\frac{1}{l}\right]_x^{\wedge}$
 is the completion of $R_{\bar{x}}^{\wedge}\left[\frac{1}{l}\right]_x$ under the $x$-adic topology arising from $R_x$, and is a finite $R_x^{\wedge}$-module. 
 It follows that the map
$$R_x^{\wedge}\rightarrow R_{\bar{x}}^{\wedge}\left[\frac{1}{l}\right]_x^{\wedge}$$
is an injection, and induces an isomorphism modulo $x$. If $C$ is the cokernel (which we now know to be a finite $R_x^{\wedge}$-module) then we see $xC=C$, and so Nakayama's Lemma shows us that $C=0$, implying that the map is an isomorphism. 
\end{proof}

\section{Smoothness results for $X_C$}

The goal of this section is to prove Theorem \ref{Theorem: mainthm}. Let $G$ be a connected reductive group over $\Ocal$ and $S_{G,\Ocal}$ as before. As each map $X_C\rightarrow \Spec(\Ocal)$ is flat, we can (and do) reduce the problem to the case $\Ocal=\F$ is a field of characteristic $0$ or $l$ (see \cite[\href{https://stacks.math.columbia.edu/tag/01V8}{Lemma 01V8}]{stacks-project}). Since smoothness is an fpqc-local property, we can assume that $\F$ is algebraically closed. We make these assumptions throughout this section.

\subsection{Associated cocharacters}

In what follows, we will require a little bit of set-up, notation, and knowledge of Bala-Carter theory. Let $G$ be a connected reductive group over an algebraically closed field $\F$ with Lie algebra $\gf$ and let $C\subseteq \Nc_G$ be a nilpotent orbit.
In what follows, we restrict to the case where the derived subgroup of $G$ is (almost) simple. When $G$ is of adjoint type, we can do this because then $G=\prod_i  G_i$ for $G_i$ almost simple, adjoint, and $S_{G.\Ocal}\cong \prod_i{S_{G_i,\Ocal}}$. If $G$ is not of adjoint type, then $S_{G,\Ocal}\rightarrow S_{G^{ad},\Ocal}$ is a $Z(G)$-torsor, and since $Z(G)$ is smooth (under our considerateness condition), any smoothness result translates between the cases for $G$ and $G^{ad}$. 


Let $\tilde{T}$ be a maximal torus of $G$ and $\Pi$ the set of roots. Let $e\in C\subseteq \gf$.
Let $L\subseteq G$ be a Levi subgroup of $G$, minimal subject to $e\in\Lie(L)$. Let $Z_L$ be the centre of $L$. 
Following Definition 2.8 of \cite{FowRoh08} (see also Section 2.3 of \cite{Premet03}), the nilpotent element $e$ is called distinguished in $L$ if $Z_L^{\circ}$ is a maximal torus of the centraliser $C_L(e)$ of $e$ in $L$.
By Proposition 2.11 of \cite{FowRoh08}, there is an associated cocharacter $\lambda:\mathbb{G}_m\rightarrow \tilde{T}$ such that $\Ad(\lambda(t)).e=t^2e$ and $\im(\lambda)\subseteq [L,L]$.

The group $\G_m$ acts on $\Lie(G)$ through $\Ad\circ\lambda:\G_m\rightarrow \Aut(\gf)$, which gives a decomposition
\[\gf=\bigoplus_{i\in\Z}\gf(\lambda,i).\]
Through Lemma 5.6.5 of \cite{Carter85(93)} and the preceding discussion, we choose a base of simple roots $\Delta\subseteq \Pi$ such that $\langle\alpha,\lambda\rangle\geq 0$ for all $\alpha\in\Delta$. Call the corresponding Borel subgroup $B$.

We define a parabolic subgroup $P_{\lambda}\subseteq L$ such that $\Lie(P_{\lambda})=\bigoplus_{i\geq 0}\gf_L(\lambda,i)$ where each $\gf_L(\lambda,i)=\gf(\lambda,i)\cap \Lie(L)$. 
We note that $P_{\lambda}$ has a Levi decomposition $P_{\lambda}=M_{\lambda}U_{\lambda}$ with $\Lie(U_{\lambda})=\bigoplus_{i>0}\gf_L(\lambda,i)$ and $\Lie(M_{\lambda})=\gf_L(\lambda,0)$. We say $P_{\lambda}$ is a distinguished parabolic subgroup in $L$ if $\dim\gf_L(\lambda,0)=\dim(\gf_L(\lambda,2))+\dim(Z_L)$, and it is a result of \cite{Premet03} (Proposition 2.5) that $e$ is distinguished if and only if $P_{\lambda}$ is distinguished. 

The primary result of Bala-Carter theory is that there is a bijection between the adjoint orbits of $\Nc_G$ and pairs $(M,P)$ where $M$ is a Levi subgroup of $G$, and $P$ is a distinguished parabolic subgroup of $M$.

\subsection{The smoothness result for irreducible components corresponding to distinguished nilpotent orbits}

This subsection contains the main result of the paper.

\begin{lemma}\label{Lem:unipotent}
Assume $e\in C$ is a distinguished nilpotent element with $\lambda$ an associated cocharacter. Let $C_G(e)$ be the centraliser of $e$ in $G$ with Levi decomposition $C_G(e)=MR$ where $R=R_u(C_G(e))$ is unipotent and $M$ is reductive. Suppose that $t\in\G_m$ is sufficiently generic so that $\gf^{\Ad(\lambda(t))}=\gf(\lambda,0)$. Then every element of $C_G(e)\lambda(t)$ is conjugate to an element of $M\lambda(t)$.
\end{lemma}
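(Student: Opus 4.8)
The plan is to use the standard fact that the centraliser $C_G(e)$ has a Levi decomposition $C_G(e) = MR$ with $R = R_u(C_G(e))$ the unipotent radical, and that by Bala--Carter / Jantzen theory the associated cocharacter $\lambda$ normalises $C_G(e)$, acts on $R$ with strictly positive weights (so $\Ad(\lambda(t))$ contracts $R$ to the identity as $t\to 0$ or $t\to\infty$ along a suitable direction), and acts on $\mathrm{Lie}(M) = \gf(\lambda,0) \cap \gf^e$ with weight $0$. The point is that for generic $t$, $\Ad(\lambda(t))$ acts on $\mathrm{Lie}(R) = \bigoplus_{i>0}\mathrm{Lie}(R)_i$ with all eigenvalues $t^i$, $i>0$, none of which equal $1$; this is exactly the hypothesis $\gf^{\Ad(\lambda(t))} = \gf(\lambda,0)$, which forces $\mathrm{id} - \Ad(\lambda(t))$ to be invertible on $\mathrm{Lie}(R)$.

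First I would fix an element $xr\lambda(t) \in C_G(e)\lambda(t)$ with $x \in M$, $r \in R$, and look for $g \in R$ (or more generally $g \in C_G(e)$) with $g (xr\lambda(t)) g^{-1} \in M\lambda(t)$. Conjugating by $g \in R$ and using that $M$ normalises $R$ and $\lambda(t)$ normalises both $M$ and $R$, one computes $g(xr\lambda(t))g^{-1} = x \cdot (x^{-1}gx) r (\lambda(t) g^{-1}\lambda(t)^{-1}) \cdot \lambda(t)$, and the $R$-component that must be killed is $(x^{-1}gx)\, r\, \Ad(\lambda(t))(g)^{-1}$, an element of the unipotent group $R$. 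So the task reduces to solving, in $R$, the equation $(\mathrm{conjugation-twisted})(g) = g^{-1}$ up to the factor $r$ — concretely, finding $g \in R$ with $g' \cdot r = \Ad(\lambda(t))(g)$ where $g' = x^{-1}gx$ depends linearly-ish on $g$ after passing to the associated graded. The cleanest route is to filter $R$ by the descending central series (or the $\lambda$-weight filtration) and solve for $g$ degree by degree: at each graded piece the equation becomes linear of the form $(\text{id} - \Ad(\lambda(t)) + (\text{lower-order correction from } x))(g) = r$ modulo higher filtration, and invertibility of $\mathrm{id} - \Ad(\lambda(t))$ on each graded piece of $\mathrm{Lie}(R)$ (guaranteed by the genericity of $t$) lets us solve it, then induct up the filtration. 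Since $R$ is a unipotent group of finite dimension, this terminates.

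An alternative, more conceptual packaging: consider the morphism $R \to R$, $g \mapsto g\,(xr\lambda(t))\,g^{-1}(x\lambda(t))^{-1}$ (landing in $R$ since the $M$- and $\lambda$-parts cancel); its differential at the identity is $\mathrm{id} - \mathrm{Ad}(x\lambda(t))$ restricted to $\mathrm{Lie}(R)$, which is invertible because $x \in M \subseteq C_G(e)^{\mathrm{red}}$ commutes with a maximal torus and the $\lambda(t)$-eigenvalues on $\mathrm{Lie}(R)$ avoid $1$ — more carefully, $\mathrm{Ad}(x)$ preserves the $\lambda$-weight grading and the eigenvalues of $\mathrm{Ad}(x\lambda(t))$ on the weight-$i$ piece are $t^i$ times roots of unity (eigenvalues of a semisimple-part action), still $\neq 1$ for generic $t$. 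Hence this map is étale at the identity, and in the unipotent setting one upgrades étale-at-a-point to surjectivity (e.g. because $R$ is an affine space and the map is a bijection on the level of graded pieces, or by a fixed-point / contraction argument), giving a $g$ with $g(xr\lambda(t))g^{-1} = x\lambda(t) \in M\lambda(t)$.

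The main obstacle I anticipate is the bookkeeping around the $\mathrm{Ad}(x)$-correction term: one must verify that $\mathrm{id} - \mathrm{Ad}(x\lambda(t))$ (not merely $\mathrm{id} - \mathrm{Ad}(\lambda(t))$) is invertible on $\mathrm{Lie}(R)$, which requires knowing that $\mathrm{Ad}(x)$ acts on $\mathrm{Lie}(R)$ by an operator whose eigenvalues are roots of unity (equivalently, that the semisimple part of $x$, lying in a torus of $M$ centralising $\lambda$, acts with finite-order eigenvalues, or at least that for generic $t$ the product eigenvalues dodge $1$) — handling the non-semisimple part of $x$ requires the filtration argument rather than a naive eigenvalue count. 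A secondary technical point is ensuring the passage from "the conjugating map is étale/bijective on graded pieces" to "it is actually surjective onto $R$", which is standard for unipotent groups but should be stated with the right reference (e.g. filtering $R$ and lifting solutions step by step, each step being a torsor under an affine space on which the relevant linear operator is invertible).
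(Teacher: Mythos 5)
Your strategy is genuinely different from the paper's: you write an element as $xr\lambda(t)$ with $x\in M$, $r\in R$ and try to kill $r$ by conjugating with $g\in R$, i.e.\ you need surjectivity of a twisted Lang map on $R$, whose linearisation on each graded piece is $\id-\Ad(x\lambda(t))$. The gap is exactly the point you flagged, and it is not bookkeeping that the filtration argument can absorb. Since $x\in M\subseteq C_G(\lambda)$, the operator $\Ad(x)$ preserves the $\lambda$-grading of $\Lie(R)=\bigoplus_{i>0}\gf(\lambda,i)\cap\gf^e$, so there is no ``lower-order correction'': the graded obstruction is precisely whether $\Ad(x\lambda(t))$ has a nonzero fixed vector in $\Lie(R)$. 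Also, your worry about the non-semisimple part of $x$ is moot: because $e$ is distinguished, $M^{\circ}=Z_G^{\circ}$, and $l>h_G$ forces $l\nmid|\pi_0(C_G(e))|$, so every $x\in M$ is semisimple and $\Ad(x)$ has finite order. The real problem is that for every \emph{non-regular} distinguished orbit we have $\pi_0(C_G(e))\neq 1$, so $\Ad(x)$ can act on $\Lie(R)$ with nontrivial roots of unity $\zeta$, and the stated hypothesis on $t$ (only $t^i\neq1$ for the nonzero weights of $\lambda$ on $\gf$) does not exclude coincidences $\zeta t^i=1$ — e.g.\ $t^{di}=1$ with $d$ the order of $\zeta$ and $di$ beyond the weight range is perfectly possible in the characteristic-$l$ fibre, where $t$ has finite order. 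Worse, if $\Ad(x\lambda(t))$ fixes some $0\neq n\in\Lie(R)$, your intended conclusion fails outright for $r=\exp(n)$: the element $x\exp(n)\lambda(t)$ then has nontrivial unipotent Jordan part, whereas every element of $M\lambda(t)$ is semisimple, so it is not conjugate (in $G$, let alone by $R$) to anything in $M\lambda(t)$. So the missing invertibility cannot be recovered from the stated hypothesis; you would have to strengthen the genericity so that $\Ad(x\lambda(t))$ is fixed-point free on $\Lie(R)$ for all $x\in M$, after which your étale/filtration step is indeed standard and fine.

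For comparison, the paper does not try to preserve the $M$-component at all. It takes $g\in\lambda(t)C_G(e)$, uses the abstract Jordan decomposition $g=su$, conjugates $s$ into $\tilde{T}$ by an element of $C_G(e)$ so that $xsx^{-1}\in\lambda(t)(\tilde{T}\cap M)$, and then argues that the unipotent part $xux^{-1}$, being a unipotent element of $C_G(e)$ lying in $C_G(\lambda(t)s')\subseteq C_G(\lambda(t))=M_{\lambda}$, must lie in $M$ and hence equal $1$ (since $M$ has no nontrivial unipotents); so every element of $\lambda(t)C_G(e)$ is in fact semisimple and $C_G(e)$-conjugate into $\lambda(t)(\tilde{T}\cap M)$. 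Note that the containment $C_G(\lambda(t)s')\subseteq C_G(\lambda(t))$ is where the paper's genericity is spent, and it is of the same eigenvalue-avoidance nature as the condition you need (the values $\alpha(s')$ are roots of unity that must not conspire with powers of $t$); so if one reads ``sufficiently generic'' strongly enough to guarantee this, both routes close, and yours even gives the sharper statement that the conjugating element can be taken in $R$ with the same $M$-component — as written, though, your proposal leaves its key invertibility unproved, and for non-regular distinguished orbits it does not follow from the hypothesis you are allowed to use. (For the regular orbit, where $\pi_0(C_G(e))=1$ and $\Ad(x)$ is trivial on $\Lie(R)$, your argument is complete.)
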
 

\begin{proof}

Since $e$ is distinguished, Theorem A of \cite{Premet03} tells us that $M=M_{\lambda}\cap C_G(e)$ and $R=U_{\lambda}\cap C_G(e)$. Further, following Definition 2.8 of \cite{FowRoh08}, the maximal torus of $M$ is $Z_G^{\circ}$ so that $M/Z_G^{\circ}$ is a rank $0$ reductive group; ergo finite. Thus, all unipotent elements of $C_G(e)$ lie in $R$. 
Let $g\in\lambda(t)C_G(e)\subseteq \tilde{T}C_G(e)$ have abstract Jordan decomposition $g=su$ with $s$ semisimple and $u$ unipotent. As $s$ is semisimple, it lies in a maximal torus $T'$ of $\tilde{T}C_G(e)$ and so there is some $x\in \tilde{T}C_G(e)$ such that $xsx^{-1}\in \tilde{T}$. In fact, we can assume without loss of generality that $x\in C_G(e)$ because $\tilde{T}$ is abelian. Then, because $\lambda(t)$ normalises $C_G(e)$, we see that $xgx^{-1}\in \lambda(t)C_G(e)$. Since $u$ is unipotent in $\tilde{T}C_G(e)$ we obtain $u\in C_G(e)$ and thus, we get 
\[xsx^{-1}=xgx^{-1}(xux^{-1})^{-1}\in\lambda(t)C_G(e)\cap \tilde{T}.\]
Hence 
\[xsx^{-1}\in \tilde{T}\cap \lambda(t)C_G(e)=\tilde{T}\cap \tilde{T}M\cap \lambda(t) C_G(e)=\lambda(t)\tilde{T}\cap \lambda(t)M=\lambda(t)[\tilde{T}\cap M].\]
Then $xux^{-1}\in C_G(xsx^{-1})=C_G(\lambda(t)s')$ for some $s'\in \tilde{T}\cap M$. By the genericity condition on $t$, we see that $C_G(\lambda(t))=M_{\lambda}$ and so (by Theorem 3.5.3 of \cite{Carter85(93)}) $C_G(\lambda(t)s')\subseteq C_G(\lambda(t))=M_{\lambda}$. Hence $xux^{-1}\in M$ and $xux^{-1}=1$. The result follows. 
\end{proof}
We are now equipped to prove the smoothness result:
\begin{theorem}\label{theorem-smooth}
    Let $G_{/\Ocal}$ be a connected reductive group with centre $Z$, let $\gf$ be the Lie algebra of $G$, and suppose $q\in \Ocal$ is considerate towards $G$ over $\Ocal$. Suppose $C\subseteq \Nc_{G,L}$ is either $0$ or a distinguished nilpotent adjoint orbit. Then $X_C$ is smooth over $\Ocal$ and there is a bijection between the connected components of $X_C$ and the set of $\Phi_0$-twisted conjugacy classes of the group $\pi_0(C_G(e))$.
\end{theorem}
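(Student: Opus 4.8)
The plan is to analyze the scheme $X_C = \overline{p^{-1}(C)}$ by first understanding the fiber $p^{-1}(e)$ over a fixed nilpotent element $e \in C$, then using the $G$-action to sweep this out. Since $C \cong G/C_G(e)$ and $p^{-1}(C) \to C$ is a $G$-equivariant fibration (by Proposition \ref{prop 9}, after noting $C$ is smooth and the relevant centralisers behave well in the distinguished case), it suffices by \'etale descent along $G \to C$ to prove that the fiber $p^{-1}(e) = \{\Phi \in G : \Ad(\Phi)e = qe\}$ is smooth, and then count its connected components in a $\pi_0(C_G(e))$-equivariant way. The fiber $p^{-1}(e)$ is a torsor-like object: if $\Phi_0$ is one solution of $\Ad(\Phi_0)e = qe$, then $\Phi_0$ can be taken to be $\lambda(q^{1/2})$ composed with a suitable element — more precisely, using the associated cocharacter $\lambda$ (which satisfies $\Ad(\lambda(t))e = t^2 e$) we have $\Ad(\lambda(t))e = qe$ when $t^2 = q$, and considerateness guarantees $q$ has a square root behaving well enough (or one works directly with the grading). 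Then $p^{-1}(e) = C_G(e) \cdot \Phi_0$, a right coset translate of the centraliser, hence abstractly isomorphic to $C_G(e)$ as a scheme — in particular smooth. This gives part of the smoothness claim; the subtlety is that $X_C$ is the \emph{closure} of $p^{-1}(C)$, so I must check $p^{-1}(C)$ is already closed (equivalently, is a union of connected components of $S_G$), which should follow from the distinguished hypothesis: distinguished orbits are not in the closure of other orbits in a way that would glue components together, combined with the dimension count from the local complete intersection property (Proposition \ref{prop 9}(2)) forcing $p^{-1}(C)$ to be open as well as closed.

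The next step is the count of connected components. Since $p^{-1}(e) \cong C_G(e)\Phi_0$, its connected components are a torsor under $\pi_0(C_G(e))$. To pass from $\pi_0(p^{-1}(e))$ to $\pi_0(X_C)$ I must account for the identification induced by the $G$-action: two points in different fibers $p^{-1}(e)$ and $p^{-1}(ge g^{-1})$ lie in the same component of $X_C$ exactly when they differ by the $G$-action, and the stabilizer structure means the components of $X_C$ are the quotient of $\pi_0(C_G(e))$ by the action of $C_G(e)$ on itself — but this action is by $\Phi_0$-twisted conjugation, since translating $\Phi_0 c$ by $x \in C_G(e)$ gives $x \Phi_0 c x^{-1} = \Phi_0 (\Phi_0^{-1} x \Phi_0) c x^{-1}$, i.e. $c \mapsto (\Phi_0^{-1} x \Phi_0) c x^{-1}$, which on $\pi_0$ is precisely $\Phi_0$-twisted conjugation (here $\Phi_0$ normalises $C_G(e)$ since $\Phi_0 \in \tilde T C_G(e)$ — this is where Lemma \ref{Lem:unipotent} enters, controlling the conjugacy within $M\lambda(t)$). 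So the bijection with $\Phi_0$-twisted conjugacy classes of $\pi_0(C_G(e))$ drops out of carefully tracking the groupoid $[p^{-1}(e)/C_G(e)] \simeq [X_C/G]$.

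For the $C = 0$ case, $p^{-1}(0) = G$ itself (every $\Phi$ satisfies $\Ad(\Phi)\cdot 0 = 0$), and $C_G(0) = G$, so $X_0 = G$ is smooth and connected, matching $\pi_0(C_G(e)) = \pi_0(G) = 1$; one just checks this degenerate case is consistent with the general formula, with $\Phi_0 = 1$ and twisted conjugation trivial on $\pi_0$. I also need to address flatness/smoothness \emph{over $\Ocal$} rather than just over $L$: since we reduced to the case $\Ocal = \F$ algebraically closed at the start of the section, and the coset description $p^{-1}(e) \cong C_G(e)$ is characteristic-uniform given $l > h_G$ (so that $\lambda$, $\exp$, $\log$ all behave), smoothness of the special fiber plus flatness (Proposition \ref{prop 9}(2)) yields smoothness over $\Ocal$.

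The main obstacle I anticipate is not the smoothness — the coset description makes that clean — but rather the precise verification that $p^{-1}(C)$ is closed (so that $X_C = p^{-1}(C)$ with no extra boundary), and the bookkeeping to show the $G$-action identifies fiber-components exactly via $\Phi_0$-twisted conjugation and nothing more; this requires knowing that no two \emph{non-twisted-conjugate} components accidentally merge, which is where the distinguished hypothesis (via Lemma \ref{Lem:unipotent} and the Bala-Carter structure of $C_G(e) = MR$) does the real work, since for non-distinguished orbits the extra torus directions in $C_G(e)$ destroy this rigidity — consistent with part (2) of Theorem \ref{Theorem: mainthm} asserting singularity there.
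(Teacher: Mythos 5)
There is a genuine gap, and it sits exactly where you flagged the difficulty: your claim that $p^{-1}(C)$ is closed (so that $X_C=p^{-1}(C)$ with no boundary) is false for every nonzero orbit, distinguished or not. If $(\Phi,N)\in p^{-1}(C)$ then $(\Phi,tN)$ satisfies $\Ad(\Phi)(tN)=q(tN)$ for all $t$, and since a nilpotent orbit is stable under nonzero scaling (over an algebraically closed field, via the associated cocharacter $\Ad(\lambda(s))e=s^2e$), the points $(\Phi,tN)$ lie in $p^{-1}(C)$ for all $t\neq 0$; letting $t\to 0$ shows $(\Phi,0)\in \overline{p^{-1}(C)}\setminus p^{-1}(C)$. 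So $X_C$ always contains boundary points lying over smaller orbits, and the entire content of the theorem is smoothness at those points: your fibration over $C$ with fibres the cosets $C_G(e)\Phi_0$ only proves smoothness of the open stratum $p^{-1}(C)$, which holds for \emph{every} orbit and cannot see the distinguished hypothesis, and no dimension count from the lci property will make $p^{-1}(C)$ closed. Indeed, the companion Theorem \ref{theorem -notsmooth} exhibits the singularity of $X_C$ for non-distinguished $C$ precisely at such a boundary point $(\Phi_0,0)$. Relatedly, considerateness plays no genuine role in your argument (you invoke it only to extract $\sqrt{q}$, which comes from algebraic closedness), whereas it is the crucial hypothesis: it is what forces the $q$-eigenspace of $\Ad(\Phi_0)$ in $\gf$ to be exactly $\gf(\lambda,2)$, which is how the boundary is controlled.

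For comparison, the paper handles the boundary by fibering over the $\Phi$-coordinate rather than the $N$-coordinate: it introduces the smooth closed subscheme $Y=M\Phi_0\times\gf(\lambda,2)\subseteq X_C$ (which already contains the degenerate points with small $N$), uses Lemma \ref{Lem:unipotent} to show the conjugation map $f:G\times Y\to X_C$ is surjective, and proves $f$ smooth by showing that the square over $G\times M\Phi_0\to\coprod_s Z^{\circ}G_{s\Phi_0}$ is Cartesian --- the Cartesian property is exactly where considerateness is used, via the identification of the $q$-eigenspace of $\Ad(\Phi_0)$ with $\gf(\lambda,2)$ --- after which Lemma \ref{lemmasmooth} gives smoothness of $X_C$. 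In effect each $X_C^s\to Z^{\circ}G_{s\Phi_0}$ is a vector bundle with fibre $\ker(\Ad(\Phi)-q)$, so the boundary is built in from the start; the component count is then quoted from Theorem 2.5 of \cite{Jackcomponents}. Your $\Phi_0$-twisted-conjugation bookkeeping is the right heuristic for why $\pi_0(C_G(e))$ appears, but as written it only computes components of the open stratum and would need to be redone on all of $X_C$.
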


\begin{proof}
    Consider first the case $C=0$. Then $X_C=\big\{(\Phi,0)\in S_{G,\Ocal}\big\}\cong G$ via the map projecting to the $\Phi$ coordinate. Since $G$ is smooth, this proves the theorem. 

Keep the notation of before, with $e$ a distinguished nilpotent element of $\gf$ with associated cocharacter $\lambda:\G_m\rightarrow T$. After making some choice of $\sqrt{q}\in \F$, set $\Phi_0=\lambda(\sqrt{q})$. 
    The centraliser $C_G(e)$ exhibits a Levi decomposition $C_G(e)=MR$ with $M$ reductive and $R$ the unipotent radical. 
    Because $e$ is a distinguished nilpotent element in $\gf$, we see that $Z^{\circ}$, the connected component of the identity of the centre $Z$ of $G$, is a maximal torus of $M$ by Proposition 2.11 (iii) of \cite{FowRoh08}. Since $M/Z^{\circ}$ is a split reductive group of rank $0$, it is finite; because unipotent radicals are connected, it is 
    isomorphic to the component group $A(e)$ of the centraliser $C_G(e)$. 
We choose a set of representatives $S$ such that $M=\coprod_{s\in S} sZ^{\circ}$.

    Define $Y=M\Phi\times \gf(\lambda,2)$. Since the characteristic $l>h$, we see that $M$ is a smooth subgroup of $G$ making $Y$ a smooth $\F$-scheme. It is clear that $Y$ is a closed subscheme of $S_G$ because if $(m\Phi_0,N)\in M\Phi_0 \times \gf(\lambda,2)$ 
then
\begin{align*}
    \Ad(m\Phi_0).N 
    &=\Ad(m)\Ad(\Phi_0).N \\
    &=\Ad(m).qN \\
    &=qN.
\end{align*}
In fact, $Y$ lies inside the irreducible component $X_C$. The distinguished element $e$ lies in the unique open dense $P_{\lambda}$-orbit inside $\gf(2,\lambda)$ (See, for example, Proposition 5.8.7 b) of \cite{Carter85(93)}) and thus $P_{\lambda}.\left[M\Phi_0\times \{e\}\right]$ is a dense open subscheme of $Y$. As $M\Phi_0\times \{e\}\subseteq p^{-1}(C)$, we obtain $P_{\lambda}.\left[M\Phi_0\times \{e\}\right]\subseteq p^{-1}(C)$ and consequently $Y\subseteq X_C$. 
    Define the morphism
\begin{align*}
    f:G\times Y &\rightarrow X_C \\
    (g,(\Phi,N)) &\mapsto (g\Phi g^{-1},\Ad(g)N).
\end{align*}
As $G\times Y$ is a smooth variety, Lemma \ref{lemmasmooth} implies the theorem provided we can show that $f$ is a smooth surjective morphism. 

We note that surjectivity is equivalent to the statement `every pair $(\Phi,N)\in X_C$ is conjugate to a pair in $Y$'. To prove this, it is sufficient to show that $\Phi$ is conjugate to an element of $M\Phi_0$ whenever $(\Phi,N)\in X_C$.
As there is some $\Ad(g).e\in \gf$, upon which $\Phi$ acts as a multiplication by $q$, it is clear that $\Phi$ is conjugate to some element of $C_G(e)\Phi_0$. So, by Lemma \ref{Lem:unipotent}, we see that $\Phi$ is conjugate to an element of $M\Phi_0$. This proves surjectivity.

We now proceed to prove that $f$ is smooth. Consider the following commutative diagram
\[
\begin{tikzcd}
G\times Y\arrow{r}{f} \arrow[d] & X_C \arrow[d] \\
G\times M\Phi_0 \arrow{r} & G
\end{tikzcd}
\]
where the vertical maps come from the ``forget $N$" projections $(g,m\Phi_0,N)\in G\times Y\mapsto (g,m\Phi_0)\in G\times M\Phi_0$ and $(\Phi,N)\in X_C\mapsto \Phi\in G$ respectively, and the horizontal maps are defined via the conjugation action of $g\in G$ on $Y$ so that the diagram commutes.
Choosing a set of representatives $S$ of $M/Z^{\circ}$, the map $G\times M\Phi_0\rightarrow G$ factors through
\[\begin{tikzcd}
G\times M\Phi_0\arrow{r} \arrow[d, equal] & G \\
\coprod_{s\in S} G\times sZ^{\circ}\Phi_0 \arrow{r}{m} & \coprod_s Z^{\circ}G_{s\Phi_0} \arrow[u]
\end{tikzcd}\]
where $G_{s\Phi_0}$ denotes the conjugacy class of $s\Phi_0$ in $G$.
Note that each $Z^{\circ}G_{s\Phi_0}$ defines a locally closed subvariety of $G$. If any two subschemes $Z^{\circ}G_{s\Phi_0}$ and $Z^{\circ}G_{t\Phi_0}$ intersect in $G$, say $x\in Z^{\circ}G_{s\Phi_0}\cap Z^{\circ}G_{t\Phi_0}$, then 
\[z_1g_1s\Phi_0g_1^{-1}=x=z_2g_2t\Phi_0g_2^{-1}\]
implies that $t\Phi_0=(z_1z_2^{-1})(g_2^{-1}g_1) s\Phi_0 (g_2^{-1}g_1)^{-1}$; and thus leads us to $Z^{\circ}G_{s\Phi_0}=Z^{\circ}G_{t\Phi_0}$ as locally closed subschemes of $G$. Thus, by possibly restricting to a subset $S'$ of the set of representatives $S$ if necessary, we can view $\coprod_{s\in S'} Z^{\circ}G_{s\Phi_0}$ as a locally closed subscheme of $G$.

Because the map $f:G\times Y\rightarrow X_C$ is surjective, the map $X_C \rightarrow G$ also factors through $\coprod_{s\in S'} Z^{\circ}G_{s\Phi_0}$, giving us the commutative diagram:
\begin{equation}\label{commutative diagram}
\begin{tikzcd}
G\times Y\arrow{r}{f} \arrow[d] & X_C \arrow[d] \\
G\times M\Phi_0 \arrow{r}{m} & \coprod_{s\in S'} Z^{\circ}G_{s\Phi_0}
\end{tikzcd}
\end{equation}

We claim that this is a pullback square. 
Since $e$ is distinguished, we see
$\gf(\lambda,1)=0$ and thus each simple root $\alpha$ has its corresponding character eigenspace $\gf_{\alpha}$ either contained inside $\gf(\lambda, 0)$ or $\gf(\lambda, 2)$. 
Hence, as every positive root is the sum of at most $h-1$ simple roots (where $h=h_G$ is the Coxeter number of $G$), we see that 
    \[\{i\in\Z:\gf(\lambda,i)\neq 0\}\subseteq 2\Z\cap [-2h+2,2h-2].\]
Thus, given that $q$ is considerate towards $G_{/\F}$, it follows that the subspace of $\gf$ upon which $\Phi_0=\lambda(\sqrt{q})$ acts as multiplication by $q$ is precisely $\gf(\lambda,2)$. 

Thus, if $(g,\Phi)\in G\times M\Phi_0$ and $(\Phi',N')\in X_C$ such that $g\Phi g^{-1}=\Phi'$, then $\Ad(\Phi)(\Ad(g^{-1})N')=q\Ad(g^{-1})N'$ 
and $\Ad(g^{-1})N'\in \gf(\lambda,2)$ by the previous discussion. So the morphism 
\[((g,\Phi), (\Phi',N'))\rightarrow (g,(\Phi, \Ad(g^{-1})N'))\]
gives an inverse to the natural morphism $G\times Y\rightarrow (G\times M\Phi_0)\times_{\coprod_s Z^{\circ}G_{s\Phi_0}}X_C$. This shows the above commutative diagram is a pullback square.

Now, by the theory of homogeneous spaces, the bottom map $m$ is flat with fibres isomorphic to $\Stab_G(\Phi_0)$, which are smooth group schemes. This shows that $m$ is smooth. Hence, since the map $f$ is the base change of $m$ to $X_C$, by Proposition 10.1 of \cite{Hartshorne} we see that $f$ is smooth. We can thus conclude $X_C$ is smooth over $\F$.

The statement on the number of connected components is Theorem 2.5 of \cite{Jackcomponents}, and is included for completeness.
\end{proof}

\begin{remark}
    A question arises regarding the generality and necessity of the considerateness condition. That is, when exactly is $q$-considerateness a necessary condition for smoothness? As one can see in the proof, we used considerateness to prove that $G\times Y$ was the pullback of the diagram
    \[
\begin{tikzcd}
G\times Y\arrow[r]{f} \arrow[d] & X_C \arrow[d] \\
G\times M\Phi_0 \arrow[r]{m} & M.G_{\Phi_0}
\end{tikzcd}
\]
  arising from the fact that $\{N\in \gf: \Ad(\Phi_0)N=qN\}=\gf(\lambda,2)$. When $C$ is the \emph{regular} nilpotent orbit, then $\gf(\lambda,i)\neq 0$ for every $i\in [-2h+2,2h-2]\cap 2\Z$, so we see that $q$-considerateness is precisely the condition that $\{N\in \gf: \Ad(\Phi_0)N=qN\}=\gf(\lambda,2)$. When $C$ is distinguished and non-regular, we have $\gf(\lambda,2h-2)=0$. Hence, there is some $r<h$, depending on the distinguished orbit $C$, such that whenever $q\in \Ocal$ has the property that $1,q,...,q^r$ are distinct, then $X_C$ is smooth via the above proof. This $r$ can always be taken to be 
  \[r=1+\max\{i:\gf(\lambda,2i)\neq 0\}.\]
\end{remark}

\begin{remark}
    Let $X_C^s$ be the image of $f$ restricted to $G\times sZ^{\circ}\Phi_0$ (so that $X_C^s$ is an irreducible component of $X_C$).
    Another way of interpreting the Cartesian diagram \ref{commutative diagram} is that $X^s_C\rightarrow Z^\circ G_{s\Phi_0}$ is the total space of the vector bundle on $Z^\circ G_{s\Phi_0}$ whose fibre above $\Phi$ is $\ker(\Ad(\Phi)-q)\subseteq \gf$. In particular, we obtain the corollary:
    
\end{remark}

\begin{corollary}
    If $C$ is a distinguished orbit, and $s,\Phi_0$ are as in the proof of Theorem \ref{theorem-smooth} then $X^s_C$ is described as the closed subscheme of $G\times \gf$ cut out by the equations:
    \begin{itemize}
        \item $\Ad(\Phi)N=qN$;
        \item any set of equations describing the closed orbit $Z^{\circ}G_{s\Phi_0}\subseteq G$.
    \end{itemize}
\end{corollary}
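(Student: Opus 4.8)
The plan is to identify the closed subscheme $W\subseteq G\times\gf$ cut out by the two displayed families of equations with $X^s_C$, by realising both as the total space of one and the same vector bundle over the base $B:=Z^{\circ}G_{s\Phi_0}$. First I would record that $B$ really is a smooth closed subvariety of $G$: the representative $s$ may be chosen semisimple (every element of $M$ is semisimple, since $M^{\circ}=Z^{\circ}$ is a torus) and commutes with the torus element $\Phi_0=\lambda(\sqrt q)$ because $M$ centralises the associated cocharacter, so $s\Phi_0$ is semisimple, its conjugacy class in $G$ is closed, and $B$ is the preimage of a closed semisimple conjugacy class under $G\to G/Z^{\circ}$; as a homogeneous space it is also smooth.

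The inclusion $X^s_C\subseteq W$ is the easy half: the relation $\Ad(\Phi)N=qN$ holds identically on $X^s_C\subseteq S_G$, and the first projection carries $X^s_C$ into $B$ by the Remark preceding the corollary (equivalently, by the restriction of the Cartesian square \ref{commutative diagram} to the $s$-component), so the equations defining $B$ also vanish on $X^s_C$; this gives a closed immersion $\iota\colon X^s_C\hookrightarrow W$ over $B$. For the converse I would analyse $W$ as the scheme-theoretic kernel of the endomorphism $\varphi\colon B\times\gf\to B\times\gf$, $(\Phi,N)\mapsto(\Phi,\Ad(\Phi)N-qN)$, of the trivial bundle $B\times\gf$. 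The point I expect to carry all the weight is that $\varphi$ has constant rank: any two points of $B$ are related by conjugation by some $g\in G$ together with a central translation, and $\Ad(zg\Phi g^{-1})-q=\Ad(g)\,(\Ad(\Phi)-q)\,\Ad(g)^{-1}$, so $\rk(\Ad(\Phi)-q)$ is independent of $\Phi\in B$. Over the reduced (indeed smooth) base $B$, a bundle endomorphism of constant rank has locally free cokernel, hence its image and kernel are subbundles; thus $W=\ker\varphi$ is itself a vector bundle over $B$, in particular smooth over $B$ and therefore reduced, with fibre $\ker(\Ad(\Phi)-q)\subseteq\gf$ above $\Phi$.

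To conclude, I would invoke the Remark once more, which presents $X^s_C\to B$ as the vector bundle over $B$ whose fibre above $\Phi$ is again $\ker(\Ad(\Phi)-q)$. Then $\iota$ is a closed immersion of vector bundles over $B$, over the identity of $B$, which on each fibre is the identity map $\ker(\Ad(\Phi)-q)\hookrightarrow\ker(\Ad(\Phi)-q)$; its cokernel is a coherent sheaf with vanishing fibres, hence zero, so $\iota$ is an isomorphism and $X^s_C=W$. (Equivalently: $W$ is reduced by the previous step and coincides set-theoretically with $X^s_C$ --- the two inclusions being $\iota$ and the fibrewise statement of the Remark --- while $X^s_C$ is reduced by Proposition \ref{prop 9}(5), so $W=W_{\mathrm{red}}=X^s_C$.) The remaining checks are routine; the only genuinely non-formal inputs are the fibrewise description of $X^s_C$ supplied by the Remark and the constant-rank observation above, and the latter is where I expect the main (small) obstacle to lie.
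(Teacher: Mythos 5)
Your argument is correct and follows essentially the same route as the paper, which deduces the corollary directly from the Cartesian square \ref{commutative diagram} and the resulting description of $X^s_C\rightarrow Z^{\circ}G_{s\Phi_0}$ as the vector bundle with fibre $\ker(\Ad(\Phi)-q)$. The only addition is that you make explicit, via the constant-rank observation over the smooth base $Z^{\circ}G_{s\Phi_0}$, why the naive zero locus of the stated equations is already reduced and coincides scheme-theoretically with that bundle --- a point the paper leaves implicit.
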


\subsection{The converse non-smoothness result}

We now begin work towards the converse of Theorem $\ref{theorem-smooth}$. Consider the situation described at the beginning of the chapter with reductive group $G$ over an algebraically closed field $\F$ with (almost) simple derived subgroup with maximal torus $T$, set of roots $\Pi$, a set of simple roots $\Delta\subseteq \Pi$, and a nilpotent element $e$ with associated cocharacter $\lambda$. 
Let $L$ be the smallest Levi subgroup of $G$ with $e\in\Lie(L)$, so that $e$ is distinguished for $L$. Let $\Delta_L\subseteq \Delta$ be the simple roots of $L$. 

\begin{theorem}{\label{theorem -notsmooth}}
Let $G$ be as before. Suppose $C\subseteq \Nc_G$ is a nilpotent adjoint orbit, distinguished in a proper and non-trivial Levi subgroup $T\neq L\subsetneq G$. Then $X_C\subseteq S_G$ is singular. 
\end{theorem}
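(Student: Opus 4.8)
The plan is to exhibit an explicit singular point of $X_C$ by constructing a tangent vector that cannot be integrated, exploiting the fact that $C$ is distinguished in a \emph{proper} Levi $L$. Fix a distinguished nilpotent $e\in C$, an associated cocharacter $\lambda$ (landing in $[L,L]$), and set $\Phi_0=\lambda(\sqrt q)$ as in the proof of Theorem \ref{theorem-smooth}, so $(\Phi_0,e)\in X_C$. The key structural input is that, since $e$ is distinguished in $L$ but $L\neq G$, the centraliser $C_G(e)$ contains the positive-dimensional torus $Z_L^\circ$ which is \emph{not} central in $G$; equivalently, $C_G(\Phi_0)\cap C_G(e)$ is strictly larger than it ``should'' be for a smooth point. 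Concretely, I expect the dimension count to fail: by Proposition \ref{prop 9}(2), $X_C$ has pure dimension $\dim G$ at every point, so it suffices to show that $\dim T_{(\Phi_0,e)}X_C>\dim G$, i.e. that the Zariski tangent space is too big.

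The main computational step is therefore to compute $T_{(\Phi_0,e)}X_C$. Writing $S_G\subseteq G\times\gf$ cut out by $\Ad(\Phi)N=qN$, the tangent space at $(\Phi_0,e)$ consists of pairs $(X,Y)\in\gf\times\gf$ (with $X$ in the tangent space to $G$ at $\Phi_0$, identified with $\gf$ via translation) satisfying the linearised equation
\[
[X,\Ad(\Phi_0)e]+\Ad(\Phi_0)Y = qY,
\]
which, using $\Ad(\Phi_0)e=qe$, becomes $q[X,e]+\Ad(\Phi_0)Y=qY$. Decomposing $\gf=\bigoplus_i\gf(\lambda,i)$, the operator $\Ad(\Phi_0)$ acts on $\gf(\lambda,i)$ by $q^{i/2}$, so the equation splits into graded pieces and one can compute the solution space explicitly. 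The point is then to compare this with the tangent space at the generic point of $X_C$ (where, by Theorem \ref{theorem-smooth}'s proof, $X_C$ is a vector bundle over the orbit-type locus and has dimension exactly $\dim G$), and show the jump in dimension at $(\Phi_0,e)$ is strictly positive — this jump should be governed by $\dim Z_L^\circ - \dim Z_G^\circ = \dim Z_L^\circ >0$, together with a term measuring the failure of $e$ to be distinguished in $G$ (namely $\dim\gf(\lambda,1)$, which vanishes here, and the discrepancy in the distinguished-parabolic dimension equality $\dim\gf_L(\lambda,0)$ vs. $\dim\gf(\lambda,0)$).

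Since $X_C$ is equidimensional of dimension $\dim G$ and $G$-stable (Proposition \ref{prop 9}), showing $\dim T_{(\Phi_0,e)}X_C>\dim G$ forces $(\Phi_0,e)$ to be a singular point, which is exactly the assertion. The hard part will be the honest bookkeeping in the graded tangent-space computation: one must carefully identify which of the linearised equations are automatically satisfied, correctly account for the $\gf(\lambda,0)$-component of $X$ (which is unconstrained and contributes a copy of $\gf(\lambda,0)$, including the ``extra'' directions $Z_L^\circ$), and verify that the resulting dimension strictly exceeds $\dim G = \dim C_G(\Phi_0) + \dim(\text{orbit of }\Phi_0)$ — in other words, translate ``$e$ distinguished in a proper Levi'' into a strict numerical inequality. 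A cleaner alternative I would also try is to avoid the tangent-space computation entirely by observing that $(\Phi_0,e)$ lies on \emph{two} distinct irreducible components of $S_G$: the component $X_C$, and a second component $X_{C'}$ where $C'$ is a larger orbit (e.g. a distinguished orbit of $G$ degenerating to $C$), with both containing the pair $(\Phi_0, e)$ because $\lambda$ can simultaneously be an associated cocharacter witnessing membership in the closure of $C'$; a point lying on two components is necessarily singular. Deciding which of these two routes is less painful, and checking that the second component genuinely passes through $(\Phi_0,e)$, is where I expect most of the effort to go.
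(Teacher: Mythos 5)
Your proposal has a genuine gap, and it occurs at the very first step: the choice of point. The point $(\Phi_0,e)=(\lambda(\sqrt q),e)$ lies in $p^{-1}(C)$, which is an \emph{open} subset of $X_C=\overline{p^{-1}(C)}$ and is smooth: the fibre of $p^{-1}(C)\to C$ over $e$ is a coset of the (smooth) centraliser $C_G(e)$, so $p^{-1}(C)\cong G\times^{C_G(e)}p^{-1}(e)$ is smooth of dimension $\dim G$. Hence $\dim T_{(\Phi_0,e)}X_C=\dim G$ and no tangent-space excess of $X_C$ can be detected there. What your linearised equation $q[X,e]+\Ad(\Phi_0)Y=qY$ computes is $T_{(\Phi_0,e)}S_G$, not $T_{(\Phi_0,e)}X_C$; since $X_C$ is only one union of components, the linearisation gives an upper bound for $T_PX_C$, whereas the theorem requires a \emph{lower} bound exceeding $\dim G$. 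The same conflation undermines your ``cleaner alternative'': a point on two irreducible components of $S_G$ is a singular point of $S_G$, not of $X_C$, and the theorem asserts that the scheme $X_C$ itself is singular. (Moreover the claimed second component through $(\Phi_0,e)$ need not exist: already for $\GL_3$ with $C$ the orbit of partition $(2,1)$, the eigenvalue ratios of $\Phi_0=\lambda(\sqrt q)$ are incompatible with membership in $X_{(3)}$.)

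The paper's proof differs precisely where your plan breaks down. It locates the singularity in the boundary, at a point $P=(\Phi_0,0)$ with $N=0$ and $\Phi_0=z\lambda(\sqrt q)$ for a carefully chosen $z\in Z_L$ (chosen so that exactly one extra root $\alpha\in\Delta\setminus\Delta_L$, adjacent to $\Delta_L$, satisfies $\alpha(\Phi_0)=1$; this uses Lemma \ref{Lem:exposed roots} on exposed roots, and yields $\dim C_G(\Phi_0)=\dim M_0+2$). To get a lower bound on $T_PX_C$ it exhibits explicit subvarieties \emph{contained in $X_C$} through $P$ — the $G$-orbit of $P$, torus directions $\tilde T\cap X_C$, nilpotent directions $\Nc_0\cap X_C$, and a root-subgroup direction $U_{-\alpha}$ commuting with $e$ — proves their tangent spaces are in direct sum, and uses the reflection $s_\alpha$ to produce extra independent directions ($\epsilon_1,\epsilon_2,\epsilon_3>0$ against $\epsilon_0=2$), forcing $\dim T_PX_C\geq\dim G+1$. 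Certifying that candidate tangent vectors actually lie in $T_PX_C$ (e.g. via $(z\lambda(\sqrt q),\mu e)\in p^{-1}(C)$ and $\Ad(u_{-\alpha}(a))e=e$) is the essential mechanism your proposal is missing; without it, any computation only constrains $S_G$.
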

Before we move onto the proof of this theorem, we will need some terminology and a lemma. 
Let $D=(\Delta,\Sigma)$ be the Dynkin diagram of $G$, and $D_L=(\Delta_L,\Sigma_L)\subseteq D$ the maximal subdiagram containing exactly the vertices $\Delta_L$. (Note that $D$ is connected when its derived subgroup is (almost) simple but $D_L$ may not be connected). Recall that, given a distinguished nilpotent $e$, one can attach to each vertex $\beta\in \Delta_L$ the number $\langle\alpha,\lambda\rangle\in\{0,2\}$, and this is called the weighted Dynkin diagram $D_L(e)$.

We call a root $\alpha\in\Delta_L$ \emph{exposed} if there is an edge in $D$ connecting $\alpha$ to a root $\beta\in\Delta\setminus\Delta_L$.

\begin{lemma}{\label{Lem:exposed roots}}
 Any exposed root $\alpha\in \Delta_L$ has $\langle\alpha,\lambda\rangle=2$.
\end{lemma}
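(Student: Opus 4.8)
The plan is to prove the contrapositive: if $\alpha\in\Delta_L$ has $\langle\alpha,\lambda\rangle=0$, then $\alpha$ is not exposed. Since the labels of the weighted Dynkin diagram $D_L(e)$ lie in $\{0,2\}$ (as recalled above, $e$ being distinguished in $L$), this is equivalent to the stated lemma, so it suffices to rule out the label $0$ on exposed roots.

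The argument rests on two inputs. First, by construction $\lambda$ is dominant with respect to the \emph{full} base $\Delta$, not merely with respect to $\Delta_L$, so $\langle\gamma,\lambda\rangle\ge 0$ for every $\gamma\in\Delta$, in particular for every $\gamma\in\Delta\setminus\Delta_L$. Second, since $\im(\lambda)\subseteq[L,L]$, the cocharacter $\lambda$ lies in the $\Q$-span of $\{\gamma^\vee:\gamma\in\Delta_L\}$, so I may write $\lambda=\sum_{\gamma\in\Delta_L}d_\gamma\,\gamma^\vee$ with $d_\gamma\in\Q$. The key preliminary step is to show $d_\gamma>0$ for all $\gamma\in\Delta_L$: pairing the expression for $\lambda$ against the simple roots of $L$ gives $(\langle\gamma,\lambda\rangle)_{\gamma}=C_L\,(d_\gamma)_{\gamma}$ where $C_L$ is the Cartan matrix of $L$, hence $(d_\gamma)=C_L^{-1}(\langle\gamma,\lambda\rangle)$. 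Now $C_L^{-1}$ has strictly positive entries on each connected component of $D_L$, and the label vector $(\langle\gamma,\lambda\rangle)$ is nonnegative and nonzero on each component — nonzero because if it vanished on the component corresponding to a simple factor $L_0$ of $L$, then the $L_0$-component of $e$ would lie in $\mathfrak g_{L_0}(\lambda,2)=0$, so $e$ would lie in the Lie algebra of the proper standard Levi of $G$ spanned by $\Delta_L\setminus\Delta_{L_0}$, contradicting minimality of $L$. Thus $(d_\gamma)=C_L^{-1}(\langle\gamma,\lambda\rangle)$ has all coordinates strictly positive.

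Finally, suppose for contradiction that $\alpha\in\Delta_L$ has $\langle\alpha,\lambda\rangle=0$ and is exposed, say there is an edge in $D$ from $\alpha$ to some $\beta\in\Delta\setminus\Delta_L$, so $\langle\beta,\alpha^\vee\rangle\le -1$. Then
\[\langle\beta,\lambda\rangle=\sum_{\gamma\in\Delta_L}d_\gamma\langle\beta,\gamma^\vee\rangle\le d_\alpha\langle\beta,\alpha^\vee\rangle\le -d_\alpha<0,\]
where every summand is $\le 0$ because $d_\gamma\ge 0$ and $\langle\beta,\gamma^\vee\rangle\le 0$ for the distinct simple roots $\beta\ne\gamma$, and where strict negativity uses $d_\alpha>0$ from the previous step. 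This contradicts the dominance of $\lambda$, which forces $\langle\beta,\lambda\rangle\ge 0$. Hence no exposed root carries the label $0$, and therefore every exposed root carries the label $2$.

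I do not expect a serious obstacle here; the proof is short once the right inputs are isolated. The one point requiring care is the strict positivity $d_\alpha>0$, which combines the positivity of the inverse Cartan matrix on connected Dynkin diagrams with the minimality of $L$ (needed to exclude a simple factor of $L$ on which $e$, and hence $\lambda$, is trivial). Everything else is a single pairing computation, plus the already-used fact that the weighted Dynkin diagram of a distinguished nilpotent has labels in $\{0,2\}$.
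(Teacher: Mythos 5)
There is a genuine gap, and it is fatal: your two ``inputs'' are mutually inconsistent in exactly the situation the lemma is about, and your final computation never uses the hypothesis $\langle\alpha,\lambda\rangle=0$. Look at the last display: the estimate $\langle\beta,\lambda\rangle=\sum_{\gamma\in\Delta_L}d_\gamma\langle\beta,\gamma^\vee\rangle\le d_\alpha\langle\beta,\alpha^\vee\rangle<0$ is valid for \emph{every} $\beta\in\Delta\setminus\Delta_L$ adjacent to the subdiagram $D_L$, whatever the label of the adjacent vertex of $\Delta_L$ is. So if your premises were consistent, the argument would show that no exposed root can exist at all, which is false (exposed roots are the whole point of the lemma, and they occur whenever $T\neq L\subsetneq G$ with $D$ connected). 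The resolution is that one cannot simultaneously have $\Delta_L\subseteq\Delta$ (i.e.\ $L$ a standard Levi, which the lemma needs in order to speak of $D_L\subseteq D$ and exposed roots) and $\lambda$ dominant with respect to the \emph{full} base $\Delta$. Concretely, take $G=\GL_3$, $L=\GL_2\times\GL_1$, $e$ regular in $L$: then $\lambda=\alpha_1^\vee$, so $\langle\alpha_1,\lambda\rangle=2$ but $\langle\alpha_2,\lambda\rangle=-1<0$; conjugating $\Delta$ to make $\lambda$ dominant destroys the property $\Delta_L\subseteq\Delta$. Your positivity step ($d_\gamma>0$ via the inverse Cartan matrix and minimality of $L$) is correct, but it precisely \emph{proves} that $\lambda$ fails to be dominant at any root of $\Delta\setminus\Delta_L$ adjacent to $D_L$; the sentence in the setup choosing $\Delta$ with $\langle\alpha,\lambda\rangle\ge 0$ for all $\alpha\in\Delta$ can only be meant, and can only be used, for $\alpha\in\Delta_L$ (where Bala--Carter gives labels in $\{0,2\}$).

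This also explains why a purely formal argument of this shape is unlikely to work: the lemma is not a consequence of dominance and positivity alone, but of the classification. For instance, the distinguished orbit $C_3(a_1)$ has weighted diagram $(2,0,2)$, so a label $0$ does occur at an interior vertex of $D_L$; the lemma holds only because no Dynkin diagram $D$ attaches an extra vertex at that interior position. That is exactly the kind of information the paper's proof extracts by inspecting, type by type, which vertices of $D_L$ can be exposed inside $D$ and the tables of weighted Dynkin diagrams of distinguished orbits (Carter's tables, reproduced in Section \ref{Section: distinguished dynkin diagrams}). If you want a non--case-by-case proof, you would need an argument that sees the interplay between the embedding $D_L\subseteq D$ and the distinguished labels, not just dominance of $\lambda$.
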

\begin{proof}
    In the case of type $A,B,C$ and $D$, either all simple factors of the Levi subgroup is of type $A$, or exactly one of the almost simple factors of $L$ is of type $B,C,D$ respectively. 
    If all the simple factors of the Levi subgroup are type $A$, then all roots $\alpha\in\Delta_L$ have $\langle \alpha,\lambda\rangle =2$ because all distinguished nilpotents orbits are regular in type $A$. In the case with one factor of type $B,C$ or $D$, the only `exposed' root of this factor is on the end of the string, and one can see from the tables on pages 174 and 175 of \cite{Carter85(93)} that, independent of the choice of distinguished nilpotent, this exposed root $\alpha$ always has $\langle\alpha, \lambda \rangle =2$. This proves the lemma in types $A,B,C$ and $D$. 

    In type $G_2$, the only proper Levi subgroups are of type $A_1$, so all roots $\alpha\in \Delta_M$ have $\langle\alpha, \lambda \rangle =2$.
    
    In type $F_4$, there are three possibilities for a Levi factor not of type $A$, these being $C_2,C_3$ and $B_3$. The distinguished orbits of these Levi subgroups are described on pages 174 and 175 of \cite{Carter85(93)} and are listed below in Figure \ref{fig:Levis of F_4}.
    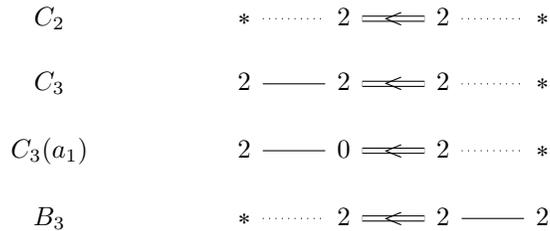
\begin{figure}[ht]
        \centering
\begin{tikzcd}[row sep=small]
	{C_2} && {*} & 2 & 2 & {*} \\
	{C_3} && 2 & 2 & 2 & {*} \\
	{C_3(a_1)} && 2 & 0 & 2 & {*} \\
	{B_3} && {*} & 2 & 2 & 2
	\arrow[dotted, no head, from=1-3, to=1-4]
	\arrow["{<}"{marking, allow upside down}, Rightarrow, no head, from=1-4, to=1-5]
	\arrow[dotted, no head, from=1-5, to=1-6]
	\arrow[no head, from=2-3, to=2-4]
	\arrow["{<}"{marking, allow upside down}, Rightarrow, no head, from=2-4, to=2-5]
	\arrow[dotted, no head, from=2-5, to=2-6]
	\arrow[no head, from=3-3, to=3-4]
	\arrow["{<}"{marking, allow upside down}, Rightarrow, no head, from=3-4, to=3-5]
	\arrow[dotted, no head, from=3-5, to=3-6]
	\arrow[dotted, no head, from=4-3, to=4-4]
	\arrow["{<}"{marking, allow upside down}, Rightarrow, no head, from=4-4, to=4-5]
	\arrow[no head, from=4-5, to=4-6]
\end{tikzcd}
        \caption{The Distinguished weighted Dynkin diagrams of Levi subgroups of $F_4$}
        \label{fig:Levis of F_4}
    \end{figure}
    From this we directly see that all exposed roots have $\langle\alpha, \lambda \rangle =2$.

    In type $E_l$, there are three possibilities for the Levi subgroup types. Either there are only factors of type $A$ for which the result holds, or there is a unique factor of type $D_n$ and $n\leq 7$, or there is a factor of type $E_6$ or $E_7$. 
    In the case of a factor of type $E_6$ or $E_7$, the weighted Dynkin diagrams of distinguished parabolic subgroups (of which there are $3$ of type $E_6$ and $6$ of type $E_7$) are listed on page 176 of \cite{Carter85(93)}, from which it is clear that all exposed roots $\alpha$ have the desired property. See also Figures \ref{fig:E6} and \ref{fig:E7} in Section \ref{Section: distinguished dynkin diagrams}.
    
    This leaves only the case that $L$ has a factor of type $D_n$. All distinguished orbits of $D_n$ with $n\leq 7$ are listed in Figure \ref{fig:D_n} in Section \ref{Section: distinguished dynkin diagrams}. From this table we see that all exposed roots have the desired property.
\end{proof}

\begin{remark}
    Regarding the reference to the book \cite{Carter85(93)}, we briefly note that in the description of non-regular distinguished parabolic subgroups in type $B_l$ on page 175, one requires $k\geq 2$ (in the notation of the source) for the conditions to make sense, though this isn't explicit. In our application, the important fact is that for $B_l$ with $l=3$, the only distinguished orbit is the regular orbit. This can also be seen from the description of distinguished orbits in Theorem 8.2.14 of \cite{ColMcG93} via partitions of $2l+1=7$ into distinct odd parts.
\end{remark}

\begin{proof}[Proof of Theorem \ref{theorem -notsmooth}]
Consider a point $P=(\Phi_0,0)\in X_C$ with $\Phi_0\in T$. Define four subvarieties of $S_G$ that contain $P$ as follows:

\begin{enumerate}
    \item Let $\Oc=G.P$ be the $G$-orbit of $P$.
    \item Let $\tilde{T}$ be the maximal torus of $G$ seen as a closed subvariety of $S_G$ via the inclusion $\Phi \mapsto (\Phi,0)$. 
    \item Let $\Nc_0=\{N\in\mathfrak{g}: \Ad(\Phi_0)N=qN\}$ viewed as a closed subvariety of $S_G$ via the inclusion $N\mapsto (\Phi_0,N)$. 
    \item Let $U_0=U^-\cap\Stab_G(P)$ where $U^-$ is the opposite unipotent subgroup to $[B,B]$, viewed as a closed subvariety of $S_G$ via the inclusion $u\mapsto (\Phi_0u,0)$.
\end{enumerate}

\begin{claim}
    The Tangent spaces $T_P\Oc, T_P\tilde{T}, T_P\Nc_0$ and $T_PU_0$ form a direct sum inside $T_PS_G$. 
\end{claim}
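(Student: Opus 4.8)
The plan is to compute all four tangent spaces as subspaces of the ambient space $T_{(\Phi_0,0)}(G\times\gf)$, which we identify with $\gf\oplus\gf$ once and for all by trivialising $T_{\Phi_0}G\cong\gf$ via right translation. Writing the first summand as the ``$\Phi$-direction'' and the second as the ``$N$-direction'', it is then enough to check that the four tangent spaces are in direct position inside $\gf\oplus\gf$, a direct sum relation among subspaces being insensitive to the ambient space.

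First I would describe the four subvarieties concretely. Since $\Ad(g)\cdot 0=0$, the orbit $\Oc=G.P$ is precisely the $G$-conjugacy class of $\Phi_0$, placed in the slice $N=0$; differentiating the orbit map $g\mapsto(g\Phi_0g^{-1},0)$ at the identity gives $T_P\Oc=\im(\Ad(\Phi_0)-\id)\oplus 0$. Here one uses that, since $l>h_G$, the prime $l$ is very good, so that the centraliser $C_G(\Phi_0)$ of the semisimple element $\Phi_0$ is smooth and the orbit map is separable, pinning $T_P\Oc$ down exactly. Since $\Phi_0\in T$, the torus gives $T_P\tilde T=\tf\oplus 0$, and $\tf\subseteq\gf^{\Ad(\Phi_0)}=\ker(\Ad(\Phi_0)-\id)$. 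The stabiliser of $P$ under conjugation is $C_G(\Phi_0)$, so $U_0=U^-\cap C_G(\Phi_0)$ has Lie algebra $\Lie(U^-)\cap\gf^{\Ad(\Phi_0)}$, and differentiating $u\mapsto(\Phi_0u,0)$ gives $T_PU_0=\bigl(\Lie(U^-)\cap\gf^{\Ad(\Phi_0)}\bigr)\oplus 0$, again contained in $\ker(\Ad(\Phi_0)-\id)\oplus 0$. Finally $\Nc_0=\ker(\Ad(\Phi_0)-q\,\id)$ is linear and embedded in the $N$-direction, so $T_P\Nc_0=0\oplus\ker(\Ad(\Phi_0)-q\,\id)$.

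With these computed the direct sum follows from elementary linear algebra. The element $\Phi_0=\lambda(\sqrt q)$ lies in $T$, hence is semisimple, so $\Ad(\Phi_0)$ is a semisimple endomorphism of $\gf$ and we have the $\Ad(\Phi_0)$-stable decomposition $\gf=\ker(\Ad(\Phi_0)-\id)\oplus\im(\Ad(\Phi_0)-\id)$ into the $1$-eigenspace and the span of the other eigenspaces. As $T_P\Oc$ lies in $\im(\Ad(\Phi_0)-\id)\oplus 0$ while $T_P\tilde T+T_PU_0$ lies in $\ker(\Ad(\Phi_0)-\id)\oplus 0$, these two meet only in $0$; and $T_P\tilde T\cap T_PU_0=0$ since $\tf\cap\Lie(U^-)=0$. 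Therefore $T_P\Oc\oplus T_P\tilde T\oplus T_PU_0$ is a direct sum living entirely in the $\Phi$-direction, and adjoining $T_P\Nc_0\subseteq 0\oplus\gf$ keeps the sum direct. This is the asserted direct sum inside $T_PS_G$.

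The argument is essentially mechanical, so I do not expect a genuine obstacle; the one conceptual point is the semisimplicity of $\Ad(\Phi_0)$, which is exactly what makes the orbit direction $T_P\Oc$ transverse to the two ``centraliser directions'' $T_P\tilde T$ and $T_PU_0$. The matters needing care are bookkeeping ones: fixing a single trivialisation of $T_{\Phi_0}G$ so that $T_P\Oc$, $T_P\tilde T$ and $T_PU_0$ are all subspaces of one copy of $\gf$, and using very-goodness of $l$ to know $C_G(\Phi_0)$ is smooth. The resulting direct sum will later be weighed against $\dim S_G=\dim G$ in the proof of Theorem \ref{theorem -notsmooth} to conclude that $X_C$ is singular at $P$, but that lies beyond the present claim.
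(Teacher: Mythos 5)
Your argument is correct and follows essentially the same route as the paper: identify $T_P\Oc$ with $\im(\id-\Ad(\Phi_0))$ in the $\Phi$-direction, use semisimplicity of $\Phi_0$ to split $\gf$ into $\ker(\id-\Ad(\Phi_0))\oplus\im(\id-\Ad(\Phi_0))$ so the orbit direction meets the two centraliser directions $T_P\tilde T$ and $T_PU_0$ trivially, and note that $T_P\Nc_0$ sits in the $N$-direction and so is automatically transverse to the rest. The only cosmetic difference is that you assert equalities (e.g.\ $T_PU_0=\bigl(\Lie(U^-)\cap\gf^{\Ad(\Phi_0)}\bigr)\oplus 0$) where mere containments suffice for the claim, but this does not affect correctness.
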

Observe that that $T_P(S_G)\subseteq T_P(G\times \gf)= \gf\times \gf$. We briefly describe $T_P\Oc$ as a subspace of $\gf\times\{0\}$ (which we will conflate with $\gf$ as this shouldn't cause confusion). 

Consider the map $f:G\rightarrow G: g\mapsto g\Phi_0g^{-1}\Phi_0^{-1}$, comprised of the conjugation action of $g$ on $\Phi_0$ followed by right multiplication by $\Phi_0^{-1}$ (to ensure the identity is sent to the identity). Then $\Oc$ is isomorphic to the set theoretic image of $f$; a locally closed subscheme of $G$. 
The derivative of this map is $\id-\Ad(\Phi_0):\gf\rightarrow \gf$, and it factors through $\gf\twoheadrightarrow T_PC\hookrightarrow \gf$. We hence see a natural identification of $T_P\Oc$ with $\im(\id-\Ad(\Phi_0))$.

We proceed now to prove the claim.
Firstly, the intersection $T_P\tilde{T}\cap T_PU_0=\{0\}$ because $\Lie(\tilde{T})\cap \Lie(U^-)=\{0\}$.
Next, consider $T_P\Oc\cap (T_P\tilde{T}\oplus T_PU_0)$. 
Observe that $U_0, \tilde{T}\subseteq \Stab_G(\Phi)\subseteq G$ and that $T_P\Stab_G(\Phi_0)=\ker(\id-\Ad(\Phi_0))$. Then because $\Phi_0$ is semisimple, the intersection of $\im(\id-\Ad(\Phi_0))$ and $\ker(\id-\Ad(\Phi_0))$ is trivial (this is easily checked for $\GL_n$, and extended to all $G$ because $G$ can always be embedded into some $\GL_N$) and so we see $T_P(\Stab_G(\Phi_0))\cap T_P\Oc=0$, and thus
$T_P\Oc\cap (T_P\tilde{T}\oplus T_PU_0)=0$.

To show that $T_P\Nc_0$ intersects $T_P\Oc+T_P\tilde{T}+T_PU_0$ at the origin, it suffices to notice that an element of $T_PC+T_P\tilde{T}+T_PU_0$ takes the form $P'=(\Phi',0)$, while an element $P'\in T_P\Nc_0$ takes the form $P'=(\Phi_0,N)\in S_G(\F[\epsilon])$. 
For these to be equal, we must have $\Phi'=\Phi_0$ and $N=0$, so $P'=P$. This proves the claim.

Suppose that $C\subseteq \Nc_{G}$ is an adjoint orbit, neither zero nor a distinguished nilpotent orbit as in the hypothesis. Define $e\in C$, a choice of maximal torus and Borel $\tilde{T}\subset B$, the associated cocharacter $\lambda$, and minimal Levi $L\subset G$ all as in the general setup.
 Set $D=\tilde{T}\cap X_C$, and $\Nc_1=\Nc_0 \cap X_C$, and note that $U_0, \Oc\subseteq X_C$ already. Our aim is to show that there is a point $P\in X_C$ such that 
\[\dim_{\F}(T_P\Oc)+\dim_{\F}(T_PD)+\dim_{\F}(T_P\Nc_1)+\dim_{\F}(T_PU_0)>\dim(X_C)=\dim(G).\]

It is clear that whenever $z\in Z_L$ and $\mu\in \F$, the point $(z\lambda(\sqrt{q}),\mu e)\in X_C$. 
We choose the point $P=(\Phi_0,0)\in X_C$ with $\Phi_0=z\lambda(\sqrt{q})$ for some $z\in Z_L$, which we will determine momentarily. 

Regardless of the choice of $z$ for now, recall the decomposition 
\[\Lie(L)=\bigoplus_{i\in \Z}\gf_L(\lambda,i).\]
As $\gf_L(\lambda,0)$ is a Levi subalgebra of $\Lie(L)$, there is a Levi subgroup $M_0\subset L$ with $\Lie(M_0)=\gf_L(\lambda,0)$. It is clear that $M_0\subseteq \Stab_G(\Phi_0)$. 
Observe also that $\gf_L(\lambda,2)\subseteq \Nc_1$ and $Z_L \Phi_0\subseteq D$.
We define positive integers:
\begin{enumerate}
    \item $\epsilon_0:=\dim(\Stab_G(\Phi_0))-\dim_{\F}(\gf_L(\lambda,0))=\dim(G)-\dim(\Ocal)-\dim_{\F}(\gf_L(\lambda,0));$
    \item $\epsilon_1:=\dim_{\F}(T_PD)-\dim(Z_L)$;
    \item $\epsilon_2:=\dim_{\F}(T_P\Nc_1)-\dim_{\F}(\gf_L(\lambda,2))$;
    \item $\epsilon_3:=\dim_{\F}(T_PU_0)$.
\end{enumerate}
Putting this together, and using the fact that 
$$\dim(\gf_L(\lambda,0))=\dim(\gf_L(\lambda,2))+\dim(Z_L)$$ 
because $e$ is distinguished inside $L$ (this follows from a generalisation of Lemma 8.2.1 in \cite{MR1251060} to reductive groups in good characteristic), we see
\begin{align*}
    \dim(T_PX_C)
    &\geq\dim(T_P\Oc)+\dim(T_PD)+\dim(T_P\Nc_1)+\dim(T_PU_0) \\
    &\geq [\dim(G)-\dim(\gf_L(\lambda,0))-\epsilon_0]+[\dim(Z_L)+\epsilon_1]\\
    &+[\dim(\gf_L(\lambda,2))+\epsilon_2]+\epsilon_3 \\
    &=\dim(G)+\epsilon_1+\epsilon_2+\epsilon_3-\epsilon_0.
\end{align*}
Thus, it is enough to find some choice of $z\in Z_L$ such that $\epsilon_1+\epsilon_2+\epsilon_3-\epsilon_0>0$. 

Fix some root $\alpha\in\Delta\setminus \Delta_L$ adjacent to a root in $\Delta_L$ in the Dynkin diagram.
\begin{figure}[h!]
    \centering
    \includegraphics[width=0.45\textwidth]{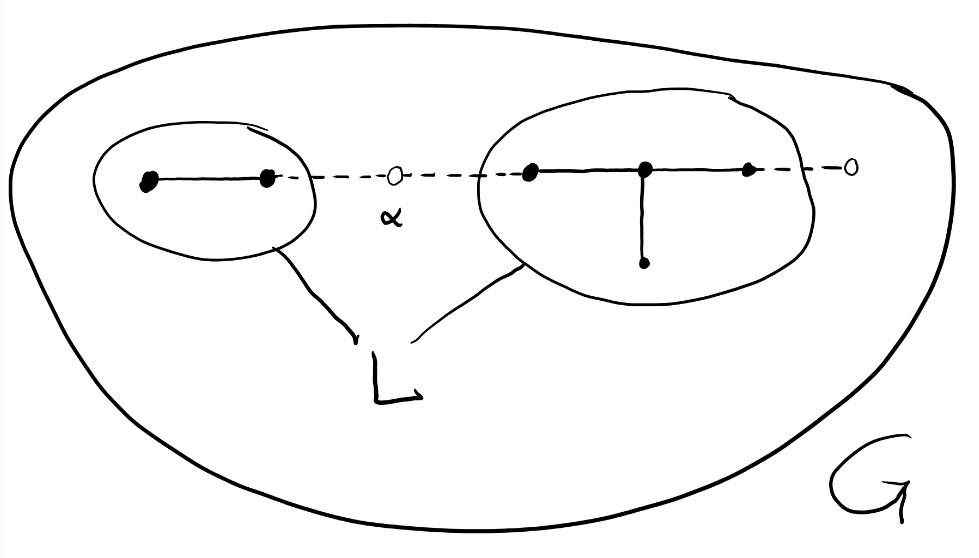}
    \caption{An example of $\alpha$ in the case where $G$ is of type $E_8$ and $L$ of type $D_4\times A_2$.}
    \label{fig:Algebraic group and Levi}
\end{figure}
There is a morphism of algebraic groups
\[B=\prod_{\beta\in \Delta}\beta:\tilde{T}\rightarrow \prod_{\beta\in\Delta}\G_m\]
which is surjective and has kernel $Z\subset \tilde{T}$. Because $B$ is surjective, we can choose some $z\in T$ such that $\alpha(z)=1$ and $\beta(z)=1$ whenever $\beta\in\Delta_L$ (so that $z\in Z_L$) and $\beta(z)=q\neq 1$ in all other cases. 
Consider $\Phi_0=z\lambda(\sqrt{q})$. Whenever $\gamma\in\Pi^+_G$ is a positive root of $G$, it decomposes as a product of simple roots $\gamma=\prod_{\beta\in\Delta}  \beta^{c_{\beta}}\in X(T)$ with $\sum_{\beta} c_{\beta}<h$. By design, each $\beta(\Phi_0)$ is either $1$ or $q$, so 
$\gamma(\Phi_0)\in\{1,q,q^2,...,q^{h-1}\}$, and $\gamma(\Phi_0)=1$ if and only if all the simple roots with $c_{\beta}\neq 0$ satisfy $\beta(\Phi_0)=1$.
\begin{claim}
    If $\gamma\in\Pi^+_G\setminus \Pi^+_L$ has $\gamma(\Phi_0)=1$, then $\gamma=\alpha$.
\end{claim}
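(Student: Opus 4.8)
The plan is to argue purely combinatorially, in terms of the support of $\gamma$ in the Dynkin diagram $D=(\Delta,\Sigma)$. Writing $\gamma=\sum_{\beta\in\Delta}c_\beta\beta$ and $\mathrm{supp}(\gamma)=\{\beta\in\Delta:c_\beta\neq 0\}$, recall the standard fact that $\mathrm{supp}(\gamma)$ is a connected subset of $D$. By the computation made immediately before the claim, the hypothesis $\gamma(\Phi_0)=1$ is equivalent to $\beta(\Phi_0)=1$ for every $\beta\in\mathrm{supp}(\gamma)$, so everything comes down to locating the set $\{\beta\in\Delta:\beta(\Phi_0)=1\}$ inside $D$.

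The first step is to read off this set from the construction of $\Phi_0=z\lambda(\sqrt{q})$. For $\beta\in\Delta_L$ we have $\beta(z)=1$, so $\beta(\Phi_0)=(\sqrt{q})^{\langle\beta,\lambda\rangle}$, which equals $1$ precisely when $\langle\beta,\lambda\rangle=0$. For $\beta\in\Delta\setminus(\Delta_L\cup\{\alpha\})$ we have $\beta(z)=q$ and $\langle\beta,\lambda\rangle\geq 0$ (by the choice of $\Delta$), so $\beta(\Phi_0)=q\cdot(\sqrt{q})^{\langle\beta,\lambda\rangle}\neq 1$; such a $\beta$ can therefore never lie in $\mathrm{supp}(\gamma)$. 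Finally $\alpha(\Phi_0)=1$ by design. Hence $\alpha$ is the only vertex of $\Delta\setminus\Delta_L$ that is permitted to occur in $\mathrm{supp}(\gamma)$; and since $\gamma\notin\Pi^+_L$ forces $\mathrm{supp}(\gamma)\not\subseteq\Delta_L$, we deduce that $\alpha\in\mathrm{supp}(\gamma)$ and that it is the unique vertex of $\mathrm{supp}(\gamma)$ outside $\Delta_L$.

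The second step uses Lemma \ref{Lem:exposed roots}. Let $\beta\in\Delta_L$ be any neighbour of $\alpha$ in $D$. Since $\alpha\in\Delta\setminus\Delta_L$, such a $\beta$ is an \emph{exposed} root, so Lemma \ref{Lem:exposed roots} gives $\langle\beta,\lambda\rangle=2$ and hence $\beta(\Phi_0)=q\neq 1$; thus no neighbour of $\alpha$ in $\Delta_L$ can lie in $\mathrm{supp}(\gamma)$. Suppose now, for contradiction, that $\mathrm{supp}(\gamma)\supsetneq\{\alpha\}$; then $\mathrm{supp}(\gamma)$ contains some $\beta_0\in\Delta_L$. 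As $\mathrm{supp}(\gamma)$ is connected and $\alpha$ is its only vertex outside $\Delta_L$, there is a path in $D$ from $\alpha$ to $\beta_0$ with all vertices in $\mathrm{supp}(\gamma)$; the vertex immediately after $\alpha$ on this path lies in $\Delta_L$ and is adjacent to $\alpha$, contradicting what was just shown. Therefore $\mathrm{supp}(\gamma)=\{\alpha\}$, so $\gamma$ is a positive multiple of the simple root $\alpha$, whence $\gamma=\alpha$.

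The only genuinely nontrivial ingredient is Lemma \ref{Lem:exposed roots} (itself proved case-by-case across the Dynkin types), and the only point that requires a little care is the identification of a $\Delta_L$-neighbour of $\alpha$ as an exposed root so that the lemma may be applied; the remaining steps — connectedness of $\mathrm{supp}(\gamma)$ and the equivalence $\gamma(\Phi_0)=1\iff\beta(\Phi_0)=1$ on $\mathrm{supp}(\gamma)$ — are routine and are already available from the discussion preceding the claim.
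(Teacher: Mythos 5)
Your argument is correct and is essentially the paper's own proof: both reduce $\gamma(\Phi_0)=1$ to a condition on the (connected) support of $\gamma$, force that support into $\Delta_L\cup\{\alpha\}$ with $\alpha$ occurring in it, and then apply Lemma \ref{Lem:exposed roots} to the $\Delta_L$-neighbour of $\alpha$ along a path inside the support to exclude every $\Delta_L$-vertex, so that $\mathrm{supp}(\gamma)=\{\alpha\}$ and hence $\gamma=\alpha$. The only difference is cosmetic: you spell out why $\beta(\Phi_0)\neq 1$ for $\beta\in\Delta\setminus(\Delta_L\cup\{\alpha\})$ using $\langle\beta,\lambda\rangle\geq 0$, where the paper simply appeals to the choice of $z$ and the preceding computation.
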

 If $\gamma$ is simple, then $\gamma=\alpha$ by our choice of $\Phi_0$. Suppose for contradiction that $\gamma$ is not simple. Then it contains at least two simple roots in its decomposition, and one of these must be $\alpha$, as otherwise all simple roots are in $\Delta_L$ and $\gamma\in\Pi_L^+$. There must also be another root $\beta\in\Delta_L$ with $c_{\beta}\neq 0$, and for $\gamma$ to be a root, there must be a path from $\alpha$ to $\beta$ (in the Dynkin diagram) passing through vertexes $\beta'$ with $c_{\beta'}\neq 0$, and each of these as such (since $\gamma(\Phi_0)=1$) has $\beta'(\Phi_0)=1$.
But as $\alpha\in\Delta\setminus\Delta_L$ and $\beta\in\Delta_L$, at least one of the $\beta'$ is an exposed root, and thus $\beta'(\Phi_0)=q$ by Lemma \ref{Lem:exposed roots}. This is a contradiction. We conclude that $\gamma(\Phi_0)=1$ implies either $\gamma=\alpha$ or $\gamma\in\Pi^+_L$. This proves the claim. 

When $\beta\in\Pi$, denote the root subgroup of $\beta$ by $U_{\beta}\leq G$. 
By Theorem 3.5.3 of \cite{Carter85(93)}, the (connected) centraliser of $\Phi_0$ is
\[C_G(\Phi_0)^{\circ}=\langle \tilde{T},U_{\beta},U_{-\beta}: \beta(\Phi_0)=1 \rangle. \]
The subgroup generated by $\tilde{T}$ and all $U_{\beta}$ with $\beta(\Phi_0)=1$ and $\beta\in\Pi_L$ is simply $M_0^{\circ}$, so we see that 
\[C_G(\Phi_0)^{\circ}=\langle M_0^{\circ},U_{\alpha},U_{-\alpha}\rangle\]
and hence $\dim C_G(\Phi_0)=\dim(M_0)+2$ (or, in other words, $\epsilon_0=2$).

The reflection $s_{\alpha}\in N(\tilde{T})/\tilde{T}$ acts on $\tilde{T}$ and stabilises $\Phi_0$; thus it acts on $T_PX_C$. Further, this action preserves the subspaces $T_PD$ and $T_P \Nc_1$. However, since $\alpha$ is adjacent to a simple root of $L$, the reflection $s_{\alpha}$ does not preserve the Levi subgroup $L$, and hence preserves neither $Z_L$ nor $\gf_{L}(\lambda,i)$, and we see that $s_{\alpha}(Z_L)\neq Z_L$ and $s_{\alpha}(\gf_L(\lambda,2))\neq \gf_L(\lambda,2)$, so that $T_PD\supseteq s_{\alpha}(T_PZ_L)\cup T_PZ_L$ and
$T_P\Nc_1\supseteq s_{\alpha}(T_P\gf_L(\lambda,2))\cup T_P\gf_L(\lambda,2)$, forcing $\epsilon_1>0$ and $\epsilon_2>0$ respectively. 

For $\epsilon_3$, consider any choice of isomorphism $u_{-\alpha}:\G_a\xrightarrow{\sim}U_{-\alpha}$ and the adjoint action of $u_{-\alpha}(a)\in U_{-\alpha}$ on $e=\sum_{\beta} e_{\beta}\in \Lie(L)$. Because $\alpha$ is a simple root in $\Delta_G\setminus \Delta_L$, we see that 
\[[e_{-\alpha},\sum_{\beta}e_{\beta}]=\sum_{\beta}[e_{-\alpha},e_{\beta}]=0\]
and thus, that $\Ad(u_{-\alpha}(a))e=e$. Hence,
\[\big\{(\Phi_0u_{-\alpha}(a),\mu e):a\in \G_a, \mu\in \G_m\big\}\]
 is a locally open subscheme of $p^{-1}(C)$, from which we see that $(\Phi_0u_{-\alpha}(\epsilon),0)$ is a deformation in $T_PU_0$, forcing $\epsilon_3>0$.

 We then obtain the inequality $\epsilon_1+\epsilon_2+\epsilon_3-\epsilon_0\geq 3-2=1$, proving that $(\Phi_0,0)$ is a singular point of $X_C$.
\end{proof}

\begin{example}
    Consider the group
    \[G=\GSP_4(R)=\{M\in \GL_4(R): M\Omega M^{-1}=\lambda\Omega \textrm{ for some } \lambda\in \G_m(R)\}\]
    with the convention where $\Omega=\left(\begin{smallmatrix}
        & & & 1 \\
        & & 1 & \\
        & -1 & & \\
        -1 & & & 
    \end{smallmatrix}\right)$ so that a Borel subgroup can be given by the intersection of $\GSP_4$ with the upper triangular matrices in $\GL_4$. We let $L=GL_2\subseteq G$ be the Levi subgroup corresponding to the short root. Then $e=\left(\begin{smallmatrix}
       0 & 1 & &  \\
        & 0 & 0 & \\
        &  & 0 & -1 \\
         & & &  0
    \end{smallmatrix}\right)$ is distinguished in $L$ and the associated cocharacter is $\lambda(t)=\diag(t,t^{-1},t,t^{-1})$. We choose $\Phi_0=\diag(q,1,1,q^{-1})$ and $\alpha$ to be the root corresponding to the one-parameter subgroup defined as $U_{\alpha}=\left\{\left(\begin{smallmatrix}
       1 &  & &  \\
        & 1 & * & \\
        &  & 1 &  \\
         & & &  1
    \end{smallmatrix}\right)\right\}$. 
    Explicitly, we now see that $\Stab_G(P)=\G_m\times \GL_2$. We can also describe the subvarieties:
    \begin{align*}
        Z_L&=\Bigg\{\bigg(\left(\begin{smallmatrix}
       ab &  & &  \\
        & ab &  & \\
        &  & ab^{-1} &  \\
         & & &  ab^{-1}
    \end{smallmatrix}\right),0\bigg):a,b\in\G_m\Bigg\}, \\ 
        s_{\alpha}(Z_L)&=\Bigg\{\bigg(\left(\begin{smallmatrix}
       ab &  & &  \\
        & ab^{-1} &  & \\
        &  & ab &  \\
         & & &  ab^{-1}        
    \end{smallmatrix}\right),0\bigg):a,b\in\G_m\Bigg\} \\
        \gf_L(\lambda,2) &= \bigg\{\Big(\Phi_0,\left(\begin{smallmatrix}
       0 & a & &  \\
        & 0 &  & \\
        &  & 0 & -a \\
         & & &  0
    \end{smallmatrix}\right)\Big):a\in\G_a\bigg\}\\
        s_{\alpha}(\gf_L(\lambda,2)) &=\bigg\{\Big(\Phi_0,\left(\begin{smallmatrix}
       0 &  & a &  \\
        & 0 &  & a \\
        &  & 0 &  \\
         & & &  0
    \end{smallmatrix}\right)\Big):a\in\G_a\bigg\} \\
        U_0 &= \bigg\{\Big(\left(\begin{smallmatrix}
       q &  & &  \\
        & 1 &  & \\
        & a & 1 &  \\
         & & &  q^{-1}
    \end{smallmatrix}\right),0\Big):a\in\G_a\bigg\}
    \end{align*}

    When we put all this together, we see the contribution from $D,\Nc_1$ and $U_0$ is $6$-dimensional, and thus 
    \[\dim T_PX_C\geq\dim(\GSP_4)-\dim(\Stab_G(P))+6=\dim(\GSP_4)+1.\] 
\end{example}
We can piece Theorems \ref{theorem-smooth} and \ref{theorem -notsmooth} together to make the statement:

\begin{corollary}
    Let $G$ be a connected reductive group over a field $\F$ of characteristic $0$ or $l$, and suppose $G^{\ad}=\prod_i G_i$ where each $G_i$ has a (almost) simple derived subgroup. Suppose $q$ is considerate towards each $G_{i,\F}$. 
    Then the smooth irreducible components of $S_{G^{\ad}}$ are precisely those of the form $\prod_i X_i$ where each $X_i\subseteq S_{G_i}$ is a smooth irreducible component of $S_{G_i}$. That is, each $X_i$ corresponds to a distinguished nilpotent orbit of $G_i$ or the zero orbit. The smooth components of $S_G$ are precisely preimages the smooth components of $S_{G^{\ad}}$ under the obvious map $S_G\rightarrow S_{G^{\ad}}$.
\end{corollary}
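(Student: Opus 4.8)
The plan is to reduce everything to a single factor via the isomorphism $S_G\cong\prod_i S_{G_i}$ obtained by iterating $S_{G\times H}\cong S_G\times S_H$, and then to combine the general behaviour of irreducible components and of smoothness under products with Theorems \ref{theorem-smooth} and \ref{theorem -notsmooth}. Working, as throughout this section, over an algebraically closed $\F$, each $S_{G_i}$ is a finite type $\F$-scheme whose irreducible components are geometrically irreducible, so the irreducible components of $S_G=\prod_i S_{G_i}$ are exactly the products $\prod_i X_i$ with $X_i$ an irreducible component of $S_{G_i}$. Moreover $\prod_i X_i$ is smooth over $\F$ if and only if each $X_i$ is: the reverse implication is stability of smoothness under products and base change, while the forward implication follows at a closed point $x=(x_i)_i$ from the identities $T_x\big(\prod_i X_i\big)=\bigoplus_i T_{x_i}X_i$ and $\dim_x\big(\prod_i X_i\big)=\sum_i\dim_{x_i}X_i$ together with the pointwise inequalities $\dim T_{x_i}X_i\geq\dim_{x_i}X_i$, which force equality in every summand (alternatively, one invokes the completed-local-ring argument behind Corollary \ref{corol 45}). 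Combining these two observations, $\prod_i X_i$ is a smooth irreducible component of $S_G$ precisely when each $X_i$ is a smooth irreducible component of $S_{G_i}$, so the corollary reduces to the claim that the smooth irreducible components of a single $S_{G_i}$ are exactly those corresponding to a distinguished nilpotent orbit or the zero orbit.

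To prove this, note first that by Proposition \ref{prop 9}(3) every irreducible component of $S_{G_i}$ is contained in $X_C$ for some nilpotent orbit $C$ of $G_i$, and in exactly one such: the generic point of the component is maximal and, since $S_{G_i}$ is equidimensional of dimension $\dim G_i$ by Proposition \ref{prop 9}(2), cannot lie in the proper closed "boundary" $X_C\setminus p^{-1}(C)$, hence lies in $p^{-1}(C)$ and maps into $C$. Because $q$ is considerate towards $G_{i,\F}$, Theorem \ref{theorem-smooth} applies: when $C$ is distinguished or is the zero orbit, $X_C$ is a disjoint union of smooth connected components, so every irreducible component it contains is smooth. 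When $C$ is non-trivial and non-distinguished, Theorem \ref{theorem -notsmooth} exhibits a point $P=(\Phi_0,0)$ of $X_C$ whose tangent space $T_PX_C$ strictly exceeds $\dim G_i$, coming from the subvarieties $\Oc=G.P$ and the torus- and unipotent-slices $D$, $\Nc_1$, $U_0$ of that proof; as these sit inside a common irreducible component through $P$ (or else one re-runs the construction at a base point adapted to each component), equidimensionality forces that component, and hence every irreducible component contained in $X_C$, to be singular. This identifies the smooth irreducible components of $S_{G_i}$ exactly as claimed.

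The genuinely substantial inputs — Theorems \ref{theorem-smooth} and \ref{theorem -notsmooth} — are already in hand, so the remaining work is organizational. I expect the main obstacle to be the bookkeeping of the second paragraph rather than anything deep: precisely, confirming that the singular point produced by Theorem \ref{theorem -notsmooth} detects non-smoothness of an \emph{irreducible component} of $X_C$ and not merely of the possibly reducible union $X_C$. This requires being attentive to which component the orbit $\Oc$ and the linear slices of that proof lie in; the cleanest fix, if the naive reading is not immediate, is to observe that the components of $X_C$ biject with $\Phi_0$-twisted conjugacy classes in $\pi_0(C_G(e))$ just as in Theorem \ref{theorem-smooth}, and to carry out the tangent-space estimate of Theorem \ref{theorem -notsmooth} with the representative $\Phi_0$ replaced by the one attached to each such class.
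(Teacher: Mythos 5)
Your argument is essentially the paper's own: the corollary appears there with no proof beyond ``piecing together'' Theorems \ref{theorem-smooth} and \ref{theorem -notsmooth} via the factorisation $S_{G\times H}\cong S_G\times S_H$ invoked at the start of Section 3.1, and your product, component, and tangent-space bookkeeping is just the routine filling-in of that one-line reduction. The subtlety you flag --- that Theorem \ref{theorem -notsmooth} literally proves the possibly reducible union $X_C$ is singular rather than each of its irreducible components --- is genuine but is not addressed in the paper either; your suggested refinement (indexing the components of $X_C$ by $\Phi_0$-twisted classes in $\pi_0(C_G(e))$ and running the tangent-space estimate at a representative adapted to each class) is the natural way to close it, though one should still verify that the chosen point lies only on the component in question, since a large tangent space at a crossing point of several components would not by itself show any single component is singular.
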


\subsection{Distinguished orbits in type $D$ and $E$}{\label{Section: distinguished dynkin diagrams}}

For the convenience of the reader in understanding Lemma \ref{Lem:exposed roots}, we have included a list of weighted Dynkin diagrams for all distinguished orbits in types $D_n$ and $E_n$ with $n\leq 7$.
    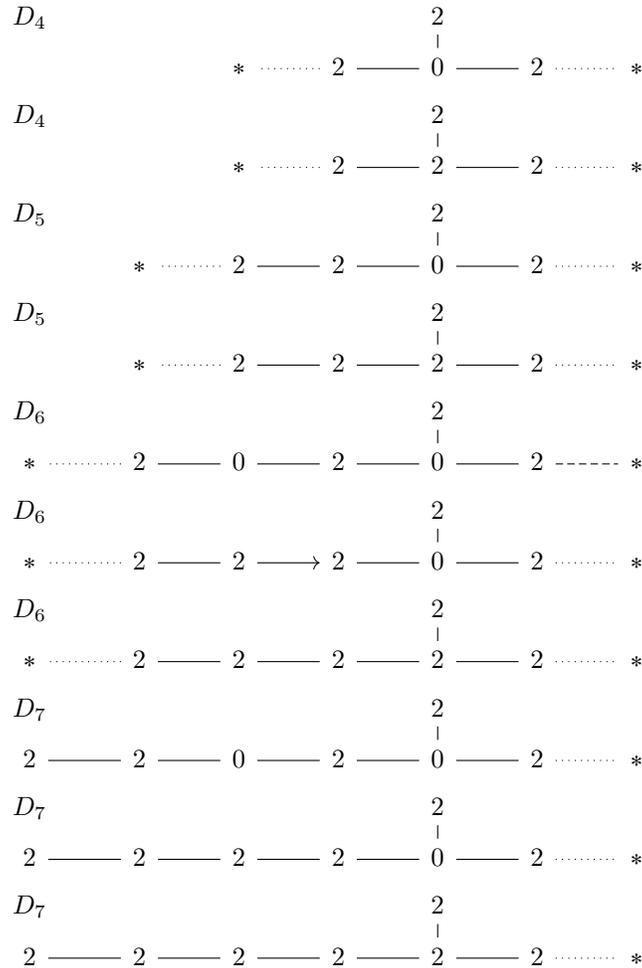
\begin{figure}[h!]
        \centering
        \begin{tikzcd}[row sep=0.3em]
	{D_4} &&&& 2 \\
	&& {*} & 2 & 0 & 2 & {*} \\
	{D_4} &&&& 2 \\
	&& {*} & 2 & 2 & 2 & {*} \\
	{D_5} &&&& 2 \\
	& {*} & 2 & 2 & 0 & 2 & {*} \\
	{D_5} &&&& 2 \\
	& {*} & 2 & 2 & 2 & 2 & {*} \\
	{D_6} &&&& 2 \\
	{*} & 2 & 0 & 2 & 0 & 2 & {*} \\
	{D_6} &&&& 2 \\
	{*} & 2 & 2 & 2 & 0 & 2 & {*} \\
	{D_6} &&&& 2 \\
	{*} & 2 & 2 & 2 & 2 & 2 & {*} \\
	{D_7} &&&& 2 \\
	2 & 2 & 0 & 2 & 0 & 2 & {*} \\
	{D_7} &&&& 2 \\
	2 & 2 & 2 & 2 & 0 & 2 & {*} \\
	{D_7} &&&& 2 \\
	2 & 2 & 2 & 2 & 2 & 2 & {*}
	\arrow[dotted, no head, from=2-4, to=2-3]
	\arrow[no head, from=2-4, to=2-5]
	\arrow[no head, from=2-5, to=1-5]
	\arrow[no head, from=2-5, to=2-6]
	\arrow[dotted, no head, from=2-6, to=2-7]
	\arrow[dotted, no head, from=4-4, to=4-3]
	\arrow[no head, from=4-4, to=4-5]
	\arrow[no head, from=4-5, to=3-5]
	\arrow[no head, from=4-5, to=4-6]
	\arrow[dotted, no head, from=4-6, to=4-7]
	\arrow[dotted, no head, from=6-3, to=6-2]
	\arrow[no head, from=6-4, to=6-3]
	\arrow[no head, from=6-4, to=6-5]
	\arrow[no head, from=6-5, to=5-5]
	\arrow[no head, from=6-5, to=6-6]
	\arrow[dotted, no head, from=6-6, to=6-7]
	\arrow[dotted, no head, from=8-3, to=8-2]
	\arrow[no head, from=8-3, to=8-4]
	\arrow[no head, from=8-4, to=8-5]
	\arrow[no head, from=8-5, to=7-5]
	\arrow[no head, from=8-5, to=8-6]
	\arrow[dotted, no head, from=8-6, to=8-7]
	\arrow[no head, from=9-5, to=10-5]
	\arrow[dotted, no head, from=10-2, to=10-1]
	\arrow[no head, from=10-3, to=10-2]
	\arrow[no head, from=10-4, to=10-3]
	\arrow[no head, from=10-5, to=10-4]
	\arrow[no head, from=10-5, to=10-6]
	\arrow[dashed, no head, from=10-6, to=10-7]
	\arrow[no head, from=11-5, to=12-5]
	\arrow[dotted, no head, from=12-2, to=12-1]
	\arrow[no head, from=12-2, to=12-3]
	\arrow[from=12-3, to=12-4]
	\arrow[no head, from=12-5, to=12-4]
	\arrow[no head, from=12-5, to=12-6]
	\arrow[dotted, no head, from=12-6, to=12-7]
	\arrow[no head, from=13-5, to=14-5]
	\arrow[dotted, no head, from=14-2, to=14-1]
	\arrow[no head, from=14-3, to=14-2]
	\arrow[no head, from=14-4, to=14-3]
	\arrow[no head, from=14-5, to=14-4]
	\arrow[no head, from=14-5, to=14-6]
	\arrow[dotted, no head, from=14-6, to=14-7]
	\arrow[no head, from=15-5, to=16-5]
	\arrow[no head, from=16-2, to=16-1]
	\arrow[no head, from=16-3, to=16-2]
	\arrow[no head, from=16-4, to=16-3]
	\arrow[no head, from=16-5, to=16-4]
	\arrow[no head, from=16-5, to=16-6]
	\arrow[dotted, no head, from=16-6, to=16-7]
	\arrow[no head, from=17-5, to=18-5]
	\arrow[no head, from=18-1, to=18-2]
	\arrow[no head, from=18-2, to=18-3]
	\arrow[no head, from=18-3, to=18-4]
	\arrow[no head, from=18-4, to=18-5]
	\arrow[no head, from=18-5, to=18-6]
	\arrow[dotted, no head, from=18-6, to=18-7]
	\arrow[no head, from=20-1, to=20-2]
	\arrow[no head, from=20-2, to=20-3]
	\arrow[no head, from=20-3, to=20-4]
	\arrow[no head, from=20-4, to=20-5]
	\arrow[no head, from=20-5, to=19-5]
	\arrow[no head, from=20-5, to=20-6]
	\arrow[dotted, no head, from=20-6, to=20-7]
\end{tikzcd}
        \caption{The weighted Dynkin diagrams of distinguished orbits of type $D_n$}
        \label{fig:D_n}
    \end{figure}
   \begin{figure}[h!]
        \centering
\begin{tikzcd}[row sep=1.0em]
	{E_6} &&& 2 \\
	{*} & 2 & 2 & 2 & 2 & 2 \\
	{E_6(a_1)} &&& 2 \\
	{*} & 2 & 2 & 0 & 2 & 2 \\
	{E_6(a_2)} &&& 0 \\
	{*} & 2 & 0 & 2 & 0 & 2
	\arrow[dotted, no head, from=2-1, to=2-2]
	\arrow[no head, from=2-2, to=2-3]
	\arrow[no head, from=2-3, to=2-4]
	\arrow[no head, from=2-4, to=1-4]
	\arrow[no head, from=2-4, to=2-5]
	\arrow[no head, from=2-5, to=2-6]
	\arrow[no head, from=3-4, to=4-4]
	\arrow[dotted, no head, from=4-2, to=4-1]
	\arrow[no head, from=4-3, to=4-2]
	\arrow[no head, from=4-4, to=4-3]
	\arrow[no head, from=4-4, to=4-5]
	\arrow[no head, from=4-5, to=4-6]
	\arrow[no head, from=5-4, to=6-4]
	\arrow[dotted, no head, from=6-2, to=6-1]
	\arrow[no head, from=6-3, to=6-2]
	\arrow[no head, from=6-4, to=6-3]
	\arrow[no head, from=6-4, to=6-5]
	\arrow[no head, from=6-5, to=6-6]
\end{tikzcd}
        \caption{The weighted Dynkin diagrams of distinguished orbits of type $E_6$}
        \label{fig:E6}
    \end{figure}
   \begin{figure}[h!]
        \centering
\begin{tikzcd}[row sep=1.0em]
	{E_7} &&&& 2 \\
	{*} & 2 & 2 & 2 & 2 & 2 & 2 \\
	{E_7(a_1)} &&&& 2 \\
	{*} & 2 & 2 & 2 & 0 & 2 & 2 \\
	{E_7(a_2)} &&&& 2 \\
	{*} & 2 & 0 & 2 & 0 & 2 & 2 \\
	{E_7(a_3)} &&&& 0 \\
	{*} & 2 & 2 & 0 & 2 & 0 & 2 \\
	{E_7(a_4)} &&&& 0 \\
	{*} & 2 & 0 & 0 & 2 & 0 & 2 \\
	{E_7(a_5)} &&&& 0 \\
	{*} & 2 & 0 & 0 & 2 & 0 & 0
	\arrow[no head, from=1-5, to=2-5]
	\arrow[dotted, no head, from=2-1, to=2-2]
	\arrow[no head, from=2-2, to=2-3]
	\arrow[no head, from=2-3, to=2-4]
	\arrow[no head, from=2-4, to=2-5]
	\arrow[no head, from=2-5, to=2-6]
	\arrow[no head, from=2-6, to=2-7]
	\arrow[no head, from=3-5, to=4-5]
	\arrow[dotted, no head, from=4-2, to=4-1]
	\arrow[no head, from=4-3, to=4-2]
	\arrow[no head, from=4-4, to=4-3]
	\arrow[no head, from=4-4, to=4-5]
	\arrow[no head, from=4-5, to=4-6]
	\arrow[no head, from=4-6, to=4-7]
	\arrow[no head, from=5-5, to=6-5]
	\arrow[dotted, no head, from=6-2, to=6-1]
	\arrow[no head, from=6-3, to=6-2]
	\arrow[no head, from=6-4, to=6-3]
	\arrow[no head, from=6-4, to=6-5]
	\arrow[no head, from=6-5, to=6-6]
	\arrow[no head, from=6-6, to=6-7]
	\arrow[no head, from=7-5, to=8-5]
	\arrow[dotted, no head, from=8-2, to=8-1]
	\arrow[no head, from=8-3, to=8-2]
	\arrow[no head, from=8-4, to=8-3]
	\arrow[no head, from=8-5, to=8-4]
	\arrow[no head, from=8-5, to=8-6]
	\arrow[no head, from=8-6, to=8-7]
	\arrow[no head, from=9-5, to=10-5]
	\arrow[dotted, no head, from=10-2, to=10-1]
	\arrow[no head, from=10-3, to=10-2]
	\arrow[no head, from=10-4, to=10-3]
	\arrow[no head, from=10-5, to=10-4]
	\arrow[no head, from=10-5, to=10-6]
	\arrow[no head, from=10-6, to=10-7]
	\arrow[no head, from=11-5, to=12-5]
	\arrow[dotted, no head, from=12-2, to=12-1]
	\arrow[no head, from=12-3, to=12-2]
	\arrow[no head, from=12-4, to=12-3]
	\arrow[no head, from=12-5, to=12-4]
	\arrow[no head, from=12-5, to=12-6]
	\arrow[no head, from=12-6, to=12-7]
\end{tikzcd}
        \caption{The weighted Dynkin diagrams of distinguished orbits of type $E_7$}
        \label{fig:E7}
    \end{figure}
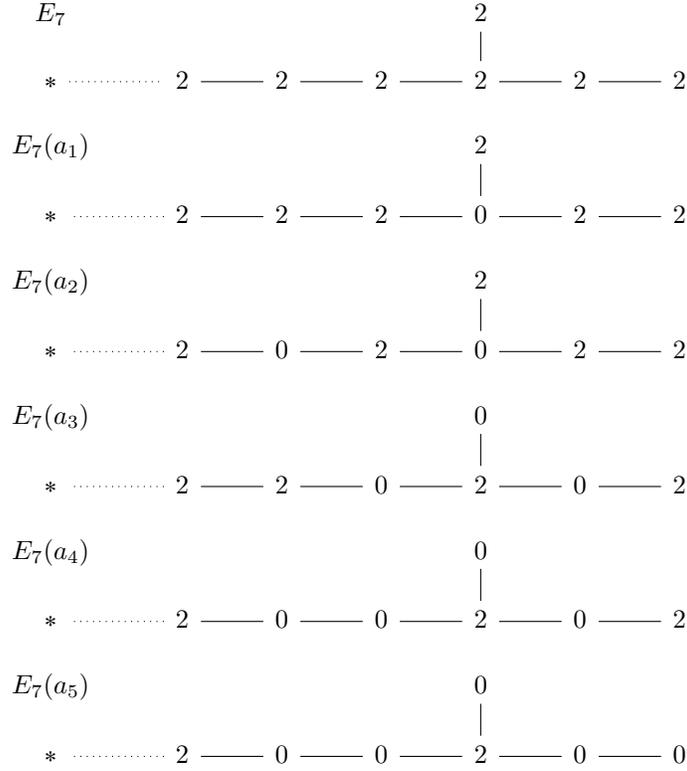

\newpage \section{Automorphic forms for unitary groups}

We now turn to an application of the smoothness result found in Section 3. In this section, we define the space of ordinary automorphic forms and the Hecke algebra attached to it. We then state a freeness result, which we will prove in the final section of this paper.

Let $l$ be a prime. Suppose $F^+$ is a totally real number field with an imaginary quadratic extension $F$ such that all primes $v$ of $F^+$ above $l$ split in $F$. 
Let $S_l$ be the set of all primes of $F^+$ that lie above $l$. 
Let $L$ be a finite extension of $\Q_l$ with ring of integers $\Ocal$ and residue field $\F$. Let $\bar{L}$ be a choice of algebraic closure. We will assume that $L$ is large enough that all embeddings $F\hookrightarrow \bar{L}$ lie inside $L$.
Let $c\in \Gal(F/F^+)$ be the unique non-trivial element, given by complex conjugation. For $a\in F$, we will denote $c(a)$ by $\bar{a}$ when convenient. 

\subsection{Unitary groups}

Consider $D/F$ a central simple algebra of $F$-dimension $n^2$, and let $S_D$ be a finite set of primes of $F^+$ that split in $F$.
Suppose that 

\begin{itemize}
    \item $D$ splits at all places $w$ of $F$ that do not lie above any place in $S_D$;
    \item There is an isomorphism $D^{op}\cong D\otimes_{F,c}F$ of $F$-algebras;
    \item The intersection $S_D\cap S_l$ is empty; 
    \item $D_w$ is a division algebra at all places $w$ of $F$ above a place in $S_D$;
    \item Either $n$ is odd, or $n$ is even and $\frac{n}{2}[F^+:\Q]+\#S_D\equiv 0 \textrm{ } (\textrm{mod }2)$.
\end{itemize}
Because of conditions 1 (note that all places in $S_D$ split), 2 and 5, 
we can find an involution of the second kind on $D$ by \cite{CHT08} Section 3.3 (p.95). That is, we may construct a map

$$^*:D\rightarrow D$$
such that:
\begin{itemize}
    \item $^*$ is an $F^+$ linear anti-automorphism of $D$;
    \item $(a^*)^*=a$ for all $a\in D$;
    \item The involution $^*$ coincides with complex conjugation when restricted to $F$. 
\end{itemize}

In addition, we assume that this involution of the second kind is positive. That is, for any $\gamma\in D\setminus \{0\}$,
$$\tr_{F:\Q}[\tr_{D/F}(\gamma\gamma^*)]>0.$$
Such an involution gives rise to a positive Hermitian form $\langle,\rangle:D\times D \rightarrow D$ given by $\langle x,y\rangle =x^*y$.

Let $\Ocal_D$ be an order in $D$ such that $\Ocal_D^*=\Ocal_D$ and such that $\Ocal_{D,v}$ is a maximal order of $D_v$ for any split prime $v$ of $F^+$, as in Section 3.3 of \cite{CHT08}.
Define the unitary group over $\Ocal_{F^+}$ whose $R$-points (where $R$ is an $\Ocal_{F^+}$-algebra) are given by $G_D(R)=\{g\in (\Ocal_D\otimes_{\Ocal_{F^+}} R)^{\times}:g^*g=1\}$. 
Then $G_D$ is an algebraic group over $\Ocal_{F^+}$.
By the positivity condition, we have $G_{D,v}\cong U(n)$ at each infinite place $v$ of $F^+$.

For each prime $v$ of $F^+$ that splits in $F$, choose a prime $\tilde{v}$ of $F$ lying above $v$. This choice allows us to give an isomorphism $i_{\tilde{v}}:G_D(F^+_v)\xrightarrow{~}(D\otimes_F F_{\tilde{v}})^\times$ which restricts to an isomorphism $G_D(\Ocal_{F^+,v})\cong (\Ocal_{D,\tilde{v}})^{\times}$ as in Section 3.3 of \cite{CHT08}. 
Note that when $v\notin S_D$ is split in $F$, then $G_D$ is split, so that $G_D(F^+_v)\cong (D\otimes_{F^+}F^+_v)^{\times} \cong (D\otimes_F F_{\tilde{v}})^{\times}\cong \GL_n(F_{\tilde{v}})$.
If $T$ is a set of primes of $F^+$ that splits in $F$, set $\tilde{T}=\{\tilde{v}|v\in T\}$.

\subsection{Automorphic forms of $G_D$}
We define the automorphic forms for $G_D$ as in \cite{gross_1999} and \cite{CHT08}. To do this, we recall some facts from the representation theory of reductive groups.

Let $G$ be a split reductive group defined over $L$, and let $T\subseteq B \subseteq G $ be a choice of split maximal torus and Borel subgroup of $G$.
Recall that finite-dimensional simple modules of $G$ are uniquely determined by their highest weight in the character group of the torus $X^{\bullet}(T):=\Hom(T,\G_m)$, and that such a representation exists if and only if this highest weight $\nu$ lies in a dominant Weyl chamber. 

In the case of $\GL_n$ and the standard upper Borel subgroup and maximal torus (defined over $L$), the set of weights naturally corresponds to $\Z^n$, and the set of dominant weights is $\Z^n_+:=\{\nu=(\nu_1,...,\nu_n)\in \Z^n: \nu_i\geq \nu_{i+1} \forall i\}$.
We set the $L$-vector space $W_{\nu}$ to be the irreducible representation of highest weight $\nu$. We will need to choose a $\Ocal$ lattice of $W_{\nu}$, which we do in the style of Section 1.1 of \cite{geraghty_2018} as follows. 
Note that $\GL_n$, $B$, and $T$ are defined over $\Ocal$. For a dominant weight $\nu$, set $\xi_{\nu}$ to be the induced representation $\textrm{Ind}_{B}^{\GL_n}(w_0(\nu))_{/\Ocal}$ of the algebraic group $\GL_{n/\Ocal}$ defined as the functor whose $R$ points 
\[\Ind_B^G(w_0(\nu)):=\big\{f\in R[\GL_n]: f(bg)=w_0(\nu)(b).f(g) \forall g\in \GL(R), b\in B(R) \big\},\] 
where $w_0$ the longest element of the Weyl group. By Proposition II.2.2 and Corollary II.5.6 of \cite{Jantzen}, the representation $\xi_{\nu}$ is irreducible of highest weight $\nu$. We denote by $M_{\nu}$ the representation given by the $\Ocal$-points of $\xi_{\nu}$, so that $ M_{\nu}\otimes_{\Ocal}L\cong \xi_{\nu}(L)\cong W_{\nu}$.
\begin{remark}
    We remark that $w_0$ arises here because our convention chooses $B$ as the Borel of \emph{upper} triangular matrices, whereas Jantzen induces from the Borel of \emph{lower} triangular matrices. These two choices of Borel subgroup are related by $w_0$. 
\end{remark}

The finite-dimensional algebraic representations in $L$ vector spaces of $G_{D,F^+_l}\cong \prod_{w\in\tilde{S_l}}\GL_{n,F_w}$ are characterised by the sequence of dominant weights, one for each embedding corresponding to $w\in \tilde{S_l}$.
We define the set as $W=(\Z^n_+)^{\Hom(F^+,L)}$. 
For each $\mu\in W$, we can now define the algebraic representation of $G_{D/\Ocal_{F^+}}$ with highest weight $\mu$ by $M_{\mu}=\bigotimes_{\tau\in\Hom(F^+,L),\Ocal}M_{\nu_{\tau}}$, and $W_{\mu}=M_{\mu}\otimes_{\Ocal} L$. 

For each $v\in S_D$, choose a finite-free $\Ocal$-module representation $\rho_v:G_D(\Ocal_{F^+,v})\rightarrow \GL(M_v)$ with open kernel such that $M_v\otimes \bar{L}$ is irreducible. Set $M_{\{\rho_v\}}=\bigotimes_{v\in S_D}M_v$. We set $M_{\mu,\{\rho_v\}}=M_{\mu}\otimes M_{\{\rho_v\}}$. 

\begin{definition}
Let $\lambda=(\mu,\{\rho_v\})$ be as above. We define the space of automorphic forms for $G_D$ of weight $\lambda$ with $A$-coefficients $S_{\lambda}(A)$, where $A$ is an $\Ocal$-module, as the space of functions 

\[f:G_D(F^+)\backslash G_D(\A)\rightarrow M_{\lambda}\otimes_{\Ocal} A\]
such that there is an open compact subgroup 
$$U\subset G_D(\mathbb{A}^{\infty,S_l}_{F^+})\times G_D(\Ocal_{F^+,l})$$
with 
$$u|_{S_l\cup S_D} f(gu)=f(g)$$
for all $g\in G_D(\A)$ and $u\in U$ where $u|_{S_l\cup S_D}$ denotes the action of $u$ on $M_{\lambda}$ factoring through $\prod_{v\in S_D\cup S_l}G_D(F^+_v)$.
\end{definition}

Notice that $S_{\lambda}(A)$ is a smooth representation of $G_D(\A)$ under the action 
$$(h.f)(g)=h|_{S_l\cup S_D} f(gh).$$
(Again, $h|_{S_l\cup S_D}$ acts through the representation of $G_D(F^+_l)\times \prod_{v\in S_D}G_D(F^+_v)$ on $M_{\lambda}$). We denote by $S_{\lambda}(U,A)=S_{\lambda}(A)^U$ the invariants under this action.

\subsection{Hecke Operators}

For much of what remains, the argument will be a slight adaptation on that in \cite{geraghty_2018}, the important details of which can be found in Sections 2 and 4.
Let $T$ be a finite set of places of $F^+$ containing $S_D\cup S_l$ such that all places in $T$ split in $F$, and let $\tilde{T}$ be a set of primes of $F$ above those in $T$ as defined before.
Fix an open compact subgroup $U=\prod_{v} U_{v}$ of $G_D(\A)$ such that $U_v$ is hyperspecial at all places $v$ outside $T$.
Suppose further that $U$ is sufficiently small; that is, there is a place $v$ such that $U_v$ contains no non-identity finite order elements. 
We define the Hecke operators on the subspace $S_{\lambda}(U,A)$.

\paragraph{Hecke operators at unramified places}
Let $v$ be a place of $F^+$ split in $F$ and $\tilde{v}$ be a place in $F$ over $v$. Let $\varpi_{\tilde{v}}$ be a uniformiser.
We can define the Hecke operators as the double coset operators:

$$T_v^{(i)}=\left[i_{v}^{-1}\left(\GL_n(\mathcal{O}_{F,\tilde{v}})\begin{pmatrix} \varpi_{\tilde{v}} I_i & 0 \\ 0 & I_{n-i} \end{pmatrix} \GL_n(\mathcal{O}_{F,\tilde{v}})\right)\times U^{v}\right]$$

\paragraph{Hecke operators at places dividing $l$.}

At places dividing the residual characteristic of $\mathcal{O}$, we set $\alpha_{\Tilde{v}}^{(i)}=\begin{pmatrix} \varpi_{\Tilde{v}} I_i & 0 \\ 0 & I_{n-i} \end{pmatrix}$, and define
$$U_{\mu,{\Tilde{v}}}^{(i)}=(w_0\mu_v)(\alpha_{\Tilde{v}}^{(i)})^{-1}[U\alpha_{\Tilde{v}}^{(i)} U ]$$
where $w_0$ is the longest element of the Weyl group of $\GL_n$ and $\mu=(\mu_v)\in W$ with $\mu_v$ the dominant weight for the corresponding embedding $F^+\hookrightarrow L$.

We make the following adjustment to the group $U$. 

\begin{definition}
For $v$ a place of $F^+$ above $l$ and $b$ a positive integer, let $I^{b}(\Tilde{v})$ be the set of matrices in $\GL_n(\Ocal_{F_{\Tilde{v}}})$ which are upper-triangular and unipotent \textrm{mod} $\Tilde{v}^b$. 
Define $U(l^{b})=\prod_{v \in S_l}i_v^{-1}(I^{b}(\Tilde{v}))\times U^{l}\subseteq G_D(\A)$ where $U^l$ denotes the product $\prod_{v\nmid l}U_v$. 
\end{definition}
In the case with the group $U(l^{b})$, we further define the following diamond operators:
\begin{definition} Let $T_n$ be the maximal torus inside $\GL_n$ as before.
    For $v\in S_l$, and $u\in T_n(\mathcal{O}_{F_{\Tilde{v}}})$, define $\langle u\rangle$ as the operator 

$$[U(l^{b})uU(l^{b})]$$
on $S_{\lambda}(U(l^{b}),A)$. For $u\in T_n(\mathcal{O}_{F^+,l})=\prod_{v\in S_l}T_n(\mathcal{O}_{F_{v}})\cong\prod_{v\in S_l}T_n(\mathcal{O}_{F_{\Tilde{v}}})$, define $\langle u\rangle=\prod_{v \in S_l}\langle u_{\tilde{v}}\rangle$.

\end{definition}

Let $A$ be an $\Ocal$-algebra and $M$ an $A$-module.
Define the Hecke algebra $\mathbb{T}^T=\mathbb{T}^T(U(l^{b}),M)$ as the $A$-subalgebra of 
$\End(S_{\lambda}(U(l^{b}),M))$ generated by all the operators 
\[\{T_{\tilde{v}}^{(i)},(T_{\tilde{v}}^{(n)})^{-1})| v\notin T, \textrm{  split in } F\}\cup\{U_{\mu,{\Tilde{v}}}^{(i)}| v\in S_l\}\cup\{\langle u\rangle|u\in T_n(\Ocal_{F^+,l})\}.\]

Notice that the map $u\mapsto \langle u \rangle$ defines a group homomorphism 
$$T_n(\mathcal{O}_{F^+,l})\rightarrow \mathbb{T}^T(U(l^{b}),M)^{\times}$$
which factors through $T_n(\mathcal{O}_{F^+,l}/l^b)=\prod_{v\in S_l}T_n(\mathcal{O}_{F^+,v}/v^b)$.

\subsection{Big ordinary Hecke algebras and the action of $\Lambda$}

From this point on, we wish to focus on the cases where $A\in\cat{Mod}_{\Ocal}$ is one of $\Ocal, L/\Ocal$ or a finite module $\Ocal/\pi^n\Ocal$. 

Recall from Hida theory, as fully explained in Section 2.4 of \cite{geraghty_2018}, that for any place $v\in S_l$ and any $i$, the operator $e^{(i)}_{v}:=\lim_{n\rightarrow \infty} (U_{\mu,{\Tilde{v}}}^{(i)})^{n!}$ is a projection on $S_{\lambda}(U,A)$. 
We can further define the projection $e=\prod_{v,i} e^{(i)}_{v}$. 
We define the ordinary submodule $S^{\text{ord}}_{\lambda}(U,A):=e.S_{\lambda}(U,A)$ as the image of this projection. 
Notice, since all the Hecke operators commute, that this is a Hecke invariant submodule. We also define $\mathbb{T}^{T,\text{ord}}(U(l^{b}),A)=e\mathbb{T}^T(U(l^{b}),A)$.

\begin{definition}
Let $T_n$ be the maximal torus of $\GL_n$ as before. For $b\geq 1$, let $T_n(l^b)$ be the kernel of $T_n(\mathcal{O}_{F^+,l})\rightarrow T_n(\mathcal{O}/l^b)$. 
We define $\Lambda$ as the algebra 
$$\Lambda=\mathcal{O}[[T_n(l)]]=\varprojlim_{b'\geq 1} \mathcal{O} [T_n (l) / T_n (l^{b'})].$$
\end{definition}

We denote by $a_N$ the kernel of the map $\Lambda\rightarrow \mathcal{O}[T_n(l)/T_n(l^N)]$. 
Since $U$ is sufficiently small, we see $S^{\text{ord}}_{\lambda}(U(l^{b,c}),A)$ is a free $\Lambda/a_b$-module, through the action of $T_n(\mathcal{O}_{F^+,l})$, and hence we have an inclusion $\Lambda/a_b \hookrightarrow \mathbb{T}(U(l^{b}),L/\Ocal)$ by Proposition 2.20 of \cite{geraghty_2018}.

\subsubsection{Infinite level}

We need to consider the big ordinary Hecke algebra. Set 

$$\mathbb{T}^{T,\ord}(U(l^{\infty}),A)=\varprojlim_{b>0} \mathbb{T}^{T,\ord}(U(l^{b}),A)$$
and 

$$S^{\ord}(U(l^{\infty}),A)=\varinjlim_{b>0} S^{\ord}(U(l^{b}),A).$$
Notice that because of the inclusions $\Lambda/a_b\hookrightarrow \T^{T,\ord}(U(l^{b,c}),L/\Ocal)$, we get an inclusion $\Lambda \hookrightarrow \T^{T,\ord}(U(l^{\infty}),L/\Ocal)$, and we see that $S^{\ord}(U(l^\infty),L/\Ocal)$ is a discrete $\Lambda$-module, so its Pontryagin dual is a compact $\Lambda$-module. (and in fact is finite free, by Proposition 2.20 of \cite{geraghty_2018} since we assume $U(l)$ is sufficiently small.)

We can now give a statement of a theorem that we prove by the application of Theorem \ref{theorem-smooth}. Under certain hypotheses (to be determined in Section 5), we have Theorem \ref{theorem locfree}, which states:

\begin{namedtheorem}{Theorem \ref{theorem locfree}}
    The $\mathbb{T}^{T,\ord}(U(l^{\infty}),L/\Ocal)$-module $S^{\ord}(U(l^{\infty}),L/\Ocal)^{\vee}$ is locally free over the generic fibre $\mathbb{T}^{T,\ord}(U(l^{\infty}),L/\Ocal)[1/l]$.
\end{namedtheorem}
    
As a consequence, the multiplicity of $S^{\ord}(U(l^{\infty}),L/\Ocal)^{\vee}$ is the same at every characteristic zero point of $\mathbb{T}^{T,\ord}(U(l^{\infty}),L/\Ocal)$, and thus we expect the multiplicity of non-classical points (those corresponding to Hida families of ordinary automorphic forms) is the same as at classical automorphic forms in $S_{\lambda}(U,A)$. 

\section{Galois representations and deformation rings}

\subsection{Local deformation rings}

In this section, we let $G_{F^+}$ and $G_F$ be the absolute Galois groups of $F^+$ and $F$ respectively, and $G_{F^+,v}$, $G_{F,w}$ be the decomposition groups at the places $v,w$ of $F^+$ and $F$, respectively.

We now define a deformation problem. Let $v\in S_D$ with residue field of size $q_v$, and let $X_{\textrm{St}}\subseteq S_{\GL_n}$ be the irreducible component corresponding to the regular nilpotent orbit. We say that an $n$-dimensional representation $\rho:G_{F^+,v}\rightarrow \GL_n(A)$ is Steinberg if the representation $\rho$ lies in the $A$-points of this irreducible component $X_{\textrm{St}}$.
When $A$ is a characteristic 0 field and $WD(\rho)=(r,N)$ is the Weil-Deligne representation obtained from $\rho$, then this condition is equivalent to the condition $r$ being unramified and the eigenvalues of $r(\Frob_{q_v})$ are in the ratio $q_v^{n-1}:q_v^{n-2}:...:q_v:1$.

Let $C_{\Ocal}$ be the category of local Artinian $\Ocal$-algebras with residue field $\F$, (that is, the category of coefficient rings as defined in Mazur's article in \cite{CSS97}). 
For each $v\in S_D$ and Steinberg representation $\bar{\rho}_v:G_{F,\tilde{v}}\rightarrow \GL_n(\F)$ define a functor 
\begin{align*}
    D^{n,\square}_{\bar{\rho}_v}: \ & \mathcal{C}_{\Ocal}\rightarrow \mathfrak{Set } \\
    & A \mapsto \{\textrm{Steinberg liftings of } \bar{\rho}_v \textrm{ to } A\}
\end{align*}
This functor is pro-representable by the complete Noetherian local ring $R^{\square,\st}_v:=\Ocal^{\wedge}_{X_{\textrm{St}},\bar{\rho}}$.
 We notice that when we view $X_{\textrm{St}}$ as a scheme over $L$, Theorem \ref{theorem-smooth} tells us, since $q$ is not a root of unity in $L$, and is therefore considerate towards $G_L$, that any localisation of $R^{\square,\st}_v[1/l]$ is a regular ring and thus that $R^{\square,\st}_v[1/l]$ is regular. 

We recall the definition of $\tilde{r}$-discrete series found in Section 2.4.5 in \cite{CHT08}.

\begin{definition}
Let $\tilde{r}_v:G_{F,{\tilde{v}}}\rightarrow \GL_d(\Ocal)$ be a representation such that:
\begin{enumerate}
    \item $\tilde{r}_v\otimes \F$ is absolutely irreducible ($\F$ the residue field of $\Ocal$);
    \item Every irreducible subquotient of $(\tilde{r}_v\otimes \F)|_{I_{\tilde{v}}}$ is absolutely irreducible;
    \item $\tilde{r}\otimes \F \not \cong \tilde{r}\otimes \F(i)$ for each $i=0,...,m$ where $\_(i)$ denotes the twist by the unramified character sending $\Frob\mapsto q^i$. 
\end{enumerate}
Whenever $R$ is an $\Ocal$ algebra, we say a representation $\rho:G_{F,\tilde{v}}\rightarrow \GL_{md}(R)$ is\newline $\tilde{r}$-discrete series if there is a decreasing filtration $\{\Fil^i\}$ of $\rho$ by $R$-direct summands such that 
\begin{enumerate}
    \item $\gr^i\rho \cong \gr^0\rho (i)$ for $i=0,...,m-1$;
    \item $\gr^0\rho|_{I,\tilde{v}}\cong \tilde{r}|_{I,\tilde{v}}\otimes_{\Ocal}R$.
\end{enumerate} 
\end{definition}

\begin{prop}{\label{prop-stacks}}

Suppose $l>h_G$. Let $\tilde{r}$ be a rank $d$ representation as above, and let $n$ be an integer with $d|n$. Let $X_{\tilde{r},n}$ be the moduli space of framed
$\tilde{r}$-discrete series representations of rank $n$, defined over $\Ocal$. Then the base change $(X_{\tilde{r},n})_L$ is smooth over $L$.
\end{prop}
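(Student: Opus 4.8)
The plan is to reduce to the Steinberg case already settled by Theorem~\ref{theorem-smooth}. Since $L$ has characteristic zero (so is perfect) and $X_{\tilde r,n}$ is of finite type over $\Ocal$, the base change $(X_{\tilde r,n})_L$ is smooth over $L$ if and only if it is regular, i.e. if and only if the completed local ring $\Ocal^{\wedge}_{X_{\tilde r,n},x}[1/l]$ is regular at every closed point $x$; by the moduli interpretation this ring pro-represents the framed $\tilde r$-discrete series deformation functor $D^{\square}_{\rho_0}$ of the representation $\rho_0\colon G_{F,\tilde v}\to\GL_n(L')$ corresponding to $x$, where $L'$ is the (finite) residue field of $x$. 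So it suffices to show $D^{\square}_{\rho_0}$ is formally smooth over $L'$ for every such $\rho_0$. Writing $n=md$ and using $l>n=h_{\GL_n}$ (or, over the characteristic-zero field $L$, simply Proposition~2.7 of \cite{DHKM20}), the exponential and logarithm maps of Proposition~\ref{prop2.4}(2) let us pass freely between $\rho_0$ and its Weil--Deligne datum together with the filtration $\Fil^{\bullet}$.

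The key structural input is Section~2.4.5 of \cite{CHT08}. Conditions (1)--(2) on $\tilde r$ say that $\tilde r_L$ is absolutely irreducible and cohomologically rigid: $H^0(G_{F,\tilde v},\ad\tilde r)=L$ and the framed deformations of $\tilde r$ over Artinian local $L$-algebras are exactly the unramified twists $\tilde r\otimes\chi$. Condition (3), $\tilde r\not\cong\tilde r(i)$, gives $\Hom(\gr^i\rho,\gr^j\rho)=0$ for $i\neq j$, which pins down $\Fil^{\bullet}$ as canonical and forces the graded pieces, the extensions between them, and the monodromy to assemble into a genuine length-$m$ ``special'' configuration. Concretely, over an Artinian local $L'$-algebra $A$, a $\tilde r$-discrete series lift of $\rho_0$ is, after a choice of basis identifying the underlying $A$-module with $A^d\otimes_A A^m$, uniquely of the form $\tilde r\otimes_{\Ocal}\chi\otimes_A\sigma$, where $\chi\colon G_{F,\tilde v}\to A^{\times}$ is unramified and $\sigma$ is the Weil--Deligne parameter of an unramified twist of the Steinberg representation of $\GL_m$ --- that is, a point of the component of $S_{\GL_m}$ attached to the regular nilpotent orbit, which we write $X_{\mathrm{St}}=X_{C_{\mathrm{reg}}}$ as in Section~3. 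This identification is functorial in $A$, and unwinding it produces a formally smooth morphism $D^{\square}_{\rho_0}\to D^{\square}_{\sigma_0}$, where $\sigma_0\in(X_{\mathrm{St}})_{L'}$ is the $\GL_m$-parameter underlying $\rho_0$ and the fibres record the choice of unramified twist ($\in\G_m$, formally smooth) and of the remaining framing ($\in\GL_n$, formally smooth). Hence $D^{\square}_{\rho_0}$ is formally smooth as soon as $D^{\square}_{\sigma_0}$ is.

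It remains to note that $(X_{\mathrm{St}})_L\subseteq S_{\GL_m,L}$ is smooth over $L$. The regular nilpotent orbit is the unique distinguished nilpotent orbit of $\GL_m$, and $q$ is considerate towards $\GL_m$ over $L$ in the sense of Definition~\ref{Def: qconsiderate}: $q$ is a prime power and $L$ has characteristic zero, so $q^k-1\in L^{\times}$ for all $k$. Thus Theorem~\ref{theorem-smooth} shows $(X_{\mathrm{St}})_L$ is smooth over $L$, so $D^{\square}_{\sigma_0}$ is formally smooth, hence so is each $D^{\square}_{\rho_0}$, and therefore $(X_{\tilde r,n})_L$ is smooth over $L$. (For $d=1$ and $\tilde r$ the trivial character this recovers the observation, already made before this proposition, that $\Ocal^{\square,\st}_v[1/l]$ is regular.)

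The main obstacle is making the structural claim of the second paragraph rigorous: that over Artinian $L'$-algebras the assignment $\rho\mapsto(\chi,\sigma,\text{framing})$ is an equivalence compatible with all deformations, and in particular yields the stated formally smooth morphism $D^{\square}_{\rho_0}\to D^{\square}_{\sigma_0}$. This is precisely where conditions (1)--(3) are used essentially --- (1)--(2) to make deformations of $\tilde r$ be only unramified twists, and (3) to rigidify $\Fil^{\bullet}$ via $\Hom(\gr^i,\gr^j)=0$ --- and it is essentially the content of Section~2.4.5 of \cite{CHT08}, which I would cite and reinterpret moduli-theoretically rather than reprove. One should also confirm along the way that ``$\tilde r$-discrete series'' genuinely forces $\sigma$ into the Steinberg component $X_{\mathrm{St}}$ (and not into a larger, possibly singular, locus of $S_{\GL_m}$), so that Theorem~\ref{theorem-smooth}, and not the negative result Theorem~\ref{theorem -notsmooth}, is what applies.
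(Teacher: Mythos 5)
Your overall strategy coincides with the paper's: untwist by $\tilde{r}$ to reduce the $\tilde{r}$-discrete series moduli problem to the rank-$m$ unipotent (Steinberg) problem, then invoke Theorem \ref{theorem-smooth} for the regular orbit, which is the unique distinguished orbit of $\GL_m$. Your closing worry about landing in $X_{\textrm{St}}$ rather than a larger locus of $S_{\GL_m}$ is indeed harmless: in characteristic zero the Frobenius eigenvalues on the graded pieces are in ratio $q^{m-1}:\cdots:q:1$ and pairwise distinct, so $\Phi$ is regular semisimple of the required shape and the commutation relation confines $N$ to the corresponding $q$-eigenspace, placing $(\Phi,N)$ in $X_{\textrm{St}}$.

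The genuine gap is the step you propose to outsource. The functorial equivalence ``$\tilde{r}$-discrete series datum $=$ unramified twist $\otimes$ rank-$m$ Steinberg datum $\otimes\,\tilde{r}$'' is not available off the shelf from Section 2.4.5 of \cite{CHT08}: that section defines the notion and yields, e.g., uniqueness of the filtration (Lemma 2.4.25), but it works with the Galois representation and its filtration, not with the Weil--Deligne pair, and in particular it contains nothing that identifies the monodromy operator of an arbitrary $\tilde{r}$-discrete series Weil--Deligne representation as being pulled back from the rank-$m$ side. Proving exactly this equivalence is the entire content of the paper's proof: one shows that $(\rho',N)\mapsto(\rho',N)\otimes\tilde{r}$ gives an isomorphism $S^{\textrm{WD}}_{\mathds{1}}\xrightarrow{\sim}S^{\textrm{WD}}_{\tilde{r}}$ of stacks over $L$, using Clifford theory and the hypotheses on $\tilde{r}$ to write $\tilde{r}|_I\cong\tau\oplus\tau^{\Frob}\oplus\cdots\oplus\tau^{\Frob^{k-1}}$ with pairwise non-isomorphic absolutely irreducible summands, Schur's lemma to compute $\End_{R[I]}(\tilde{r}^m)\cong M_m(R)^k$, and the relation $\Frob\cdot N=qN$ together with the Frobenius permutation of the $\tau^{\Frob^i}$ to force $N$ into the image of the diagonal $M_m(R)\hookrightarrow M_m(R)^k$, i.e.\ $N=N_0\otimes\id_{\tilde{r}}$; the quasi-inverse is $\bigl(\Hom_I(\tau,-)\otimes\chi_{\tilde{r}}^{-1},\Delta^{-1}\bigr)$, and both composites are checked to be the identity using semisimplicity of the $I$-action in characteristic zero and the computation of the graded pieces $\gr^i\cong\tilde{r}\otimes\chi(i)$. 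After that, $S_{\mathds{1}}$ is identified with $[X_m/\GL_m]$ by a choice of Frobenius and tame generator, and Theorem \ref{theorem-smooth} finishes the argument. So your outline (whether phrased for stacks, as in the paper, or pointwise via framed deformation functors, as you do) is the right one, but a complete proof must actually establish the untwisting equivalence --- above all the diagonality of $N$ --- rather than cite \cite{CHT08} for it.
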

\begin{proof}
Let $S_{\tilde{r}}$ be the moduli stack over $\Ocal$ of $n$-dimensional $\tilde{r}$-discrete representations, so that $S_{\tilde{r}}\cong [X_{\tilde{r}}/\GL_n]$ and let $S_{\mathds{1}}$ be the stack of $m:=n/d$-dimensional $\mathds{1}$-discrete series representations. Let $S^{\textrm{WD}}_{\tilde{r}}$ be the stack over $L$ whose groupoid over $R$ consists of objects $(\rho',N)$ where $\rho'$ is a rank $n=dm$ $\tilde{r}$-discrete series representation with open kernel, and $N$ is an element of $\mathrm{End}_{R}(R^n)$ such that $\rho' N \rho'^{-1}=q^{\nu} N$. Define $S^{\textrm{WD}}_{\mathds{1}}$ analogously. 
Let $t_l$ be the homomorphism $t_l:I\rightarrow \Z_l$ sending any lift of the topological generator of tame inertia to $1\in\Z_l$. 
Recall that there is a morphism $S^{\textrm{WD}}_{\tilde{r}}\rightarrow S_{\tilde{r}}$ given by $(\rho',N)$ is sent to the unique representation $\rho$ given by $\rho(g)= \rho'(g)\exp(t_l(g)N)$ for $g\in I$ and $\rho(\Frob)=\rho'(\Frob)$. 
Recall that this is an isomorphism on the base change to $L$.

Then we have an morphism of algebraic stacks 
$S^{\textrm{WD}}_{\mathds{1}}\rightarrow S^{\textrm{WD}}_{\tilde{r}}$ given by the morphism $(\rho',N) \mapsto (\rho',N) \otimes \tilde{r}$.
We claim that this is an isomorphism. By an exercise in Clifford theory and by assumptions on $\tilde{r}$, the restriction $\tilde{r}|_I$ can be written as a direct sum of pairwise non-isomorphic 
absolutely irreducible $I$-representations $\tau\oplus \tau^{\Frob}\oplus...\oplus \tau^{\Frob^{k-1}}$ for some $k\in \N$. As $\rho'$ is $\mathds{1}$-discrete series in characteristic zero, we see that $(\rho'\otimes \tilde{r})|_I\cong m(\tau\oplus \tau^{\Frob}\oplus...\oplus \tau^{\Frob^{k-1}})$.
Let $V_{\tilde{r}}(R)=\End_{R[I]}(\tilde{r}^m)$ be the space of $I$-equivariant maps of any representation in $S^{\textrm{WD}_{\tilde{r}}}(R)$, and define $V_{\mathds{1}}(R)=\End_{R[I]}(\mathds{1}^m)$ similarly. Note that the map
\begin{align}
    V_{\mathds{1}}(R) &\rightarrow V_{\tilde{r}}(R) \\
    N & \mapsto N\otimes \id_{\tilde{r}}
\end{align}
is injective, and hence is isomorphic onto its image. 
We claim that if $(\rho,N)\in S^{\textrm{WD}}_{\tilde{r}}(R)$, then $N$ is in the image of this map. 

First, note that $N$ is $I$-equivariant. We calculate using Schur's lemma that $V_{\tilde{r}}(R)\cong M_m(R)^k$, since each $\tau^{\Frob^i}$ is absolutely irreducible, and we see the above map corresponds to the diagonal map 
$\Delta: M_m(R)\rightarrow M_m(R)^k$.

The space $V_{\tilde{r}}(R)$ has a natural action of Frobenius on it, and under this action
 $N=(N_1,...,N_k)\in M_m(R)^k$ has $\Frob.(N_1,...,N_k)=q(N_1,...,N_k)$. 
Notice that $\Frob$ induces an isomorphism of the underlying spaces $\tau^m\rightarrow (\tau^{\Frob})^m$, which gives us a commutative diagram
\[\begin{tikzcd}
\tau^m\arrow{r}{\Frob} \arrow{d}{N_1} & (\tau^{\Frob})^m \arrow{d}{qN_2} \\
\tau^m \arrow{r}{\Frob} & (\tau^{\Frob})^m
\end{tikzcd}\]
Hence, we see $(qN_2,...,qN_k,qN_1)=q(N_1,...,N_{k-1},N_k)$, and thus $N$ lies in the image of the diagonal map. This proves the claim.

Let $\chi_{\tilde{r}}=\Hom_I(\tau,\tilde{r})$. Notice that this is an unramified character. 
We claim that $(\Hom_I(\tau,\_)\otimes{\chi_{\tilde{r}}^{-1}},\Delta^{-1}):S^{\textrm{WD}}_{\tilde{r}}\rightarrow S^{\textrm{WD}}_{\mathds{1}}$ is an inverse defining the equivalence. 

We first show that the composition $S^{\textrm{WD}}_{\tilde{r}}(R)\rightarrow S^{\textrm{WD}}_{\mathds{1}}(R) \rightarrow S^{\textrm{WD}}_{\tilde{r}}(R)$ is the identity.
For $(\Theta,N)\in S^{\textrm{WD}}_{\tilde{r}}(R)$, the previous claim gives us an isomorphism on the $N$-part of the stacks $S^{\textrm{WD}}_{\tilde{r}}(R)$, so we focus on the representation part. Since $I$ acts through a finite quotient, and $R$ is an algebra over a characteristic $0$-field, we see that $\Theta$ is semisimple and hence we get a sequence of $I$-representations isomorphisms:
\begin{align*}
    \Theta&\cong \bigoplus_{i=0}^{k-1} \Hom_I(\tau^{\Frob^i},\Theta)\otimes{\chi_{\tilde{r}}^{-1}}\otimes \tau^{\Frob^i} \\
    & \cong \Hom_I(\tau,\Theta)\otimes{\chi_{\tilde{r}}^{-1}}\otimes \bigoplus_{i=0}^{k-1} \tau^{\Frob^i}\\
    &\cong \Hom_I(\tau,\Theta)\otimes{\chi_{\tilde{r}}^{-1}}\otimes \tilde{r}
\end{align*}
To show this isomorphism also respects the $W_F$-action, we observe that each graded part of $\Theta$ has $\gr^i(\Theta)\cong \tilde{r}\otimes \chi(i)$ where $\chi$ is some unramified character. Then we obtain
$$\Hom_{L[I]}(\tau, \gr^i(\Theta))\cong \Hom_{L[I]}(\tau,\tilde{r}\otimes \chi(i))\cong \tilde{r}\otimes \chi(i)$$
so that both sides of the isomorphism are naturally $\tilde{r}\otimes \chi(i)$ as $W_F$-representations.
Hence, the composition $S^{\textrm{WD}}_{\tilde{r}}(R)\rightarrow S^{\textrm{WD}}_{\mathds{1}}(R) \rightarrow S^{\textrm{WD}}_{\tilde{r}}(R)$ is the identity. 

To show $S^{\textrm{WD}}_{\mathds{1}}(R)\rightarrow S^{\textrm{WD}}_{\tilde{r}}(R) \rightarrow S^{\textrm{WD}}_{\mathds{1}}(R)$ is the identity, let $\rho\in S_{\mathds{1}}(R)$. Then the natural map
\begin{align}
    \rho &\rightarrow \Hom_I(\tau,\rho \otimes \tilde{r})\\
    v &\mapsto \{w\mapsto v\otimes w\}
\end{align}
 defines an $I$ isomorphism. So we need only check that $\rho\otimes\chi_{\tilde{r}}$ and $\Hom_I(\tau,\rho \otimes \tilde{r})$ have the same action of Frobenius. This can be checked again, by looking at the character $\gr^i(\rho)$.
 Hence, we have exhibited an equivalence of categories $S_{\mathds{1}}\leftrightarrow S_{\tilde{r}}$.

Given a choice of Frobenius $\Frob$ and a topological generator of the tame inertia group $s$ we can explicitly write an isomorphism of stacks 
\begin{align*}
    S_{\mathds{1}} &\cong [X_{\textrm{St}}/\GL_m] \\
    \rho & \mapsto \Big(\rho(\Frob), \log\big(\rho(s)\big)\Big)\\
    \rho_{\Phi}(\Frob^n x)=\Phi^n\exp(Nt_l(x)) & \leftmapsto (\Phi,N)
\end{align*}
As $(X_{\textrm{St}})_L$ is a smooth scheme by Theorem \ref{theorem-smooth}, it shows that $S_{\mathds{1}}[1/l]$ is a smooth stack, and thus that $S_{\tilde{r}}[1/l]$  and $(X_{\tilde{r},n})_L$ are smooth. 
\end{proof}

In light of this proposition, if $\bar{\rho}:G_{F,\tilde{v}}\rightarrow \GL_n(\F)$ is $\tilde{r}$-discrete series, we let $R^{\square,\tilde{r}}_v$ be the universal lifting ring of $\tilde{r}$-discrete series representations. By the proposition, the ring $R^{\square,\tilde{r}}_v[1/l]$ is regular at every maximal ideal.

\subsubsection{Deformation rings at primes above $l$}

For $v\in S_l$, let $\bar{I}_{\tilde{v}}$ be the inertia subgroup of $G_{F,\tilde{v}}^{\textrm{ab}}$, let $\bar{I}_{\tilde{v}}(l)$ be the pro-$l$ part, and let $\Lambda_{\tilde{v}}:=\Ocal[[\bar{I}_{\tilde{v}}(l)^n]]$, which we can identify with the universal lifting algebra of an ordered set of inertial characters $\{\bar{\chi}_i:I_{\tilde{v}}\rightarrow \F^{\times}\}_{i=1,...,n}$. Following chapter 3 of \cite{geraghty_2018} we can define a lifting $\Lambda_{\tilde{v}}$-algebra $R^{\triangle}_{v}$ as follows. 
Take the universal lifting ring $R^{\square,\Lambda}_{v}$, such that a morphism $r:R^{\square,\Lambda}_{v}\rightarrow A$ corresponds to a pair $(\rho,\{\chi_i\}_{i=1,...,n})$ consisting of a representation $\rho:G_v\rightarrow \GL_n(A)$ lifting $\bar{\rho}$ and a sequence of characters $\chi_i:I_{\tilde{v}}\rightarrow A^{\times}$. 
Let $\cat{Flag}$ be the flag variety defined over $\Ocal$. There is a subscheme $\Gc$ of $\cat{Flag}\times_{\Ocal}\Spec R^{\square,\Lambda}_{v}$ whose $A$-points are the triples $(\Fil,\rho, \{\chi_i\})\in (\cat{Flag}\times_{\Ocal}\Spec R^{\square})(A)$ such that $\rho:G_v\rightarrow \GL_n(A)$ preserves the filtration $\Fil$ on $A^n$, and such that the action of $I_{\tilde{v}}$ on the graded part $\Fil_j/\Fil_{j-1}$ is $\chi_j$. Then $R^{\triangle}_v$ is defined as the image of the natural morphism $R^{\square,\Lambda}_{v}\rightarrow \Gamma(\Gc,\Oc_{\Gc})$.

It is a fact (see Lemma 3.3 of \cite{geraghty_2018}) that the morphism $R^{\square}_v\rightarrow \Ocal$ corresponding to a representation $\rho:G_v\rightarrow \GL_n(\Ocal)$
factors through $R^{\triangle}_v$
if and only if
$\rho$ is $\GL_n(\Ocal)$-conjugate to an upper triangular representation with diagonal characters equal to $\chi_1,...,\chi_n$ when restricted to inertia. 

\begin{definition}
    If $A$ is a $\Z_l$-algebra and $v\in S_l$, we call a representation $\rho:G_v\rightarrow \GL_n(A)$ \emph{ordinary} if it is $\GL_n(A)$-conjugate to an upper triangular matrix. Likewise, if $\rho:\Gal(\bar{F}:F)\rightarrow \GL_n(A)$ is a global Galois representation, we say $\rho$ is ordinary if $\rho|_{G_v}$ is ordinary at all places $v\in S_l$. 
\end{definition}

In this terminology, the fact above can be restated as `a point $x:R^{\square}_v\rightarrow \Ocal$ factors though $R^{\triangle}_v$ if and only if the corresponding representation $\rho_x$ is ordinary'.

\begin{lemma}\label{lemma Sl}

    Suppose that $\bar{\rho}_v:G_{F,\tilde{v}}\rightarrow \GL_n(\F)$ is an ordinary Galois representation with diagonal characters $\bar{\chi}_1,\bar{\chi}_2,...,\bar{\chi}_n$, such that no pair $i<j$ has $\chi_i=\varepsilon \chi_j$ where $\varepsilon$ is the cyclotomic character. Then $R^{\triangle}_v[1/l]$ is formally smooth of dimension $[F_{\tilde{v}}:\Q_l]\frac{n(n+1)}{2}+n^2$ over $L$.
\end{lemma}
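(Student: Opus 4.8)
The plan is to verify that $R^{\triangle}_v[1/l]$ is regular and equidimensional of dimension $d:=[F_{\tilde v}:\Q_l]\tfrac{n(n+1)}{2}+n^2$ by computing, at each maximal ideal, the tangent space and showing the relevant obstruction space vanishes; since $R^{\triangle}_v[1/l]$ is a Noetherian algebra over the characteristic-$0$ field $L=\Ocal[1/l]$, this is precisely formal smoothness over $\Ocal$ of dimension $d$. It suffices to treat a maximal ideal $x$, with residue field $E$ finite over $L$, and to show $(R^{\triangle}_v[1/l])^{\wedge}_x$ is a power series ring over $E$ in $d$ variables, the number $d$ turning out to be independent of $x$.

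Such an $x$ corresponds to a pair $(\rho_x,\{\chi_i\})$ in which $\rho_x:G_{F,\tilde v}\to\GL_n(E)$ is ordinary, i.e.\ preserves a full flag with graded characters $\psi_1,\dots,\psi_n$ satisfying $\psi_i|_{I_{\tilde v}}=\chi_i$. Write $\mathfrak b_{\rho_x}\subseteq\gl_n$ for the Borel subalgebra stabilising this flag, endowed with the adjoint action of $G_{F,\tilde v}$ through $\rho_x$; it carries a filtration whose graded pieces are the characters $\psi_i\psi_j^{-1}$ with $i\le j$. Using the description of $R^{\triangle}_v$ as the image of $R^{\square}_v\to\Gamma(\Gc,\Oc_{\Gc})$ together with Lemma 3.3 of \cite{geraghty_2018}, I would first identify $(R^{\triangle}_v[1/l])^{\wedge}_x$ with the hull of the framed deformation functor of $\rho_x$ through ordinary representations with ordered inertial characters $(\chi_i)$. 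Decomposing a deformation as a deformation of the flag (recorded by a class in $\gl_n/\mathfrak b_{\rho_x}$, the tangent space to the flag variety) followed by a framed deformation preserving it (recorded by a cocycle in $Z^1(G_{F,\tilde v},\mathfrak b_{\rho_x})$) then identifies the tangent space with $Z^1(G_{F,\tilde v},\mathfrak b_{\rho_x})\oplus(\gl_n/\mathfrak b_{\rho_x})$ and gives an obstruction theory valued in $H^2(G_{F,\tilde v},\mathfrak b_{\rho_x})$.

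The hypothesis on the characters is exactly what forces the obstruction space to vanish. By d\'evissage along the filtration of $\mathfrak b_{\rho_x}$ and local Tate duality, $H^2(G_{F,\tilde v},\mathfrak b_{\rho_x})$ is a subquotient of $\bigoplus_{i\le j}H^2(G_{F,\tilde v},\psi_i\psi_j^{-1})\cong\bigoplus_{i\le j}H^0(G_{F,\tilde v},\psi_j\psi_i^{-1}\varepsilon)^{\vee}$. The summand with $i=j$ vanishes since $\varepsilon\ne\mathbf 1$; a summand with $i<j$ can be nonzero only if $\psi_i=\varepsilon\psi_j$, which upon reduction mod $l$ forces $\bar\chi_i=\bar\varepsilon\,\bar\chi_j$, contrary to assumption. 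Hence $H^2(G_{F,\tilde v},\mathfrak b_{\rho_x})=0$, the framed functor is unobstructed, and $(R^{\triangle}_v[1/l])^{\wedge}_x$ is a power series ring over $E$ on its tangent space. Its dimension is $\dim_E Z^1(G_{F,\tilde v},\mathfrak b_{\rho_x})+\dim_E(\gl_n/\mathfrak b_{\rho_x})$; as $F_{\tilde v}$ is a finite extension of $\Q_l$, the local Euler characteristic formula gives $h^1(\mathfrak b_{\rho_x})=h^0(\mathfrak b_{\rho_x})+h^2(\mathfrak b_{\rho_x})+[F_{\tilde v}:\Q_l]\dim\mathfrak b_{\rho_x}$, so $\dim_E Z^1(G_{F,\tilde v},\mathfrak b_{\rho_x})=(1+[F_{\tilde v}:\Q_l])\dim\mathfrak b_{\rho_x}$ once $h^2=0$ is used; with $\dim\mathfrak b_{\rho_x}=\tfrac{n(n+1)}{2}$ and $\dim(\gl_n/\mathfrak b_{\rho_x})=\tfrac{n(n-1)}{2}$ this sums to $n^2+[F_{\tilde v}:\Q_l]\tfrac{n(n+1)}{2}=d$, independent of $x$.

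I expect the genuinely delicate step to be the first one: extracting from the image construction the precise local structure of $R^{\triangle}_v[1/l]$ at $x$, in particular at points where $\rho_x$ admits more than one ordinary flag of the prescribed inertial type, so that the naive ordinary deformation functor need not be pro-representable on the nose. One option is to read the required pro-representability off Geraghty's analysis in \cite{geraghty_2018}\,\S3; another is to carry out the same tangent--obstruction computation on the flag-remembering space $\Gc$ (whose generic fibre the vanishing above shows smooth) and then descend regularity along the proper surjection $\Gc_L\to\Spec R^{\triangle}_v[1/l]$ using Lemma \ref{lemmasmooth}, which requires understanding the fibres of that morphism. The cohomological input and the dimension count, by contrast, are routine.
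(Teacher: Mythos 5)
Your cohomological core is correct and is, in substance, the engine of the result the paper invokes: the filtration of $\mathfrak{b}_{\rho_x}$ with graded characters $\psi_i\psi_j^{-1}$ for $i\le j$, Tate local duality together with the hypothesis that no $\bar\chi_i$ equals $\bar\varepsilon\bar\chi_j$ for $i<j$ (and $\varepsilon\neq 1$ in characteristic zero for $i=j$) to force $H^2(G_{F,\tilde v},\mathfrak{b}_{\rho_x})=0$, and the local Euler characteristic count giving $n^2+[F_{\tilde v}:\Q_l]\tfrac{n(n+1)}{2}$. What this actually computes, however, is the smoothness and dimension of $\Gc[1/l]$ at a point $\tilde x$ lying over $x$. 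The step you yourself flag as ``genuinely delicate'' is a real gap, not a routine verification: $R^{\triangle}_v$ is by construction the scheme-theoretic image of the proper morphism $\Gc\rightarrow\Spec R^{\square}_v$, so an Artinian-valued point of $\Spec R^{\triangle}_v[1/l]$ need not lift to $\Gc$, and $(R^{\triangle}_v[1/l])^{\wedge}_x$ is neither obviously a hull of the naive ordinary framed deformation functor nor obviously isomorphic to $\Oc^{\wedge}_{\Gc[1/l],\tilde x}$. Establishing that comparison — controlling the fibre of $\Gc$ over $x$ and the induced map on complete local rings — is where the real work lies, and your sketch does not do it.

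Neither of your proposed repairs closes the gap as written. Descending regularity along the proper surjection $\Gc_L\rightarrow\Spec R^{\triangle}_v[1/l]$ via Lemma \ref{lemmasmooth} is not available: that lemma concerns smooth morphisms, and regularity does not descend along proper surjections, nor even proper bijections (the normalisation of a cuspidal cubic is smooth and maps properly and bijectively onto a singular curve). Your other fallback is, in effect, the paper's proof: the paper simply cites Lemmas 3.7 and 3.17 of \cite{geraghty_2018}, where precisely this comparison between the flag-remembering space ($\Gc^{ar}$ there) and $R^{\triangle,ar}_v$ is carried out, adding only the observation that in the present situation $\Gc^{ar}$ is a union of irreducible components of $\Gc$ containing the relevant open locus, so that $R^{\triangle,ar}_v=R^{\triangle}_v$ and Geraghty's statement applies verbatim. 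So either quote those lemmas, as the paper does, or supply the missing comparison of $\Gc$ with its image at the level of completed local rings; the $H^2$-vanishing and the dimension count alone prove smoothness of $\Gc[1/l]$, not of $\Spec R^{\triangle}_v[1/l]$.
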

\begin{proof} 
This follows from Lemmas 3.17 and 3.7 of \cite{geraghty_2018}. (To apply Lemma 3.17 as stated in \cite{geraghty_2018}, one must note that $\Gc^{ar}$ is a union of irreducible components of $\Gc$ and $\bar{\rho}_v$ lies in the open subset of $\Gc[1/l]$ whose closure is defined to be $\Gc^{ar}$. Thus $R^{\triangle,ar}_v=R^{\triangle}_v$.) 
\end{proof}

\subsection{Local-Global compatibility}

We start by introducing the group $\mathcal{G}_n$ from \cite{CHT08}, defined as the group scheme that is the semi-direct product of $\GL_n\times \GL_1$ with $C_2=\{1,j\}$ where $j$ acts as 
$$j(g,\mu)j^{-1}=\big(\mu (g^{-1})^\mathrm{T},\mu\big).$$
By Lemma 2.1.1 of \cite{CHT08}, representations $r:G_{F^+}\rightarrow \mathcal{G}_n(R)$ such that $r^{-1}(\GL_n(R)\times \GL_1(R))=G_F$ correspond with pairs $(\rho,\chi)$ where $\rho$ is an $n$-dimensional representation of $G_F$ and $\chi$ is a character of $G_{F^+}$ such that $\rho^c\cong \chi\rho^{\vee}$, and $c\in G_{F^+}$ is sent to $j$.

For brevity, whenever we have a homomorphism $r:G_{F^+}\rightarrow \mathcal{G}_n(R)$ and a subgroup $H\subset G_{F^+}$, we use $r|_{H}$ to mean restriction to $H$, followed by projection to $\GL_n$. 
Typically, $H$ will be the subgroup $G_F$ or its localisations $G_{F,w}$.

\begin{prop}\label{Prop-reps}
Suppose that $\mathfrak{m}\trianglelefteq \T^{T,\ord}(U(l^{\infty}),\Ocal)$ is a maximal ideal with residue field $\F$. Then there is a unique continuous semisimple representation 
$$\bar{r}_{\mathfrak{m}}:G_F\rightarrow \GL_n(\F)$$ 
such that:
\begin{enumerate}
    \item $$\bar{r}_{\mathfrak{m}}^c\cong \bar{r}_{\mathfrak{m}}^{\vee}\otimes \varepsilon^{1-n};$$
    \item $\bar{r}_{\mathfrak{m}}|_w$ is unramified at all places $v$ of $F^+$ outside $T$ ;
    \item If $v$ additionally splits as $v=ww^c$ in $F$, then the characteristic polynomial of $\bar{r}_{\mathfrak{m}}(\Frob_w)$ is 
    $$X^n-T^{(1)}_w X^{n-1}+...+(-1)^j N(w)^{\frac{j(j-1)}{2}}T^{(j)}_w X^{n-j}+...+(-1)^n N(w)^{\frac{n(n-1)}{2}}T^{(n)}_w$$
    modulo $\mathfrak{m}$; 
    \item Let $\tilde{r}_{\tilde{v}}:G_F\rightarrow \GL_{m_v}(\Ocal)$ be as in Section 3.3 (p97) of \cite{CHT08} (note: this is constructed from the smooth representation $\rho_v:G_D(F^+_v)\rightarrow \GL(M_v)$ via the Jacquet-Langlands and local Langlands correspondences). If $v\in S_D$ and $U_v=G_D(\Ocal_{F^+,v})$, then $\bar{r}_{\mathfrak{m}}|_{G_{F,v}}$ is $\tilde{r}_{\tilde{v}}$-discrete series.
\end{enumerate}

\end{prop}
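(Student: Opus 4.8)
The plan is to construct $\bar r_{\mathfrak m}$ as the semisimplified mod-$l$ reduction of a $p$-adic Galois representation attached to a classical automorphic representation whose Hecke eigensystem is the one cut out by $\mathfrak m$, and then to deduce the four listed properties from local--global compatibility together with the Chebotarev density theorem.

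First I would pass to a classical eigensystem. Since $\mathbb{T}^{T,\ord}(U(l^{\infty}),\Ocal)=\varprojlim_b \mathbb{T}^{T,\ord}(U(l^{b}),\Ocal)$ with $\F$ finite, $\mathfrak m$ is the contraction of a maximal ideal $\mathfrak m_b$ of $\mathbb{T}^{T,\ord}(U(l^{b}),\Ocal)$ for some $b$; moreover each $\mathbb{T}^{T,\ord}(U(l^{b}),\Ocal)$ acts faithfully on the $\Ocal$-free module $S^{\ord}_\lambda(U(l^{b}),\Ocal)$, so the whole tower is $\Ocal$-torsion-free, hence $\Ocal$-flat. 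Hida's control theory (Section 2 of \cite{geraghty_2018}) together with a Deligne--Serre style lifting lemma then produces a regular dominant weight $\lambda$, a finite $l$-level, and an $\Ocal$-algebra homomorphism $x\colon\mathbb{T}^{T,\ord}(U(l^{b}),\Ocal)\to\overline{\Z}_l$ whose reduction modulo the maximal ideal of $\overline{\Z}_l$ recovers the tautological surjection onto $\F\hookrightarrow\overline{\F}_l$; by base change from $G_D$ to $\GL_n/F$ (Section 3.3 of \cite{CHT08}), the point $x$ corresponds to a regular algebraic conjugate self-dual cuspidal automorphic representation $\pi$ of $\GL_n(\mathbb{A}_F)$ whose component at each place of $F$ above $S_D$ is (an unramified twist of) a discrete series.

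Next I would invoke the Galois representations of \cite{HT01} (equivalently, Section 3.3 of \cite{CHT08}): because $D$ is a division algebra at every place of $F$ lying above $S_D$, there is a continuous $r_\pi\colon G_F\to\GL_n(\overline{\Z}_l)$, taken on a stable lattice, that is unramified outside $T$, satisfies $r_\pi^c\cong r_\pi^\vee\otimes\varepsilon^{1-n}$, has the characteristic polynomial of $r_\pi(\Frob_w)$ at split $v=ww^c\notin T$ equal to the polynomial of (3) evaluated through $x$ on the unramified Hecke operators, and has $r_\pi|_{G_{F,\tilde v}}$ of $\tilde r_{\tilde v}$-discrete series type for $v\in S_D$ (the last point being local--global compatibility at the ramified places together with the definition of $\tilde r_{\tilde v}$ from $\rho_v$). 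I would then set $\bar r_{\mathfrak m}:=(r_\pi\bmod l)^{\mathrm{ss}}$. The coefficients of its Frobenius characteristic polynomials at split $v\notin T$ are the images of the $T_w^{(i)}$ in $\F$, hence lie in $\F$; since a semisimple representation whose character is valued in the finite field $\F$ is realizable over $\F$, we may take $\bar r_{\mathfrak m}$ to be $\GL_n(\F)$-valued. Uniqueness, and independence of all the choices above, follow from Brauer--Nesbitt and Chebotarev density, the Frobenii at split $v\notin T$ being dense in $G_F$ with characteristic polynomials pinned down by $\mathfrak m$. Properties (2) and (3) are then inherited from $r_\pi$ by reduction; (1) likewise, using that semisimplification commutes with the action of $c$, with duality, and with Tate twisting; and (4) follows by reducing the filtration that witnesses the $\tilde r_{\tilde v}$-discrete series structure of $r_\pi|_{G_{F,\tilde v}}$ modulo $l$ and noting that, since $\tilde r_{\tilde v}\otimes\F$ is absolutely irreducible, its graded pieces are already semisimple, so the structure survives semisimplification.

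The genuinely substantive step is the first one: extracting from the mod-$l$ ordinary Hecke eigensystem $\mathfrak m$ an honest characteristic-zero classical automorphic representation that reduces to it and is locally discrete series at the places above $S_D$. This is where Hida's control and independence-of-weight theorems for $G_D$ and the Deligne--Serre lifting argument do the real work, all of which is available in \cite{geraghty_2018}; the existence and local--global compatibility of $r_\pi$ is quoted wholesale from \cite{HT01} and \cite{CHT08}, and everything else is formal.
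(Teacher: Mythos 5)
Your construction is correct and is in substance the same argument the paper relies on: the paper simply quotes Proposition 2.28 of \cite{geraghty_2018} for parts (1)--(3) and reduces part (4) to the classical situation (Proposition 3.4.2 of \cite{CHT08}) via the bijection between maximal ideals of $\T^{T,\ord}(U(l^{\infty}),\Ocal)$ and those of its quotient by $\mathfrak{m}_{\Lambda}$, and the proofs of those cited results are precisely the Hida-control/Deligne--Serre specialisation, base change to $\GL_n/F$, attachment of Galois representations with local--global compatibility, reduction mod $l$, and Chebotarev/Brauer--Nesbitt uniqueness steps that you sketch. So there is no genuine difference of route; you have merely unwound the citations that constitute the paper's proof.
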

\begin{proof}
Apart from statement 4, this is Proposition 2.28 in \cite{geraghty_2018}, so we prove only this part. By the argument of Proposition 2.28 in \cite{geraghty_2018}, the maximal ideals of $\T$ are in bijection with those of $\T/m_{\Lambda}$. Hence, this proposition follows immediately from the classical situation (that is, usual automorphic forms for $G_D$ rather than Hida families of ordinary automorphic forms). The proof of this can be found in Proposition 3.4.2 of \cite{CHT08}, which proves the proposition.
\end{proof}

\begin{prop}\label{LGC2}
If $\mathfrak{m}$ is non-Eisenstein (that is $\bar{r}_{\mathfrak{m}}$ is irreducible), then $\bar{r}_{\mathfrak{m}}$ can be extended to a representation $\bar{r}_{\mathfrak{m}}:G_{F^+}\rightarrow \mathcal{G}_n(\F)$, and this representation can be lifted to a representation 
$$r_{\mathfrak{m}}:G_{F^+}\rightarrow \mathcal{G}_n\Big(\T^{T,\ord}(U(l^{\infty}),\Ocal)_{\mathfrak{m}}\Big)$$ 
such that:

\begin{enumerate}
    \item If $\nu:\mathcal{G}_n\rightarrow \GL_1$ is the second projection, then $\nu\circ r_{\mathfrak{m}}=\varepsilon^{1-n}\delta^{\mu_{\mathfrak{m}}}_{F/F^+}$ where $\varepsilon$ is the cyclotomic character, $\delta_{F/F^+}$ is the non-trivial character of $G_{F^+}/G_F$, and $\mu_m\in\Z/2$;
    \item $\bar{r}_{\mathfrak{m}}|_{\tilde{v}}$ is unramified at all places $v\notin T$;
    \item If $v$ in addition splits as $v=ww^c$ in $F$, then the characteristic polynomial of $\bar{r}_{\mathfrak{m}}(\Frob_w)$ is 
    $$X^n-T^{(1)}_w X^{n-1}+...+(-1)^j N(w)^{\frac{j(j-1)}{2}}T^{(j)}_w X^{n-j}+...+(-1)^n N(w)^{\frac{n(n-1)}{2}}T^{(n)}_w;$$
    \item If $v\in S_D$, then $r_{\mathfrak{m}}|_{G_{F,\tilde{v}}}$ is $\tilde{r}_{\tilde{v}}$-discrete series.
   
\end{enumerate}

\end{prop}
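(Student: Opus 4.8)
The plan is to follow the Taylor-Wiles-type construction of Galois representations into $\mathcal{G}_n$ exactly as in Proposition 2.28 (and its companion) of \cite{geraghty_2018}, since the only genuinely new input needed here is statement (4), and that has effectively already been handled at the level of mod-$\mathfrak{m}$ representations by Proposition \ref{Prop-reps}(4). First I would invoke the fact that, since $\mathfrak{m}$ is non-Eisenstein, $\bar{r}_{\mathfrak{m}}$ is absolutely irreducible and hence (by Lemma 2.1.4 of \cite{CHT08}, using $\bar{r}_{\mathfrak{m}}^c \cong \bar{r}_{\mathfrak{m}}^{\vee}\otimes\varepsilon^{1-n}$ from Proposition \ref{Prop-reps}(1)) extends to a homomorphism $\bar{r}_{\mathfrak{m}}:G_{F^+}\to \mathcal{G}_n(\F)$ with multiplier $\varepsilon^{1-n}\delta_{F/F^+}^{\mu_{\mathfrak{m}}}$. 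Next I would produce the lift to $\T^{T,\ord}(U(l^{\infty}),\Ocal)_{\mathfrak{m}}$: one interpolates the classical automorphic Galois representations (which exist by \cite{CHT08}, Corollary 3.1.3 and the local-global compatibility results there) over the dense set of classical points, using that $\T^{T,\ord}(U(l^{\infty}),\Ocal)_{\mathfrak{m}}$ is reduced and $\Ocal$-flat and that pseudocharacters interpolate; absolute irreducibility of $\bar{r}_{\mathfrak{m}}$ then upgrades the interpolated pseudocharacter to a genuine representation valued in $\GL_n$ of the localized Hecke algebra, and the extension to $\mathcal{G}_n$ is carried along by the same argument as at the residual level. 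This establishes (1), (2), (3) verbatim as in \cite{geraghty_2018}, Proposition 2.28.

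The one point requiring attention is statement (4). Here the argument is: the classical points of $\Spec \T^{T,\ord}(U(l^{\infty}),\Ocal)_{\mathfrak{m}}[1/l]$ are Zariski dense (this is the standard density of classical weights in Hida families, as in Section 2.4 of \cite{geraghty_2018}), and at each such classical point $x$ the corresponding automorphic representation has $v$-component an unramified twist of the one cut out by $\rho_v$, so the associated local Galois representation $r_{\mathfrak{m}}|_{G_{F,\tilde{v}}} \otimes \bar{\Q}_l$ at $x$ is $\tilde{r}_{\tilde{v}}$-discrete series by the local-global compatibility at $v$ proved in Proposition 3.4.2 of \cite{CHT08}. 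The property of being $\tilde{r}_{\tilde{v}}$-discrete series is a closed condition — it is cut out by the universal lifting ring $R^{\square,\tilde{r}}_v$, equivalently by the moduli space $X_{\tilde{r},n}$ of Proposition \ref{prop-stacks} — so the map $\T^{T,\ord}(U(l^{\infty}),\Ocal)_{\mathfrak{m}}[1/l] \to$ (framed deformations of $\bar{r}_{\mathfrak{m}}|_{G_{F,\tilde{v}}}$) factors through $R^{\square,\tilde{r}}_v[1/l]$ on the dense classical locus, hence everywhere since $\T^{T,\ord}(U(l^{\infty}),\Ocal)_{\mathfrak{m}}[1/l]$ is reduced. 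Finally, since $\T^{T,\ord}(U(l^{\infty}),\Ocal)_{\mathfrak{m}}$ is $\Ocal$-flat and its generic fiber dominates, the integral statement follows by taking scheme-theoretic closure, noting $X_{\tilde{r},n}$ is $\Ocal$-flat. The main obstacle is bookkeeping: making sure the choice of frame and the passage from pseudocharacters to honest $\mathcal{G}_n$-valued representations is compatible with the local condition at $v\in S_D$ simultaneously for all such $v$; but this is routine given that the local conditions at distinct places are independent and each is a closed, flat condition. Thus all four statements hold and the proposition follows.
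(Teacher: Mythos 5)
Your handling of (1)--(3) matches the paper, which simply quotes Proposition 2.29 of \cite{geraghty_2018}; the issue is (4), and there your argument has a genuine gap. The statement to be proved is integral: $r_{\mathfrak m}|_{G_{F,\tilde v}}$ must admit a filtration by $\T^{T,\ord}(U(l^{\infty}),\Ocal)_{\mathfrak m}$-direct summands with the prescribed graded pieces. Your route --- classical points are dense in the generic fibre, being $\tilde r$-discrete series is a ``closed condition'', so the map factors through $R^{\square,\tilde r}_v[1/l]$ and then integrally by $\Ocal$-flatness --- does not deliver this. First, ``admits a filtration with prescribed graded pieces'' is not a closed condition in general (compare ordinarity, which is exactly why this paper works with $R^{\triangle}_v$ built from the flag variety rather than a naive quotient); what rescues the discrete-series case is the uniqueness of the filtration (Lemma 2.4.25 of \cite{CHT08}), which you never invoke. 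Second, Harris--Taylor local--global compatibility at a classical point only gives the discrete-series property of the specialised representation over the residue field $E$; to know the corresponding map $R^{\square}_v\rightarrow\Ocal_E$ factors through $R^{\square,\tilde r}_v$ you need the property for the $\Ocal_E$-lattice, and the isomorphisms $\gr^i\cong\gr^0(i)$ and $\gr^0|_{I}\cong\tilde r|_{I}\otimes\Ocal_E$ are lattice-level statements that do not follow formally from their counterparts over $E$. Third, even granting a factorisation of ring maps, producing a filtration over the (in general singular) local ring $\T_{\mathfrak m}$ requires the universal object over $R^{\square,\tilde r}_v$ to carry one, i.e.\ the representability input you are implicitly assuming; and the density of classical points in the big ordinary Hecke algebra is not a one-line ``standard'' fact but needs the control theorem together with $\Lambda$-torsion-freeness.

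The paper takes a different and shorter route that sidesteps all of this: it writes $\T_{\mathfrak m}=\varprojlim_b\T^{T,\ord}(U(l^{b}),\Ocal)_{\mathfrak m_b}$ and $r_{\mathfrak m}=\varprojlim_b r_{\mathfrak m_b}$, quotes Lemma 3.4.4 of \cite{CHT08} for the integral discrete-series property of each finite-level $r_{\mathfrak m_b}|_{G_{F,\tilde v}}$ (this is where the lattice-level work you would need is already done), and then glues the finite-level filtrations in the inverse limit, using the uniqueness of discrete-series filtrations (Lemma 2.4.25 of \cite{CHT08}) to ensure compatibility. To repair your argument you would have to supply exactly these two ingredients --- an integral discrete-series statement at finite level (or at a cofinal family of specialisations) and the uniqueness/gluing step --- at which point you have essentially reproduced the paper's proof.
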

\begin{proof}
    As with the previous proposition, this is Proposition 2.29 in \cite{geraghty_2018} along with the additional statement 4, so we prove only this final statement.
    By the proof of Proposition 2.29 of \cite{geraghty_2018}, we may find a sequence of maximal ideals $\mathfrak{m}_b\subset \mathbb{T}^{T,\ord}(U(l^{b}),\Ocal)$ such that $\T_{\mathfrak{m}}=\varprojlim_b\mathbb{T}^{T,\ord}(U(l^{b}),\Ocal)_{\mathfrak{m}_b}$, and we define $r_{\mathfrak{m}}=\varprojlim_b r_{\mathfrak{m}_b}$. By Lemma 3.4.4 of \cite{CHT08}, each $r_{\mathfrak{m}_b}|_{G_{F,\tilde{v}}}$ is $\tilde{r}_{\tilde{v}}$-discrete series, and so now it remains to show that $r_m|_{G_{F,\tilde{v}}}$ is, too.
    Since 
    $$r_{\mathfrak{m}_b}\otimes \mathbb{T}^{T,\ord}(U(l^{c,c}),\Ocal)_{\mathfrak{m}_c}=r_{\mathfrak{m}_c}$$ whenever $b>c$,, it follows that the filtration $\Fil^i_b$ on $r_{\mathfrak{m_b}}$ descends to a filtration $\Fil^i_b\otimes \mathbb{T}^{T,\ord}(U(l^{c,c}),\Ocal)_{\mathfrak{m}_c}$ on 
    $r_{\mathfrak{m_c}}$, and that the graded parts have 
    $$[\gr^i(r_{\mathfrak{m},b})]\otimes \mathbb{T}^{T,\ord}(U(l^{c,c}),\Ocal)_{\mathfrak{m}_c}\cong \gr^i[r_{\mathfrak{m},b}\otimes\mathbb{T}^{T,\ord}(U(l^{c,c}),\Ocal)_{\mathfrak{m}_c}].$$
    It follows that $\Fil^i_b\otimes \mathbb{T}^{T,\ord}(U(l^{c,c}),\Ocal)_{\mathfrak{m}_c}$ is a defining filtration on $r_{\mathfrak{m}_c}$. 
    From Lemma 2.4.25 of \cite{CHT08}, such a filtration is unique, so we have a compatible system of filtrations on the $r_{\mathfrak{m}_b}$ which lift to a filtration on $r_{\mathfrak{m}}|_{G_{F,\tilde{v}}}$. We see from this compatibility that ${\gr^i(r_{\mathfrak{m}})}=\varprojlim_b {\gr^i(r_{\mathfrak{m}_b})}$, and so $r_{\mathfrak{m}}|_{G_{F,\tilde{v}}}$ is $\tilde{r}_{\tilde{v}}$-discrete series.
    
\end{proof}

To complete the results we need for local-global compatibility, we need the following lemma:

\begin{lemma}
Let $\tilde{v}\in \tilde{S_l}$, and let $R^{\triangle}_{\tilde{v}} $ be as before. Then there is a map $R^{\triangle}_{\tilde{v}}\rightarrow \T^{T,\ord}(U(l^{\infty}),\Ocal)_{\mathfrak{m}}$ such that 
\[\begin{tikzcd}
G_{F,{\tilde{v}}} \arrow{r}{\rho^{\triangle}} \arrow{rd}{r_{\mathfrak{m}}}  & \Gc_n(R^{\triangle}_{\tilde{v}}) \arrow{d} \\
 & \Gc_n\Big(\T^{T,\ord}(U(l^{\infty}))\Big)
\end{tikzcd}\]
    commutes.
\end{lemma}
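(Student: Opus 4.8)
The plan is to invoke the characterisation of $R^{\triangle}_{\tilde v}$ recorded just above (Lemma 3.3 of \cite{geraghty_2018}). Writing $\T_{\mathfrak m}:=\T^{T,\ord}(U(l^{\infty}),\Ocal)_{\mathfrak m}$ and assuming $\mathfrak m$ non-Eisenstein as in Proposition~\ref{LGC2}, to produce the desired $\Ocal$-algebra map $R^{\triangle}_{\tilde v}\to\T_{\mathfrak m}$ it suffices to exhibit (i) a homomorphism $\rho\colon G_{F,\tilde v}\to\GL_n(\T_{\mathfrak m})$ lifting $\bar r_{\mathfrak m}|_{G_{F,\tilde v}}$, (ii) a $\Lambda_{\tilde v}$-algebra structure on $\T_{\mathfrak m}$, that is an ordered $n$-tuple of characters $\chi_i\colon I_{\tilde v}\to\T_{\mathfrak m}^{\times}$, such that (iii) $\rho$ is $\GL_n(\T_{\mathfrak m})$-conjugate to an upper-triangular representation whose diagonal characters restrict on inertia to $\chi_1,\dots,\chi_n$. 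For (i) I take $\rho=r_{\mathfrak m}|_{G_{F,\tilde v}}$, the restriction of the representation of Proposition~\ref{LGC2} post-composed with the projection $\Gc_n\to\GL_n$; granting (i)--(iii), the resulting map $R^{\square}_{\tilde v}\to\T_{\mathfrak m}$ factors through $R^{\triangle}_{\tilde v}$, and the triangle of the lemma commutes because $\rho^{\triangle}$ is by construction the pushforward to $R^{\triangle}_{\tilde v}$ of the universal lift over $R^{\square}_{\tilde v}$.

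For (ii), the diamond operators give an inclusion $\Lambda=\Ocal[[T_n(l)]]\hookrightarrow\T_{\mathfrak m}$, and $T_n(l)$ is built from $\prod_{v\in S_l}T_n(\Ocal_{F_{\tilde v}})\cong\prod_{v\in S_l}(\Ocal_{F_{\tilde v}}^{\times})^{n}$. Under the Artin reciprocity isomorphism $\Ocal_{F_{\tilde v}}^{\times}\cong\bar I_{\tilde v}$, passing to maximal pro-$l$ quotients identifies the $\tilde v$-component of this structure with the tautological $n$-tuple of characters $I_{\tilde v}\to\bar I_{\tilde v}(l)\to\Lambda_{\tilde v}^{\times}$ parametrised by $\Lambda_{\tilde v}=\Ocal[[\bar I_{\tilde v}(l)^n]]$. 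Composing with the natural map $\Lambda_{\tilde v}\to\Lambda\hookrightarrow\T_{\mathfrak m}$ yields the $\chi_i$ and the required $\Lambda_{\tilde v}$-algebra structure on $\T_{\mathfrak m}$.

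For (iii) I argue as in the proof of Proposition~\ref{LGC2}(4). Choose, as in Proposition~2.29 of \cite{geraghty_2018}, maximal ideals $\mathfrak m_b\subset\T^{T,\ord}(U(l^b),\Ocal)$ with $\T_{\mathfrak m}=\varprojlim_b\T^{T,\ord}(U(l^b),\Ocal)_{\mathfrak m_b}$ and $r_{\mathfrak m}=\varprojlim_b r_{\mathfrak m_b}$. At each finite level the relevant Hecke module is ordinary in Hida's sense (the $U_{\mu,\tilde v}^{(i)}$ act invertibly on it), so classical local--global compatibility for ordinary automorphic forms shows that $r_{\mathfrak m_b}|_{G_{F,\tilde v}}$ carries a decreasing filtration $\Fil^{\bullet}_b$ by $\T^{T,\ord}(U(l^b),\Ocal)_{\mathfrak m_b}$-direct summands with graded pieces of rank one on which $I_{\tilde v}$ acts through the reductions of the $\chi_i$. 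Uniqueness of such a filtration (Lemma 2.4.25 of \cite{CHT08}) forces these to be compatible under the base-change identifications $r_{\mathfrak m_b}\otimes\T^{T,\ord}(U(l^c),\Ocal)_{\mathfrak m_c}\cong r_{\mathfrak m_c}$ for $b>c$, hence they lift to a direct-summand filtration $\Fil^{\bullet}$ on $r_{\mathfrak m}|_{G_{F,\tilde v}}$ with $\gr^j$ of rank one and inertial action $\chi_j$. Choosing a $\T_{\mathfrak m}$-basis adapted to $\Fil^{\bullet}$ makes $r_{\mathfrak m}|_{G_{F,\tilde v}}$ literally upper-triangular with the prescribed diagonal inertial characters, and Lemma 3.3 of \cite{geraghty_2018} then supplies the factorisation.

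The main obstacle is precisely step (iii): one needs ordinarity of $r_{\mathfrak m}|_{G_{F,\tilde v}}$ \emph{integrally} over the complete local ring $\T_{\mathfrak m}$ --- not merely after specialising at classical points --- and one needs the graded inertial characters to be exactly the diamond-operator characters rather than some unspecified unramified twist. The inverse-limit argument reduces this to the finite-level classical statement together with the uniqueness of the ordinary filtration, which is the genuine input; the remaining work (reconciling the $\Gc_n$-valued and $\GL_n$-valued pictures, tracking framings, and checking that the identification in (ii) uses the class field theory normalisation compatible with the one implicit in the construction of $r_{\mathfrak m}$) is bookkeeping of the kind already carried out in \cite{geraghty_2018}.
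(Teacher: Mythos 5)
The paper's proof is a one-line citation: the factorisation of $r_{\mathfrak m}|_{G_{F,\tilde v}}$ through $R^{\triangle}_{\tilde v}$ is exactly Corollary 4.3 of \cite{geraghty_2018}. Your steps (i) and (ii) are fine and amount to the same bookkeeping (restriction of $r_{\mathfrak m}$, diamond operators giving the $\Lambda_{\tilde v}$-structure), but step (iii) contains a genuine gap, and it is precisely the point that Geraghty's machinery is designed to avoid. You claim that classical local--global compatibility for ordinary forms gives, at each finite level, a decreasing filtration of $r_{\mathfrak m_b}|_{G_{F,\tilde v}}$ by \emph{direct summands over the Hecke algebra} $\T^{T,\ord}(U(l^b),\Ocal)_{\mathfrak m_b}$ with rank-one graded pieces on which inertia acts by the diamond characters. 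Classical local--global compatibility only gives ordinarity of the Galois representation attached to each individual classical form, i.e.\ a filtration over a field at each classical point; it does not produce a simultaneous filtration over the (big or finite-level) Hecke algebra. Such an integral filtration need not exist unless the residual diagonal characters $\bar\chi_1,\dots,\bar\chi_n$ are pairwise distinct, and the paper's running hypotheses (only $\bar\chi_i\neq\varepsilon\bar\chi_j$) do not guarantee this; moreover your uniqueness input, Lemma 2.4.25 of \cite{CHT08}, concerns $\tilde r$-discrete-series filtrations (graded pieces are cyclotomic twists of a fixed absolutely irreducible representation) and does not apply to the ordinary filtration at $\tilde v\mid l$, where uniqueness also fails when characters are congruent. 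So the inverse-limit argument you borrow from Proposition~\ref{LGC2}(4) does not transfer to this setting.

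The correct route --- and the content of the cited Corollary 4.3 --- is to avoid integral upper-triangularisation altogether: $R^{\triangle}_{\tilde v}$ is defined as the image of $R^{\square}_{\tilde v}\rightarrow\Gamma(\Gc,\Oc_{\Gc})$ with $\Gc$ a closed subscheme of $\cat{Flag}\times_{\Ocal}\Spec R^{\square}_{\tilde v}$, precisely so that one only needs ordinarity at a (Zariski dense) set of classical points of $\T_{\mathfrak m}$: density of such points plus properness of the flag variety (a closure argument on $\Gc$) forces the map $R^{\square}_{\tilde v}\rightarrow\T_{\mathfrak m}$ to factor through $R^{\triangle}_{\tilde v}$, with the inertial characters matched to the diamond-operator characters. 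If you want a self-contained proof you should replace step (iii) by this density-plus-properness argument (or add a distinguishedness hypothesis on the $\bar\chi_i$, which the paper does not impose); as written, the key claim is asserted rather than proved.
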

\begin{proof}
    This follows directly from Corollary 4.3 of \cite{geraghty_2018}.
\end{proof}

\subsection{Global deformation rings}

Let $F/F^+$ and let $\bar{\rho}:G_{F}\rightarrow \GL_n(\F)$ be a representation with local representations $\rho_w=\bar{\rho}|_{G_{F,w}}$, where $w$ is a place of $F$. Let $R$ be the set of places $v$ of $F^+$ such that $v$ splits and there is a place $w$ of $F$ above $v$ where $\rho$ ramifies. Set $T=S_l\coprod S_D\coprod R$, and define $\tilde{T}$ as before.
\\\\
Assume that: 
\begin{itemize}
    \item the representation $\bar{\rho}$ is an irreducible automorphic representation. That is, there is a non-Eisenstein maximal ideal $\mathfrak{m}\trianglelefteq \T^{T,\ord}(U(l^{\infty}),\Ocal))$ so that $\bar{\rho}\cong \bar{r}_{\mathfrak{m}}$; 
    \item the subgroup $\rho(G_{F^+(\zeta_l)})\subseteq \mathcal{G}_n(\F)$ is adequate in the sense of Definition 2.3 of \cite{thorne};
    \item the representation $\bar{\rho}$ is unramified outside $\Tilde{T}$;
    \item at any place $v\in R$, any lift of $\bar{\rho_v}$ to $\overline{\Q_l}$ is non-degenerate in the sense of Section 3.3 of \cite{Jackshottonduke}, in particular they lie on a single irreducible component of $\Loc^{\square}_{\GL_n,\Q_l}$;
    \item For each $v\in S_l$, have $\Hom_{G_{F,\tilde{v}}}(\bar{\rho}_{\tilde{v}},\bar{\rho}_{\tilde{v}}\varepsilon)=0$ for $\varepsilon$ the cyclotomic character. 
\end{itemize}
As $\bar{\rho}\cong \bar{r}_{\mathfrak{m}}$ is irreducible, it can be extended to a representation $\bar{\rho}:G_{F^+}\rightarrow \mathcal{G}_n(\F)$ such that $\nu\circ\bar{\rho}=\varepsilon^{1-n}\delta_{F/F^+}^{\mu_\mathfrak{m}}$ via Proposition \ref{LGC2}. We fix such an extension.

For each $v\in T$, define $R^{\square}_v$ as the framed deformation ring for $\bar{\rho}_{\tilde{v}}$.
Set
$$R^{\loc}:=\left(\bighotimes_{\Ocal, v\in S_l}R^{\triangle}_{v}\right)\hotimes_{\Ocal} \left(\bighotimes_{\Ocal,v\in S_D}R^{\square,{\tilde{r}_{\tilde{v}}}}_{v}\right)\hotimes_{\Ocal} \left(\bighotimes_{\Ocal,v\in R}R^{\square}_{v}\right)$$
as the local deformation ring for $\bar{\rho}$. 
Our first observation is that, since each $R^{\triangle}_{v}$ is a $\Lambda_{\tilde{v}}$-module, the algebra $R^{\loc}$ inherits the structure of a $\bighotimes_{v\in S_l}\Lambda_{\tilde{v}} \cong \Lambda$-module. 
The isomorphism $\bighotimes_{v\in S_l}\Lambda_{\tilde{v}} \cong \Lambda$ is inherited from the group isomorphisms 
$$T_n(\mathfrak{l})\cong \prod_{v\in S_l}T_n\Ocal_{F^+,v}(l)\cong \prod_{v\in S_l}T_n\Ocal_{F,\tilde{v}}(l) \cong \prod_{v\in S_l} \bar{I}_{\tilde{v}}(l)^n$$
where the final isomorphism is given by the Artin map of local class field theory.

\begin{lemma}{\label{lem: Rloc}}
    The ring $R^{\loc}[1/l]$ is regular.  
\end{lemma}
\begin{proof}
Recall the construction of the complete ring $R^{\triangle}_v$ as the image of $R^{\square,\Lambda}_{v}=R^{\square}_v\hotimes_{\Oc}\Lambda_v$ in the global sections of $\Gc\subseteq \cat{Flag}\times_{\Oc}R^{\square,\Lambda}_{v}$. We `de-complete' $R^{\triangle}_v$ as follows:
Observe that $R^{\square}_v$ is the completion of a local ring $\tilde{R}^{\square}_v$ at a closed point on a finite-type scheme over $\Oc$. The ring $\Lambda_v=\Oc[[\bar{I}_{\tilde{v}}(l)^n]]$ is a completed group algebra of a group which is topologically finitely generated, generated by a fixed choice of generators $\{s_i\}$. 
So we can choose a subring $\tilde{\Lambda}_v=\Ocal[s_i^{\pm 1}]/\langle\textrm{relations}\rangle\subseteq \Lambda_v$ of finite type over $\Oc$ which is dense in $\Lambda_v$. Thus, there is a ring $\tilde{R}^{\square,\Lambda}_{v}$ of finite type over $\Oc$ whose completion is $R^{\square,\Lambda}_{v}$.

We can define a closed subscheme $\tilde{\Gc}\subseteq \cat{Flag}\times_{\Oc}\Spec(\tilde{R}^{\square,\Lambda}_{v})$ cut out by the same equations for $\Gc$ as in the definition of $R^{\triangle}_v$. Of course, there is a natural commutative diagram
\[\begin{tikzcd}
	{\tilde{R}^{\square,\Lambda}_{v}} & {\Oc(\tilde{\Gc})} \\
	{R^{\square,\Lambda}_{v}} & {\Oc(\Gc)}
	\arrow["{\tilde{\phi}}", from=1-1, to=1-2]
	\arrow[hook, from=1-1, to=2-1]
	\arrow[from=1-2, to=2-2]
	\arrow["\phi"', from=2-1, to=2-2]
\end{tikzcd}\]
We set $\tilde{R}^{\triangle}_v$ as the image of $\tilde{\phi}$. It is a finite-type ring over $\Oc$ and, since the equations defining $\Gc\subseteq \cat{Flag}\times\Spec(R^{\square,\Lambda}_v)$ are rational (that is, the defining ideal $\Ic=\tilde{\Ic}\Oc_{\cat{Flag}\times\Spec(R^{\square,\Lambda}_v)}$ for some ideal $\tilde{\Ic}\subseteq \cat{Flag}\times\Spec(\tilde{R}^{\square,\Lambda}_v)$), the image $\im(\phi)=R^{\triangle}_v$ is a completion of $\tilde{R}^{\triangle}_v$.

It follows that each of $R^{\square}_v$, $R^{\square,\tilde{r}_{\tilde{v}}}_v$ and $R^{\triangle}_v$ (for $v\in R, S_D,S_l$ respectively) are completions of local rings at a closed point $P_v$ on a finite-type $\Oc$-scheme $X_v$. By the last two hypotheses on $\bar{\rho}$ listed above, Lemma \ref{lemma Sl} and Proposition 3.6 of \cite{Jackshottonduke}, we see that $R^{\triangle}_v[1/l]$ (for $v\in S_l$) is regular and $R^{\square}_v[1/l]$ (for $v\in R$) is formally smooth. Thus, the closed points $P_v$ on $X_v$ (where $v\in S_l\cup R$) lie on an open subscheme $U_v\subseteq X_v$ whose generic fibre $U_v[1/l]$ is smooth over $L$. By Proposition \ref{prop-stacks}, the same is true for $R^{\square,\tilde{r}_{\tilde{v}}}_v$ for $v\in S_D$. 
Set
\[\tilde{R}^{\loc}:=\left(\bigotimes_{\Ocal, v\in S_l}\tilde{R}^{\triangle}_{v}\right)\otimes_{\Ocal} \left(\bigotimes_{\Ocal,v\in S_D}\tilde{R}^{\square,{\tilde{r}_{\tilde{v}}}}_{v}\right)\otimes_{\Ocal} \left(\bigotimes_{\Ocal,v\in R}\tilde{R}^{\square}_{v}\right)\]
Then $\tilde{R}^{\loc}$ is of finite type over $\Oc$ and has a maximal ideal $m$, corresponding to the closed point $(P_v)_{v}$, with respect to which $R^{\loc}$ is the $m$-adic completion.
In addition, $\tilde{R}^{\loc}[1/l]$ is a regular $L$-algebra. To show that $R^{\loc}[1/l]$ is a regular ring is now a simple application of Lemma \ref{lemma-completion} and \cite[\href{https://stacks.math.columbia.edu/tag/07NY}{Lemma 07NY}]{stacks-project}.
\end{proof}

In fact, the same argument shows that $R_{\infty}[1/l]$ is a regular ring whenever $R_{\infty}$ is a power series ring in a finite number of variables with coefficients in $R^{\loc}$.

Let $\mathcal{S}$ be the following tuple
\[\mathcal{S}=\big(F/F^+, T, \tilde{T}, \varepsilon^{1-n}\delta^{\mu_m}_{F/F^+} ,\{R^{\triangle,ar}_{v}:v\in S_l\},\{R^{\square,st}_v:v\in S_D\},\{R^{\square}_v:v\in R\}\big)\] 
and say that $\rho:G_{F^+}\rightarrow \mathcal{G}(A)$ is a lifting of $\bar{\rho}$ to $A\in \mathcal{C}_{\Lambda}$ of type $\mathcal{S}$ if:

\begin{enumerate}
    \item $\rho|_{G_F}$ lifts $\bar{r}_m$;
    \item $\rho$ is unramified outside $T$;
    \item For $v\in S_D$, the local representation $\rho_v$ is $\tilde{r}$-discrete series and gives rise to the morphism $R^{\square}_v\rightarrow A$ which factors through $R^{\square,\tilde{r}}_v$;
    \item For $v\in S_l$, the restriction $\rho_v$ and the $\Lambda$-structure on $A$ give a morphism $R^{\square}_{v}\otimes \Lambda\rightarrow A$ which factors through $R^{\triangle}_{v}$;
    \item $\nu\circ \rho=\varepsilon^{1-n}\delta^{\mu_m}_{F/F^+}$.
\end{enumerate}
By Proposition 2.2.9 of \cite{CHT08}, we can construct the universal deformation ring $R_{\mathcal{S}}^{\univ}$ and the universal lifting ring $R_{\mathcal{S}}^{\square}$. 

Let $h_0=[F^+:\Q]\frac{n(n-1)}{2}+[F^+:\Q]\frac{n(1-(-1)^{\mu_{\mathfrak{m}-1}})}{2}$, and let $h$ be an integer larger than both $h_0$ and $\dim[H^1_{\mathcal{L}^{\bot}}(G_{F^+,T},\ad\bar{\rho}(1))]$. (Here, the space $H^1_{\mathcal{L}^{\bot}}(G_{F^+,T},\ad\bar{\rho}(1))$ is a particular subspace of the cohomology group $H^1(G_{F^+,T},\ad\bar{\rho}(1))$ of the Galois group $G_{F^+,T}$ of the maximal extension of $F^+$ unramified outside of $T$, defined in Proposition 4.4 of \cite{thorne}.)

After Thorne \cite{thorne}, we will call a triple $(Q,\tilde{Q},\{\bar{\psi}_v\}_{v\in Q})$ a Taylor-Wiles triple if:

\begin{enumerate}
    \item $Q$ is a set of primes of $F^+$ which split in $F$;
    \item $l|\textrm{Nm}_{F^+}(v)-1$ for each $v\in Q$;
    \item $|Q|=h$;
    \item $\tilde{Q}$ is the set $\{\tilde{v}|v\in Q\}$;
    \item For each $v\in Q$, the representation $\bar{\rho}|_{G_v}$ splits as a direct sum into $\bar{s}_v\oplus \bar{\psi}_v$ where $\bar{\psi}_v$ is a generalised eigenspace with eigenvalue $\bar{\alpha}_{v}\in\mathbb{F}$ of dimension $d_v$.
\end{enumerate}

For any Taylor-Wiles set $Q$ we can define a deformation problem $\mathcal{S}(Q)$, which is the same as $\mathcal{S}$, but in addition, we now allow $\rho_{\tilde{v}}$ for $v\in Q$ to ramify in the following way:
$\rho_{\tilde{v}}$ splits as a direct sum $s\oplus \psi$, which lift $\bar{s}$ and $\bar{\psi}$ respectively, such that $s$ is unramified, and $\psi|_{I_v}:I_v\rightarrow \GL_{d_v}$ factors through the scalar action on the underlying representation space. 
Using Proposition 2.2.9 in \cite{CHT08} again, we can now take the universal deformation ring $R^{\univ}_{\mathcal{S}(Q)}$. 
Because stipulating that the local deformations at Taylor-Wiles primes are unramified is a closed condition, this presents us with a surjection $R^{\univ}_{\mathcal{S}(Q)}\twoheadrightarrow R_{\mathcal{S}}^{\univ}$.
Further, we also have a natural map $R^{\loc}\rightarrow R^{\univ}_{\mathcal{S}(Q)}$ given by restrictions to the local subgroups at the level of functors.

\begin{prop}
    For each $N\in \mathbb{N}$, we can find a Taylor-Wiles triple $(Q_N,\tilde{Q}_N,\{\bar{\psi}_v\}_{v\in Q})$ such that $l^N||\Nm(v)-1 $ for all $v\in Q_N$ and the global deformation ring $R^{\univ}_{\mathcal{S}(Q)}$ can be topologically generated over $R^\loc$ by $h-h_0$ generators.
\end{prop}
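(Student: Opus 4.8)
The plan is to carry out the Taylor--Wiles--Thorne construction of auxiliary primes, following Section~3 of \cite{geraghty_2018} and \cite{thorne}. First I would recall the standard presentation of a global deformation ring relative to its local one: by Corollary~2.2.12 of \cite{CHT08}, reformulated as in Proposition~4.4 of \cite{thorne}, for any deformation problem $\mathcal{S}(Q)$ as above the ring $R^{\univ}_{\mathcal{S}(Q)}$ can be written as a quotient of a power series ring over $R^{\loc}$ in $g$ variables, where $g$ is controlled by $\dim_{\F} H^1_{\mathcal{L}_{\mathcal{S}(Q)}^{\bot}}(G_{F^+,T\cup Q},\ad\bar{\rho}(1))$ together with fixed archimedean and global $H^0$ correction terms. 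The constant $h_0$ is defined exactly so that, when $|Q|=h$ and the dual Selmer group $H^1_{\mathcal{L}_{\mathcal{S}(Q)}^{\bot}}(G_{F^+,T\cup Q},\ad\bar{\rho}(1))$ vanishes, this bound reads $g\le h-h_0$ (here one uses that $\bar{\rho}$ is irreducible, so $H^0(G_{F^+},\ad\bar{\rho}(1))$ takes its expected value). Thus it suffices to produce, for each $N$, a Taylor--Wiles triple $(Q_N,\tilde{Q}_N,\{\bar{\psi}_v\}_{v\in Q_N})$ with $|Q_N|=h$, with $l^N\mid\textrm{Nm}_{F^+}(v)-1$ for every $v\in Q_N$, and with $H^1_{\mathcal{L}_{\mathcal{S}(Q_N)}^{\bot}}(G_{F^+,T\cup Q_N},\ad\bar{\rho}(1))=0$.

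The heart of the argument is an inductive ``killing'' step. Suppose we have built a Taylor--Wiles set $Q$ (possibly empty) satisfying the $l^N$-divisibility condition, and that the dual Selmer group $H^1_{\mathcal{L}_{\mathcal{S}(Q)}^{\bot}}(G_{F^+,T\cup Q},\ad\bar{\rho}(1))$ is nonzero; pick a nonzero class $\phi$ in it. The local deformation condition imposed at a Taylor--Wiles prime $v$ (the ``$\bar{s}_v\oplus\bar{\psi}_v$, with $\psi$ scalar on inertia'' condition described before the proposition) is designed so that, whenever $\bar{\rho}|_{G_v}$ has the requisite eigenvalue structure and $\mathrm{res}_v\phi$ is nonzero in the unramified cohomology $H^1_{\mathrm{nr}}(G_{F,\tilde{v}},\ad\bar{\rho}(1))$, which for such $v$ is identified with the $\Frob_v$-coinvariants of $\ad\bar{\rho}(1)$, the class $\phi$ no longer lies in $H^1_{\mathcal{L}_{\mathcal{S}(Q\cup\{v\})}^{\bot}}$, so that adding $v$ strictly decreases the dimension of the dual Selmer group. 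To find such a $v$, I would apply the Chebotarev density theorem to the finite Galois extension $K/F^+$ cut out by $\bar{\rho}$, by $\zeta_{l^N}$, and by $\phi$ (i.e.\ by $\ker\bar{\rho}$, by the mod-$l^N$ cyclotomic character, and by the $G_{F^+}$-equivariant homomorphism $G_{F^+(\bar{\rho},\zeta_{l^N})}\to\ad\bar{\rho}(1)$ attached to $\phi$), choosing a Frobenius class $\Frob_v$ equal to a suitable element $\sigma$: one wants $\sigma$ trivial on $F^+(\zeta_{l^N})$ (this forces $l^N\mid\textrm{Nm}_{F^+}(v)-1$), acting on $\bar{\rho}$ with an eigenvalue $\bar{\alpha}_v$ whose generalized eigenspace realizes an allowable Taylor--Wiles decomposition, and detecting $\phi$ in the sense above. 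The existence of such $\sigma$ is precisely what the \emph{adequacy} hypothesis on $\bar{\rho}(G_{F^+(\zeta_l)})$ delivers: adequacy gives $H^1(\bar{\rho}(G_{F^+(\zeta_l)}),\ad\bar{\rho})=0$ (so $\phi$ survives restriction and yields a nonzero equivariant homomorphism), it gives that $\ad\bar{\rho}$ has no trivial $\bar{\rho}(G_{F^+(\zeta_l)})$-quotient (so the $\zeta_{l^N}$-triviality can be imposed independently of the $\phi$-condition), and it gives enough elements with prescribed eigenvalues to span $\ad\bar{\rho}$ by their relevant eigenspaces; this is the content of Lemma~2.5 and Proposition~4.4 of \cite{thorne}, which I would invoke essentially verbatim.

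Iterating this step at most $\dim_{\F}H^1_{\mathcal{L}_{\mathcal{S}}^{\bot}}(G_{F^+,T},\ad\bar{\rho}(1))\le h$ times produces a Taylor--Wiles set $Q'$ with $|Q'|\le h$ and vanishing dual Selmer group. I would then pad $Q'$ up to size exactly $h$ by adjoining $h-|Q'|$ further primes, each chosen by Chebotarev to split in $F$, to satisfy $l^N\mid\textrm{Nm}_{F^+}(v)-1$, and to have $\bar{\rho}|_{G_v}$ of allowable Taylor--Wiles type --- but with no condition on any cohomology class, since adding an allowable Taylor--Wiles prime to a deformation problem whose dual Selmer group is already zero leaves it zero. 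This yields the desired $Q_N$, and by the first paragraph $R^{\univ}_{\mathcal{S}(Q_N)}$ is then topologically generated over $R^{\loc}$ by $h-h_0$ elements.

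The main obstacle is the combined Chebotarev--adequacy step of the second paragraph: one must simultaneously enforce three conditions on $\Frob_v$ --- the prescribed eigenvalue structure making $v$ an allowable Taylor--Wiles prime, the congruence $l^N\mid\textrm{Nm}_{F^+}(v)-1$, and nonvanishing of $\mathrm{res}_v\phi$ --- and argue that they are jointly realizable by a single conjugacy class in the relevant Galois group. Verifying this compatibility is exactly where the adequacy of $\bar{\rho}(G_{F^+(\zeta_l)})$ is used in an essential way, rather than the more restrictive big-image hypotheses of earlier work, and it is the only point where anything beyond routine bookkeeping is required; the presentation bound, the Greenberg--Wiles dimension count, and the padding are all standard.
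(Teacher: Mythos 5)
Your proposal is correct and follows essentially the same route as the paper: the paper's proof simply cites Lemma 4.4 of \cite{thorne} (as applied in Theorem 8.6 there), and what you have written is an unpacking of exactly that argument --- the Greenberg--Wiles presentation bound, the Chebotarev/adequacy selection of primes killing the dual Selmer group, and padding to size $h$ --- which you yourself note amounts to invoking Thorne's Lemma 2.5 and Proposition 4.4 essentially verbatim.
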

\begin{proof}
    This follows from Lemma 4.4 of \cite{thorne} applied in the case of Theorem 8.6.
\end{proof}
In light of this proposition, set $R_{\infty}=R^{\loc}[[X_1,...,X_h]]$, set $R_N=R^{\univ}_{\mathcal{S}(Q_N)}$, and set $R_0=R^{\univ}_{\mathcal{S}}$ so that we have surjections $R_{\infty}\twoheadrightarrow R_N$ and $R_N\twoheadrightarrow R_0$.

We now define some important subgroups of $G_D(\A)$.
\begin{definition}
For $v\in Q_N$, suppose that $\bar{r}|_v=\bar{s}\oplus \bar{\psi}$ as before, with $\bar{\psi}$ a $d_v$-dimensional semisimple unramified representation with all Frobenius eigenvalues equal. 
We take the group $U_i(\tilde{v})$ to be the subgroup of $U_{v}\subseteq G_D(F^+_v)$ (identified with $\GL_n(F_{\tilde{v}})$ via the isomorphism $i_{\tilde{v}}$) of elements that take the form
$$\begin{pmatrix} \varpi_{\Tilde{v}} * & * \\ 0 & aI_{d_v} \end{pmatrix}$$
modulo $\tilde{v}$
with $a\equiv1\mod{\tilde{v}}$ when $i=1$, and arbitrary when $i=0$. 
Set $U_i(Q)=U^Q\times \prod_{v\in Q} U_i(\tilde{v})\subseteq G_D(\A)$.
\end{definition}
Let $\Delta_N$ be the maximal $l$-power quotient of $U_0(Q_N)/U_1(Q_N)\cong \prod_{v\in Q_N} k(\tilde{v})^{\times}$.
We may view $\Delta_N$ as the maximal $l$-quotient of $\prod_{v\in Q_N} k(\tilde{v})^{\times}\cong (\Z/l^N)^q$.
We claim there is an action of $\Delta_N$ on the ring $R^{\univ}_{\mathcal{S}(Q)}$. 
The map $\det\circ r^{\univ}_N:I_{F,\tilde{v}}\rightarrow (R^{\univ}_{\mathcal{S}(Q)})^{\times}$ given by the determinant of the universal deformation 
$r^{\univ}_N:=r^{\univ}_{\mathcal{S}(Q_N),\bar{\rho}}$ factors through the kernel of $(R^{\univ}_{\mathcal{S}(Q)})^{\times}\rightarrow \F^{\times}$
which is an abelian $l$-power group. By local class field theory, there is an isomorphism $I_{F^{\textrm{ab}},\tilde{v}}\rightarrow \Ocal_{F,\tilde{v}}^{\times}$, and the 
$l$-power quotient of this group is the $l$-power quotient of $k(\tilde{v})^{\times}$. We hence see that there is a map $\Delta_N\rightarrow (R^{\univ}_{\mathcal{S}(Q_N)})^{\times}$ 
and thus a ring map $\Lambda[\Delta_N]\rightarrow R^{\univ}_{\mathcal{S}(Q)}$, so that $R^{\univ}_{\mathcal{S}(Q_N)}$ inherits the structure of a finitely generated $\Lambda[\Delta_N]$-algebra. Notice that if $a_N$ is the augmentation ideal of $\Lambda[\Delta_N]$, 
then $R^{\univ}_{\mathcal{S}(Q_N)}/a_N$ is the ring of the universal deformation ring which parametrises Galois deformations of type $\mathcal{S}$. (These deformations are required to be unramified at places above $Q_N$.)
Note that $\Delta_N\cong (\Z/l^n\Z)^h$ by our choice of $Q_N$.

As in Chapter 4, we can construct the Hecke operators 
\[\T^{T\cup Q_N,\ord}\big(U_1(Q_N)(l^{\infty}),\Ocal\big)\]
and, through a map $\T^{T\cup Q_N,\ord}\big(U_1(Q_N)(l^{\infty}),\Ocal\big)\rightarrow \T^{T,\ord}(U(l^{\infty}),\Ocal)$, we can lift our choice of maximal ideal $\mathfrak{m}$ to a maximal ideal $\mathfrak{m}_N\subset \T^{T\cup Q_N,\ord}(U_1(Q_N)(l^{\infty}),\Ocal)$. Set $\T_{N,1}:=\T^{T\cup Q_N,\ord}(U_1(Q_N)(l^{\infty}),\Ocal)^{\wedge}_{\mathfrak{m}_N}$ as the $\mf_N$-adic completion.
As in Proposition \ref{LGC2}, we can construct a representation $r_{\mathfrak{m}_N}:G_{F^+}\rightarrow \mathcal{G}_n(\T_{N,1})$ which by the proof of Theorem 6.8 of \cite{thorne} gives us an $\mathcal{S}(Q_N)$-lifting of $\bar{\rho}$. Hence, we get a surjection $R^{\univ}_{\mathcal{S}(Q)}\twoheadrightarrow \T_{N,1}$ for each $N$. 

\subsection{Patching}

We now define a module $H_N$ over $\T^{T\cup Q_N,\ord}(U_1(Q_N)(l^{\infty}),\Ocal)_{m}$ for each set $Q_N$, and quote a patching theorem that will allow us to construct the patched `limit' module $H_{\infty}$, which we use to prove our local freeness result.

Define the space of automorphic forms $S^{\ord}(U_i(Q_N)(l^{\infty}),L/\Ocal)_m$ as before and set $H_0=S^{\ord}(U(l^{\infty}),L/\Ocal)_m^{\vee}$. In Proposition 5.9 of
\cite{thorne}, Thorne describes a projection $\Pr_v$ on $S^{\ord}(U_i(Q_N)(l^{\infty}),L/\Ocal)_m$ and, in Theorem 6.8, modules
$$H_{i,N}:=\prod_{v\in Q_N} \Pr_v [S^{\ord}(U_i(Q_N)(l^{\infty}),L/\Ocal)_m]^{\vee}$$ 
with the following properties:

\begin{prop}{\cite{thorne}}
\begin{enumerate}
    \item $H_{1,Q_N}$ is a free $\Lambda[\Delta_{Q_N}]$-module and restriction to $S^{\ord}(U_0(Q_N)(l^{\infty}),L/\Ocal)_m$ gives an isomorphism $H_{1,Q_N}/a_N\cong H_{0,Q_N}$.
    \item The map 
    $$(\prod_{v\in Q_N} \Pr_{\tilde{v}})^{\vee}:H_{0,Q_N}\rightarrow H_0$$ 
    is an isomorphism.
\end{enumerate}
\end{prop}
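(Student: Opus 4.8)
The plan is to transcribe the argument of \cite{thorne} (Proposition 5.9 and Theorem 6.8), replacing the fixed-weight space $S_\lambda(U,\Ocal/l^N)$ used there by our ordinary $l^\infty$-level space $S^{\ord}(U(l^\infty),L/\Ocal)_{\mathfrak m}$, and to supply the one extra ingredient this substitution requires: Hida-theoretic freeness over $\Lambda$. \textbf{Step 1 (freeness over $\Lambda[\Delta_N]$).} First I would show that the Pontryagin dual $S^{\ord}(U_1(Q_N)(l^\infty),L/\Ocal)_{\mathfrak m}^\vee$ is finite free over $\Lambda[\Delta_N]$. At finite level $b$ this combines two known freeness statements: freeness of $S^{\ord}(U_1(Q_N)(l^{b}),L/\Ocal)$ over $\Lambda/a_b$ (the Hida-theoretic statement already used in the body of the paper, i.e.\ Proposition 2.20 of \cite{geraghty_2018}, valid because $U$ is sufficiently small), together with freeness over $\Ocal[\Delta_N]$ coming from the fact that $\Delta_N$ is a quotient of $U_0(Q_N)/U_1(Q_N)\cong\prod_{v\in Q_N}k(\tilde v)^\times$ acting freely on the relevant double-coset set (again by sufficient smallness, exactly as in \cite{thorne}). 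Passing to the limit over $b$ on the compact duals, a Mittag--Leffler argument then yields freeness of the dual over $\Lambda[\Delta_N]$.

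\textbf{Step 2 (part 1).} The operators $\Pr_v$ for $v\in Q_N$ are built from Hecke operators at the places $v$, which are disjoint from $S_l$ and $S_D$; hence $\prod_{v\in Q_N}\Pr_v$ is an idempotent commuting with the ordinary projector $e$, with the diamond operators generating $\Lambda$, and with the $\Delta_N$-action. Therefore $H_{1,Q_N}$ is a direct summand of the finite free $\Lambda[\Delta_N]$-module of Step 1, hence projective; and since $\Lambda$ is local of residue characteristic $l$ and $\Delta_N$ is an $l$-group, the ring $\Lambda[\Delta_N]$ is local, so projective implies free. For the identification $H_{1,Q_N}/a_N\cong H_{0,Q_N}$ I would use Pontryagin duality: the inclusion of level-$U_0(Q_N)$ forms into level-$U_1(Q_N)$ forms dualises to a $\Delta_N$-equivariant surjection of compact modules commuting with $\prod_v\Pr_v$; since $S^{\ord}(U_1(Q_N)(l^\infty),L/\Ocal)_{\mathfrak m}^{\Delta_N}=S^{\ord}(U_0(Q_N)(l^\infty),L/\Ocal)_{\mathfrak m}$ and, for any discrete $\Delta_N$-module $M$, one has $M^\vee/a_N M^\vee\cong (M^{\Delta_N})^\vee$ (with $a_N\subset\Lambda[\Delta_N]$ the augmentation ideal), the quotient $H_{1,Q_N}/a_N$ is exactly $H_{0,Q_N}$, realised by this restriction map.

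\textbf{Step 3 (part 2).} This is the genuinely local step at the Taylor--Wiles primes, and it is taken over essentially verbatim from \cite{thorne} (Prop.\ 5.9 / Thm.\ 6.8). At each $v\in Q_N$ we have $l\mid N(v)-1$ and $\bar\rho|_{G_{F,\tilde v}}\cong\bar s_v\oplus\bar\psi_v$ with $\bar\psi_v$ the generalised $\bar\alpha_v$-eigenspace of dimension $d_v$, where $\bar\alpha_v$ is distinct from the Frobenius eigenvalues on $\bar s_v$; localising at $\mathfrak m$ forces the corresponding Hecke eigenvalue to be a unit, and $\Pr_v$ projects onto precisely the summand on which the level-raising map from $U_v$ to $U_0(\tilde v)$ becomes an isomorphism after localisation. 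Since this computation involves only a $\GL_n$-factor away from $l$ and from $S_D$, it is identical to the classical case; the only extra checks are that it is compatible with the ordinary projector $e$ at $S_l$ and survives the direct limit over the $l^\infty$-level, both of which are formal because everything at $v\in Q_N$ commutes with everything at $S_l$.

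\textbf{Main obstacle.} No single step is deep; the real work is the bookkeeping of commuting the three operations --- the ordinary projector $e$ at $S_l$, the Taylor--Wiles projectors $\Pr_v$ at $Q_N$, and the inverse/direct limits defining the $l^\infty$-level and the Pontryagin dual --- and verifying that Thorne's finite-level, fixed-weight assertions are preserved under these limits. The one substantive point, which distinguishes our setting from the classical one, is the passage in Step 1 from freeness over each $(\Lambda/a_b)[\Delta_N]$ to freeness over $\Lambda[\Delta_N]$ on the compact duals: this uses completeness of $\Lambda$ and the Mittag--Leffler property rather than a formal colimit, and is where the ``sufficiently small'' hypothesis on $U$ is really needed.
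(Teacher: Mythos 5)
Your outline is correct and amounts to the same route as the paper, which offers no independent argument but simply records these statements as subclaims of Theorem 6.8 of \cite{thorne}; note, however, that Thorne's result (building on Geraghty) is already formulated for the ordinary $l^\infty$-level spaces, so the Hida-theoretic passage you present as a new ingredient in Step 1 (freeness over $\Lambda/a_b$ at finite level plus a limit argument) is part of the cited source rather than something to be supplied. One small imprecision in Step 2: $S^{\ord}(U_1(Q_N)(l^{\infty}),L/\Ocal)_{\mathfrak m}^{\Delta_N}$ is not literally $S^{\ord}(U_0(Q_N)(l^{\infty}),L/\Ocal)_{\mathfrak m}$, since $\Delta_N$ is only the maximal $l$-power quotient of $U_0(Q_N)/U_1(Q_N)\cong\prod_{v\in Q_N}k(\tilde v)^{\times}$; you should first take invariants under the prime-to-$l$ kernel (exact on $L/\Ocal$-coefficients, hence harmless, and identified with an intermediate level by sufficient smallness), after which your duality identity $M^\vee/a_N M^\vee\cong (M^{\Delta_N})^\vee$ yields $H_{1,Q_N}/a_N\cong H_{0,Q_N}$ as claimed.
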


\begin{proof}
    This is a subclaim of Theorem 6.8 in \cite{thorne}.
\end{proof}

\begin{theorem}[Patching]

Let $R\twoheadrightarrow \T $ be a surjective $\Lambda$-algebra homomorphism with $\T$ a finite $\Lambda$-algebra. 
    Define $S_N=\Lambda [(\Z/l^n\Z)^h]\cong \Lambda[\Delta_{Q_N}]$ with augmentation ideal $\mathfrak{a}_N$ and define the inverse limit $ S'_{\infty}:= \varprojlim \Lambda[\Delta_{Q_N}]\cong \Lambda[[Y_1,...,Y_h]]$. 
    Set $S_{\infty}=S'_{\infty}\hotimes_{\Ocal}\mathcal{T}$, where $\mathcal{T}=\Ocal[[X_1,...,X_{|T|n^2}]]$.
Suppose we have the following data:
\begin{enumerate}
    \item Integers $t,h\geq 1$; 
    \item A finite $\T$-module H;

    \item For each $N\geq 1$ have 
    \begin{enumerate}
        \item $R_N\twoheadrightarrow \T_N$ are $S_N$-algebra homomorphisms, such that reduction modulo $\mathfrak{a}_N$ reduces the map to $R\twoheadrightarrow \T$; 
        \item a finite $\T_N$-module $H_N$, which is finite and free over $S_N$ and whose $S_N$-rank is independent of $N$; 
    \end{enumerate}
    \item An $S_{\infty}$-algebra $R_{\infty}$ such that $R_{\infty}\twoheadrightarrow R_N$ with kernel $\ker(S_{\infty}\rightarrow S_N)R_{\infty}$.
\end{enumerate}

Then there is an $R_{\infty}\otimes S_{\infty}$-module $H_{\infty}$, such that 
\begin{enumerate}
    \item $H_{\infty}/aH_{\infty}\cong H$;
    \item $H_{\infty}$ is a finite free $S_{\infty}$-module; 
    \item The action of $S_{\infty}$ on $H_{\infty}$ factors through that of $R_{\infty}$. 
\end{enumerate}
\end{theorem}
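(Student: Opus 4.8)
The hypotheses (1)--(4) are precisely a \emph{patching datum} in the sense of Taylor--Wiles--Kisin, and the plan is to run the standard patching construction; the form closest to what we need is carried out in \S6 of \cite{thorne} (compare \cite{CHT08}, \S3.3, and \cite{geraghty_2018}, \S3), so the argument is a recollection of that method. Write $\mf_\infty$ for the maximal ideal of the complete Noetherian local ring $S_\infty$, and for each $N$ put $\mathfrak{c}_N:=\ker(S_\infty\twoheadrightarrow S_N)$, so that $S_\infty/\mathfrak{c}_N\cong S_N$. First I would fix a cofinal descending chain of open ideals $\mathfrak{b}_1\supseteq\mathfrak{b}_2\supseteq\cdots$ of $S_\infty$ with each $S_\infty/\mathfrak{b}_M$ Artinian and $\bigcap_M\mathfrak{b}_M=0$ (e.g.\ $\mathfrak{b}_M=\mf_\infty^M$), and record that, because $\Delta_{Q_N}\cong(\Z/l^N\Z)^h$ has exponent $l^N\to\infty$, the kernels $\mathfrak{c}_N$ become arbitrarily small in the relevant adic sense as $N\to\infty$; this is what makes the patching possible.

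For each pair $(M,N)$ form the \emph{level-$M$ truncation at $N$}: the finite ring $R_N^{(M)}:=R_N\otimes_{S_\infty}S_\infty/\mathfrak{b}_M$, its quotient $\T_N^{(M)}$, and the finite module $H_N^{(M)}:=H_N\otimes_{S_\infty}S_\infty/\mathfrak{b}_M$, carrying the residual $S_\infty/\mathfrak{b}_M$- and $R_\infty/\mathfrak{b}_M$-structures together with the maps $R_\infty/\mathfrak{b}_M\twoheadrightarrow R_N^{(M)}\twoheadrightarrow\T_N^{(M)}$ and $R_N^{(M)}\to\End(H_N^{(M)})$. The key finiteness observation is that, for each fixed $M$, these truncated tuples fall into only finitely many isomorphism classes, each being a finite set carrying finitely much algebraic structure. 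Hence, by the usual diagonal (König's lemma / pigeonhole) argument, I can extract a subsequence $N_1<N_2<\cdots$ along which, for every $M$, the tuples $(R_{N_k}^{(M)},\T_{N_k}^{(M)},H_{N_k}^{(M)})$ are isomorphic for all large $k$, compatibly with the reductions from level $M+1$ to level $M$. Taking the inverse limit over $M$ of these stably isomorphic truncations produces an $S_\infty$-algebra quotient $R_\infty\twoheadrightarrow\T_\infty$, a finite $S_\infty$-module $H_\infty:=\varprojlim_M H^{(M)}$, and a factorisation of the $S_\infty$-action on $H_\infty$ through $\T_\infty$ (at each finite level the $S_N$-action on $H_N$ runs through $\T_N$, hence through $R_N$), which is conclusion (3).

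It remains to check (1) and (2). For (1): by hypothesis (3)(a), reduction modulo $\mathfrak{a}_N$ carries $R_N\twoheadrightarrow\T_N$ to $R\twoheadrightarrow\T$ and sends $H_N/\mathfrak{a}_N H_N$ to $H$; since $a:=\ker(S_\infty\twoheadrightarrow\Lambda)$ satisfies $aH_\infty\supseteq\mathfrak{c}_{N_k}H_\infty$ with $H_\infty/\mathfrak{c}_{N_k}H_\infty\cong H_{N_k}$ and the residual $\mathfrak{a}_{N_k}$ then cutting this down to $H$, we obtain $H_\infty/aH_\infty\cong H$ as $\T$-modules. For (2): $H_\infty$ is finite over $S_\infty$ with $H_\infty/\mathfrak{c}_{N_k}H_\infty\cong H_{N_k}$ free over $S_{N_k}=S_\infty/\mathfrak{c}_{N_k}$ of rank $r$ independent of $k$; choosing a \emph{minimal} presentation $0\to K\to S_\infty^{\,r}\to H_\infty\to 0$ (so $r$ is the minimal number of generators and $K\subseteq\mf_\infty S_\infty^{\,r}$) and reducing modulo $\mathfrak{c}_{N_k}$, the freeness and the constancy of the rank force the image of $K$ in $(S_\infty/\mathfrak{c}_{N_k})^{\,r}$ to vanish, i.e.\ $K\subseteq\mathfrak{c}_{N_k}S_\infty^{\,r}$ for every $k$, whence $K=0$ once the $\mathfrak{c}_{N_k}$ intersect to $0$ (after absorbing the framing variables $\mathcal{T}=\Ocal[[X_1,\dots,X_{|T|n^2}]]$ into $R_\infty$ and $H_\infty$ so that $H_\infty$ is already manifestly $\mathcal{T}$-free); equivalently one computes $\operatorname{depth}_{S_\infty}H_\infty=\dim S_\infty$ from $H_\infty/(\text{a regular sequence})H_\infty\cong H/\mf_\Lambda H\neq 0$ and invokes Auslander--Buchsbaum over the regular local ring $S_\infty$. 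Either way $H_\infty\cong S_\infty^{\,r}$ is finite free, giving (2).

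The one place requiring genuine care — and the expected main obstacle — is the interplay of the auxiliary power series variables $\mathcal{T}$ with the truncation step: one must set up the patching so that the extracted $H_\infty$ is free over \emph{all} of $S_\infty$ and not merely over the Taylor--Wiles part $S'_\infty$, and must keep the transition maps in the diagonal argument strictly compatible across levels $M$. The rest is routine bookkeeping of the by-now-standard kind.
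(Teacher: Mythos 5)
Your proposal is correct and is essentially the same argument as the paper's, whose ``proof'' simply defers to the standard Taylor--Wiles--Kisin patching in Geraghty (Ch.~4.3) and Thorne (Ch.~8): truncate modulo a cofinal family of open ideals of $S_\infty$, extract a compatible diagonal subsequence by pigeonhole, pass to the inverse limit, and deduce freeness from the constancy of the $S_N$-rank. Two bookkeeping points in your write-up need the standard repair. First, $R_N\otimes_{S_\infty}S_\infty/\mathfrak{b}_M$ need not be a finite set, since $R_N$ is not known to be finite over $S_\infty$; the pigeonhole should therefore be applied to the finite data consisting of $H_N/\mathfrak{b}_M H_N$ together with the continuous homomorphism $R_\infty\rightarrow\End(H_N/\mathfrak{b}_M H_N)$ (there are only finitely many such, as $R_\infty$ is a complete Noetherian local $\Ocal$-algebra with finite residue field, hence topologically finitely generated), or one must additionally quotient $R_N$ by a power of its maximal ideal; the quotient $\T_\infty$ is then recovered as the image of $R_\infty$ in $\End(H_\infty)$, which suffices for conclusion (3). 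Second, the construction only identifies the truncations $H_\infty/\mathfrak{b}_M H_\infty\cong H_{N_k}/\mathfrak{b}_M H_{N_k}$ along the diagonal; it does not literally give $H_\infty/\mathfrak{c}_{N_k}H_\infty\cong H_{N_k}$, so the freeness and reduction statements should be checked level by level in $M$ (using that, once the framing variables are absorbed into the modules as you indicate, $\ker(S_\infty\rightarrow S_{N}\hotimes_{\Ocal}\mathcal{T})\subseteq\mathfrak{b}_M$ for $N$ large) and then passed to the limit; your minimal-presentation or depth argument goes through verbatim after this adjustment. With these routine modifications your outline coincides with the argument the paper cites.
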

\begin{proof}
The details of the Taylor-Wiles-Kisin patching method used here are no different to chapter 4.3 of \cite{geraghty_2018}. One can also find details in chapter 8 of \cite{thorne} under the heading `another patching argument'.
\end{proof}

\begin{theorem}\label{theorem locfree}
The module $H_0[1/l]$ is a finite locally free $R^{univ}_{\mathcal{S}}[1/l]$-module.
\end{theorem}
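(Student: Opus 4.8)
The plan is to obtain this from the Patching Theorem above together with the regularity of $R^{\loc}[1/l]$ recorded in this section (which ultimately rests on Theorem~\ref{theorem-smooth}), by the standard Taylor--Wiles--Kisin endgame. We may assume $H_0=S^{\ord}(U(l^{\infty}),L/\Ocal)_{\mathfrak m}^{\vee}\neq 0$, the statement being vacuous otherwise. First I would assemble the patching data: take $\T=\T^{T,\ord}(U(l^{\infty}),\Ocal)_{\mathfrak m}$, $H=H_0$, the modules $H_N=H_{1,Q_N}$ (which are finite free over $S_N=\Lambda[\Delta_{Q_N}]$ of rank independent of $N$, since $H_{1,Q_N}/\mathfrak a_N\cong H_{0,Q_N}\cong H_0$), the rings $R_N=R^{\univ}_{\mathcal S(Q_N)}\twoheadrightarrow\T_{N,1}$, and $R_\infty=R^{\loc}[[X_1,\dots,X_h]]$ and $S_\infty$ as defined above, the surjections $R_\infty\twoheadrightarrow R_N$ coming from the topological generation of $R^{\univ}_{\mathcal S(Q)}$ over $R^{\loc}$. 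Applying the Patching Theorem then produces a module $H_\infty$ that is finite free over $S_\infty$ and finite over $R_\infty$, with the $S_\infty$-action factoring through $R_\infty$, together with isomorphisms $R_\infty/aR_\infty\cong R^{\univ}_{\mathcal S}$ and $H_\infty/aH_\infty\cong H_0$ of $R^{\univ}_{\mathcal S}$-modules, where $a\subseteq S_\infty$ is the patching ideal.

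Next I would invert $l$. Since $\Lambda=\Ocal[[T_n(l)]]$ is a power series ring over $\Ocal$, so is $S_\infty$; hence $S_\infty[1/l]$ is regular and $H_\infty[1/l]$, being finite free over it, is a maximal Cohen--Macaulay $S_\infty[1/l]$-module. On the other side, $R^{\loc}[1/l]$ is regular by Corollary~\ref{corol 45}, the inputs being the regularity of $R^{\triangle}_v[1/l]$ for $v\in S_l$ (Lemma~\ref{lemma Sl}), of $R^{\square,\tilde r}_v[1/l]$ for $v\in S_D$ (Proposition~\ref{prop-stacks}, which in turn invokes Theorem~\ref{theorem-smooth} for the Steinberg component), and of $R^{\square}_v[1/l]$ for $v\in R$ (Proposition~3.6 of \cite{Jackshottonduke}); consequently $R_\infty[1/l]=(R^{\loc}[[X_1,\dots,X_h]])[1/l]$ is regular as well, again by Corollary~\ref{corol 45}. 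I would also record the numerical coincidence $\dim R_\infty=\dim S_\infty$, obtained by matching $\dim S_\infty$ against the explicit dimensions of $R^{\triangle}_v$, $R^{\square,\tilde r}_v$ and $R^{\square}_v$; this equality is precisely what the choice of Taylor--Wiles level and of the framing variables was designed to produce.

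With this in place the argument is formal. Because $H_\infty$ is finite over $R_\infty$ and finite free of positive rank over $S_\infty$, the inclusion $R_\infty/\mathrm{Ann}_{R_\infty}(H_\infty)\hookrightarrow\End_{S_\infty}(H_\infty)$ exhibits $R_\infty/\mathrm{Ann}_{R_\infty}(H_\infty)$ as a finite extension of $S_\infty$, whence $\dim_{R_\infty}H_\infty=\dim S_\infty=\dim R_\infty$ and $H_\infty$ has full support in $\Spec R_\infty$. Since the $S_\infty$-action factors through $R_\infty$, a maximal $H_\infty$-regular sequence in $\mathfrak m_{S_\infty}$ (of length $\dim S_\infty$) remains $H_\infty$-regular over $R_\infty$, so $\depth_{R_\infty}H_\infty=\dim S_\infty=\dim R_\infty$ and $H_\infty$ is maximal Cohen--Macaulay over $R_\infty$. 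Localising $H_\infty[1/l]$ at any maximal ideal of the regular ring $R_\infty[1/l]$, the depth of the localisation equals the dimension of the corresponding local ring, so the Auslander--Buchsbaum formula gives projective dimension $0$; hence $H_\infty[1/l]$ is finite locally free over $R_\infty[1/l]$. Base changing along the surjection $R_\infty[1/l]\twoheadrightarrow R_\infty[1/l]/aR_\infty[1/l]\cong R^{\univ}_{\mathcal S}[1/l]$ and using $H_\infty/aH_\infty\cong H_0$, we conclude that $H_0[1/l]\cong H_\infty[1/l]\otimes_{R_\infty[1/l]}R^{\univ}_{\mathcal S}[1/l]$ is finite locally free over $R^{\univ}_{\mathcal S}[1/l]$, as required.

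I expect the main obstacle to lie not in this endgame, which is routine once set up, but in the regularity of $R^{\loc}[1/l]$ --- more precisely of its Steinberg and $\tilde r$-discrete series local factors --- which is exactly the payoff of the smoothness theorem of \S3 (Theorem~\ref{theorem-smooth}), together with Proposition~\ref{prop-stacks}. The remaining points requiring care are the verification of the hypotheses of the Patching Theorem (the freeness of $H_{1,Q_N}$ over $S_N$ with $N$-independent rank, and the compatible surjections $R_\infty\twoheadrightarrow R_N$) and the dimension bookkeeping $\dim R_\infty=\dim S_\infty$, both of which proceed exactly as in \cite{geraghty_2018} and \cite{thorne}.
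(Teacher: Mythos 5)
Your proposal is correct and follows essentially the same route as the paper: assemble the patching data, deduce that $H_\infty$ is maximal Cohen--Macaulay over $R_\infty$ by comparing $\dim S_\infty$ and $\dim R_\infty$ through depth, use regularity of $R_\infty[1/l]$ (Corollary~\ref{corol 45}/Lemma~\ref{lemma-completion}, resting on Lemma~\ref{lemma Sl}, Proposition~\ref{prop-stacks} and Theorem~\ref{theorem-smooth}) together with Auslander--Buchsbaum to get local freeness of $H_\infty[1/l]$, and then reduce modulo the patching ideal. The only (harmless) divergence is that you assert $\dim R_\infty=\dim S_\infty$ outright, whereas the paper a priori only has $\dim R_\infty\le\dim S_\infty$ (the defect being the parity term in $h_0$ involving $\mu_{\mathfrak m}$) and extracts the equality, and with it $\mu_{\mathfrak m}\equiv n\ (\mathrm{mod}\ 2)$, from the chain $\dim S_\infty=\depth_{S_\infty}(H_\infty)\le\depth_{R_\infty}(H_\infty)\le\dim R_\infty$.
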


\begin{proof}
We calculate that 
\begin{align*}
    \dim(S_{\infty})&=\dim(\Lambda)+h+|T|n^2\\
    &=n[F^+:\Q]n+h+|T|n^2,
\end{align*}
and that 
\begin{align*}
    \dim(R_{\infty})&=1+\sum_{v\in S_l}\left([F_{\tilde{v}}:\Q_l]\frac{n(n+1)}{2} +n^2\right) + n^2|S_D\cup R|+ h-h_0 \\
    &=[F^+:\Q]\frac{n(n+1)}{2} +|T|n^2 +h-h_0 \\
    &=[F^+:\Q]n +|T|n^2 +h-[F^+:\Q]\frac{n(1-(-1)^{\mu_{\mathfrak{m}}-n})}{2}
\end{align*}

Consider the module $H^{\square}_{\infty}$. Since $H^{\square}_{\infty}$ is a finite free $S_{\infty}$ module, and since the action of ${S_{\infty}}$ factors through $R_{\infty}$, we see that 
\begin{equation*}
    \dim(S_{\infty})=\depth_{S_{\infty}}(H^{\square}_{\infty})\leq \depth_{R_{\infty}}(H^{\square}_{\infty})\leq \dim(R_{\infty})
\end{equation*}
and thus, the only possible way for this inequality to hold is if equality holds throughout. This implies $\mu_m\equiv n \mod 2$ and $H^{\square}_{\infty}$ is a maximal Cohen-Macaulay $R_{\infty}$ module. 

Now, consider the generic fibre. Let $m\subseteq R_{\infty}[1/l]$ be a maximal ideal. Lemma \ref{lem: Rloc} shows that $R_{\infty}[1/l]_m$ is a regular local ring.
Thus, any finitely generated maximal Cohen-Macaulay $R_{\infty}[1/l]_m$-module has finite projective dimension, and hence any maximal Cohen-Macaulay module is projective by the Auslander-Buchsbaum formula. 
This shows that $H^{\square}_{\infty}[1/l]_m$ is a free $R_{\infty}[1/l]_m$-module, this shows that $H^{\square}_{\infty}[1/l]$ is a locally finite free $R_{\infty}[1/l]$-module. It follows that $H_0[1/l]$ is a locally finite free $R^{univ}_{\mathcal{S}}[1/l]$-module. 
\end{proof}
\begin{corollary}
$R^{univ}_{\mathcal{S}}[1/l]=\T[1/l]$. 
\end{corollary}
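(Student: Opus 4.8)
The plan is to derive this from Theorem~\ref{theorem locfree} by the usual comparison of the deformation ring with the Hecke algebra. Write $R_0=R^{univ}_{\mathcal{S}}$, and let $\T$ be the Hecke algebra acting on $H_0=S^{\ord}(U(l^{\infty}),L/\Ocal)_m^{\vee}$; recall from Section~5 the surjection $R_0\twoheadrightarrow\T$ (the trivial Taylor--Wiles case of the maps $R^{\univ}_{\mathcal{S}(Q)}\twoheadrightarrow\T_{N,1}$), through which $R_0$ acts on $H_0$. Since $\T$ is by construction a subalgebra of $\End_{\Ocal}(S^{\ord}(U(l^{\infty}),L/\Ocal)_m)$, it acts faithfully on $H_0$, so the surjection factors as $R_0\twoheadrightarrow\T\hookrightarrow\End_{\Ocal}(H_0)$, whence $\ker(R_0\to\T)=\mathrm{Ann}_{R_0}(H_0)$ and $\T\cong R_0/\mathrm{Ann}_{R_0}(H_0)$. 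As $H_0$ is a finite $R_0$-module (being finite over $\T$), inverting $l$ gives $\T[1/l]\cong R_0[1/l]/\mathrm{Ann}_{R_0[1/l]}(H_0[1/l])$. Thus it suffices to show that $H_0[1/l]$ is a \emph{faithful} $R_0[1/l]$-module.

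By Theorem~\ref{theorem locfree}, $H_0[1/l]$ is finite locally free over $R_0[1/l]$; hence its rank is a locally constant function on $\Spec R_0[1/l]$, and $\mathrm{Ann}_{R_0[1/l]}(H_0[1/l])$ is the idempotent ideal cutting out the union of the connected components on which this rank is $0$. So faithfulness is equivalent to the rank being strictly positive on \emph{every} connected component, i.e.\ to $\Supp_{R_0[1/l]}H_0[1/l]=\Spec R_0[1/l]$. Since ordinary automorphic forms exist we have $H_0[1/l]\ne 0$, so the rank is positive on at least one component; the real task is to propagate positivity to all of them, and for that I would pass back to the patched module.

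Recall from the proof of Theorem~\ref{theorem locfree} that the chain $\dim S_\infty=\depth_{S_\infty}H^{\square}_\infty\le\depth_{R_\infty}H^{\square}_\infty\le\dim_{R_\infty}H^{\square}_\infty\le\dim R_\infty=\dim S_\infty$ forces $H^{\square}_\infty$ to be a maximal Cohen--Macaulay $R_\infty$-module of dimension $\dim R_\infty$. Now $R_\infty=R^{\loc}[[X_1,\dots,X_h]]$ is $\Ocal$-flat and equidimensional, since each local factor $R^{\triangle}_v$, $R^{\square,\tilde{r}}_v$, $R^{\square}_v$ is $\Ocal$-flat and equidimensional of the dimension used in the theorem and $\hotimes_{\Ocal}$ together with adjoining power series variables preserves this; and $H^{\square}_\infty\ne 0$. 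A nonzero maximal Cohen--Macaulay module of top dimension over an equidimensional ring has support a union of top-dimensional irreducible components, and combining this with $\Ocal$-flatness and the irreducibility of the special fibres $\Spec(R_v/l)$ of the local deformation rings (which makes $\Spec(R_\infty/l)$ irreducible) one concludes $\Supp_{R_\infty}H^{\square}_\infty=\Spec R_\infty$, i.e.\ $H^{\square}_\infty$ is faithful over $R_\infty$. Since $H_0=H^{\square}_\infty/\mathfrak{a}H^{\square}_\infty$ and $R_0=R_\infty/\mathfrak{a}R_\infty$ for the augmentation ideal $\mathfrak{a}\subseteq S_\infty$, this yields $\Supp_{R_0}H_0=\Spec R_0$, so $\mathrm{Ann}_{R_0}(H_0)$ is nilpotent; using that $R_0[1/l]$ is reduced (equivalently, reducedness of $\T[1/l]$, which is standard in the characteristic-zero automorphic setting) we get $\mathrm{Ann}_{R_0[1/l]}(H_0[1/l])=0$, and therefore $R^{univ}_{\mathcal{S}}[1/l]\cong\T[1/l]$. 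The main obstacle is exactly this faithfulness step: it needs the equidimensionality and special-fibre irreducibility of the local deformation rings $R^{\triangle}_v$, $R^{\square,\tilde{r}}_v$, $R^{\square}_v$ to upgrade ``maximal Cohen--Macaulay'' to ``faithful'', together with reducedness of $R^{univ}_{\mathcal{S}}[1/l]$; the rest of the argument is formal.
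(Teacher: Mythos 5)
Your first two paragraphs reproduce, in substance, the paper's own argument: the paper takes the kernel $I$ of $R^{univ}_{\mathcal{S}}[1/l]\twoheadrightarrow\T[1/l]$, localises at an arbitrary maximal ideal $\mathfrak{m}$, notes that $I_{\mathfrak{m}}$ annihilates $H_0[1/l]_{\mathfrak{m}}$ because the action factors through $\T[1/l]_{\mathfrak{m}}$, and concludes $I_{\mathfrak{m}}=0$ from the freeness supplied by Theorem \ref{theorem locfree}, whence $\Supp(I)=\emptyset$ and $I=0$. You have correctly put your finger on the one implicit point in that argument, namely that annihilating a free module only forces $I_{\mathfrak{m}}=0$ when the free module is nonzero, i.e.\ that the rank of $H_0[1/l]$ must be positive at every maximal ideal (equivalently on every connected component). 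The paper's proof does not discuss this and simply uses local freeness plus the factorisation through $\T$.

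The gap in your proposal is the way you try to establish that positivity. Your last paragraph relies on inputs the paper neither proves nor cites and which are not available from its results: $\Ocal$-flatness and equidimensionality of each of $R^{\triangle}_v$, $R^{\square,\tilde{r}_{\tilde{v}}}_v$, $R^{\square}_v$ in the stated dimensions; irreducibility of their special fibres $\Spec(R_v/l)$; the inference that the completed tensor product $R^{\loc}$ then has irreducible special fibre (irreducibility is not preserved under $\hotimes$ without a geometric irreducibility hypothesis); and reducedness of $R^{univ}_{\mathcal{S}}[1/l]$. In particular, every regularity statement in the paper --- Theorem \ref{theorem-smooth} as applied to $R^{\square,\st}_v$, Proposition \ref{prop-stacks}, Lemma \ref{lemma Sl}, and Shotton's result at $v\in R$ --- concerns only the generic fibres $R_v[1/l]$; nothing is asserted about $R_v/l$, and for the Steinberg and $\tilde{r}$-discrete-series rings the relevant component of $S_{\GL_n}$ need not be smooth or irreducible in residue characteristic $l$ (considerateness of $q_v$ can fail mod $l$). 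Moreover, even granting that $H^{\square}_{\infty}$ is maximal Cohen--Macaulay, ``support is a union of irreducible components'' does not by itself give \emph{all} components: the paper's closing remark notes that the minimal primes of $R_{\infty}[1/l]$ biject with those of $\Lambda$, so there genuinely are several components to account for, and your argument supplies no mechanism forcing the patched module to live on each of them. So the faithfulness step, as you have written it, cannot be completed from the paper's resources; flagging the rank-positivity subtlety is legitimate, but replacing the paper's three-line localisation argument with these unproven structural claims is not a proof.
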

\begin{proof}

Let $I$ be the kernel of the surjection $R^{univ}_{\mathcal{S}}[1/l]\rightarrow \T[1/l]$. Choose any maximal ideal $m$ of $R^{univ}_{\mathcal{S}}[1/l]$. Since localisation is an exact functor, we get a short exact sequence 
\[0\rightarrow I_m \rightarrow R^{univ}_{\mathcal{S}}[1/l]_m\rightarrow \T[1/l]_m\rightarrow 0.\]
Note that the action of $R^{univ}_{\mathcal{S}}[1/l]_m$ on $H_0[1/l]_m$ factors through $\T[1/l]_m$, so that $I_m$ annihilates all of $H_0[1/l]_m$. Since this is a free module, this shows that $I_m$ is trivial. Since this is true for every $m$, this shows that $\Supp(I)=\emptyset$ and hence $I=0$. 
Hence the surjection above is an isomorphism $R^{univ}_{\mathcal{S}}[1/l]\cong\T[1/l]$.

\end{proof}

\begin{remark}
    We finally want to remark on an application of Theorem \ref{theorem locfree}. Whenever $M$ is a locally free coherent sheaf on a connected space $X$, the rank function
    \begin{align*}
        X&\rightarrow \N\cup\{0\} \\
        x&\mapsto \textrm{Rank}_x(M)
    \end{align*}
    is locally constant. Therefore, the rank of a geometrically connected component can be calculated by calculating the rank at any special point $x\in X$. 
    In our special case, the rank of the module $H_0[1/l]$ can be interpreted as the number of distinct automorphic forms with a given set of Hecke eigenvalues, which can be interpreted as the multiplicity of the Galois representation determined by said Hecke eigenvalues inside the space of automorphic forms. 
    We have shown that for these automorphic forms, the multiplicity is determined only by the connected component of $R_{\infty}[1/l]$ on which the representation $\rho_{\mathfrak{m}}$ lies. By Lemma 4.2 of \cite{geraghty_2018}, we see that the minimal primes of $R_{\infty}[1/l]$ biject with the minimal primes of $\Lambda$.
    Thus, if one could show that for each component of $\Spec \Lambda$, there is an automorphic form of some classical weight had multiplicity 1, then all the Hida families of forms would also have multiplicity 1.     
\end{remark}

\bibliographystyle{alpha}
\bibliography{References}

\begin{thebibliography}{DHKM20}

\bibitem[BDP17]{Balaji_2017}
V.~Balaji, P.~Deligne, and A.~J. Parameswaran.
\newblock On complete reducibility in characteristic {$p$}.
\newblock {\em \'{E}pijournal G\'{e}om. Alg\'{e}brique}, 1:Art. 3, 27, 2017.
\newblock With an appendix by Zhiwei Yun.

\bibitem[Bel16]{bellovin_2016}
Rebecca Bellovin.
\newblock Generic smoothness for {$G$}-valued potentially semi-stable
  deformation rings.
\newblock {\em Ann. Inst. Fourier (Grenoble)}, 66(6):2565--2620, 2016.

\bibitem[BG19]{BG19}
Rebecca Bellovin and Toby Gee.
\newblock {$G$}-valued local deformation rings and global lifts.
\newblock {\em Algebra Number Theory}, 13(2):333--378, 2019.

\bibitem[Car93]{Carter85(93)}
Roger~W. Carter.
\newblock {\em Finite groups of {L}ie type}.
\newblock Wiley Classics Library. John Wiley \& Sons, Ltd., Chichester, 1993.
\newblock Conjugacy classes and complex characters, Reprint of the 1985
  original, A Wiley-Interscience Publication.

\bibitem[CHT08]{CHT08}
Laurent Clozel, Michael Harris, and Richard Taylor.
\newblock Automorphy for some $l$-adic lifts of automorphic mod $l$ {{G}alois}
  representations.
\newblock {\em Publ. Math. Inst. Hautes Études Sci.}, 108:1--181, 2008.
\newblock With Appendix A, summarizing unpublished work of Russ Mann, and
  Appendix B by Marie-France Vignéras.

\bibitem[CM93a]{ColMcG93}
David~H. Collingwood and William~M. McGovern.
\newblock {\em Nilpotent orbits in semisimple {L}ie algebras}.
\newblock Van Nostrand Reinhold Mathematics Series. Van Nostrand Reinhold Co.,
  New York, 1993.

\bibitem[CM93b]{MR1251060}
David~H. Collingwood and William~M. McGovern.
\newblock {\em Nilpotent orbits in semisimple {L}ie algebras}.
\newblock Van Nostrand Reinhold Mathematics Series. Van Nostrand Reinhold Co.,
  New York, 1993.

\bibitem[Cot22]{Cotner}
Sean Cotner.
\newblock Springer isomorphisms over a general base scheme.
\newblock \url{https://arxiv.org/abs/2211.08383}, 2022.

\bibitem[CSS97]{CSS97}
Gary Cornell, Joseph~H. Silverman, and Glenn Stevens, editors.
\newblock {\em Modular forms and {F}ermat's last theorem}.
\newblock Springer-Verlag, New York, 1997.
\newblock Papers from the Instructional Conference on Number Theory and
  Arithmetic Geometry held at Boston University, Boston, MA, August 9--18,
  1995.

\bibitem[DHKM20]{DHKM20}
Jean-François Dat, David Helm, Robert Kurinczuk, and Gilbert Moss.
\newblock Moduli of langlands parameters.
\newblock \url{https://arxiv.org/abs/2009.06708}, 2020.

\bibitem[FR08]{FowRoh08}
Russell Fowler and Gerhard R\"{o}hrle.
\newblock On cocharacters associated to nilpotent elements of reductive groups.
\newblock {\em Nagoya Math. J.}, 190:105--128, 2008.

\bibitem[FS21]{FS21}
Laurent Fargues and Peter Scholze.
\newblock Geometrization of the local langlands correspondence.
\newblock \url{https://arxiv.org/abs/2102.13459}, 2021.

\bibitem[Ger19]{geraghty_2018}
David Geraghty.
\newblock Modularity lifting theorems for ordinary {G}alois representations.
\newblock {\em Math. Ann.}, 373(3-4):1341--1427, 2019.

\bibitem[Gro99]{gross_1999}
Benedict~H. Gross.
\newblock Algebraic modular forms.
\newblock {\em Israel J. Math.}, 113:61--93, 1999.

\bibitem[Har77]{Hartshorne}
Robin Hartshorne.
\newblock {\em Algebraic geometry}.
\newblock Graduate Texts in Mathematics, No. 52. Springer-Verlag, New
  York-Heidelberg, 1977.

\bibitem[Hel21]{Hellmann}
Eugen Hellmann.
\newblock On the derived category of the {I}wahori-{H}ecke algebra.
\newblock \url{https://arxiv.org/abs/2006.03013}, 2021.

\bibitem[HT01]{HT01}
Michael Harris and Richard Taylor.
\newblock {\em The geometry and cohomology of some simple {S}himura varieties},
  volume 151 of {\em Annals of Mathematics Studies}.
\newblock Princeton University Press, Princeton, NJ, 2001.
\newblock With an appendix by Vladimir G. Berkovich.

\bibitem[Hum90]{hump}
James~E. Humphreys.
\newblock {\em Reflection groups and {C}oxeter groups}, volume~29 of {\em
  Cambridge Studies in Advanced Mathematics}.
\newblock Cambridge University Press, Cambridge, 1990.

\bibitem[Jan03]{Jantzen}
Jens~Carsten Jantzen.
\newblock {\em Representations of algebraic groups}, volume 107 of {\em
  Mathematical Surveys and Monographs}.
\newblock American Mathematical Society, Providence, RI, second edition, 2003.

\bibitem[Mat86]{Matsumura}
Hideyuki Matsumura.
\newblock {\em Commutative ring theory}, volume~8 of {\em Cambridge Studies in
  Advanced Mathematics}.
\newblock Cambridge University Press, Cambridge, 1986.
\newblock Translated from the Japanese by M. Reid.

\bibitem[Pre03]{Premet03}
Alexander Premet.
\newblock Nilpotent orbits in good characteristic and the kempf-rousseau
  theory.
\newblock volume 260, pages 338--366. Journal of Algebra, 2003.
\newblock Special issue celebrating the 80th birthday of Robert Steinberg.

\bibitem[Sho18]{Jackshottonduke}
Jack Shotton.
\newblock The {B}reuil-{M}\'{e}zard conjecture when {$l\neq p$}.
\newblock {\em Duke Math. J.}, 167(4):603--678, 2018.

\bibitem[Sho24]{Jackcomponents}
Jack Shotton.
\newblock Irreducible components of the moduli space of {L}anglands parameters.
\newblock {\em Int. Math. Res. Not. IMRN}, (11):9020--9035, 2024.

\bibitem[{Sta}23]{stacks-project}
The {Stacks project authors}.
\newblock The stacks project.
\newblock \url{https://stacks.math.columbia.edu}, 2023.

\bibitem[Ste16]{ChevalleySteinberg}
Robert Steinberg.
\newblock {\em Lectures on {C}hevalley groups}, volume~66 of {\em University
  Lecture Series}.
\newblock American Mathematical Society, Providence, RI, corrected edition,
  2016.
\newblock Notes prepared by John Faulkner and Robert Wilson, With a foreword by
  Robert R. Snapp.

\bibitem[Tho12]{thorne}
Jack Thorne.
\newblock On the automorphy of {$l$}-adic {G}alois representations with small
  residual image.
\newblock {\em J. Inst. Math. Jussieu}, 11(4):855--920, 2012.
\newblock With an appendix by Robert Guralnick, Florian Herzig, Richard Taylor
  and Thorne.

\bibitem[Zhu20]{Zhu20}
Xinwen Zhu.
\newblock Coherent sheaves on the stack of langlands parameters.
\newblock \url{https://arxiv.org/abs/2008.02998}, 2020.

\end{thebibliography}

\end{document}